 \newtheorem{theorem}{Theorem}[section]
 \newtheorem{corollary}[theorem]{Corollary}
 \newtheorem{lemma}[theorem]{Lemma}
 \newtheorem{proposition}[theorem]{Proposition}
\newtheorem{definition}[theorem]{Definition}
\newtheorem{example}[theorem]{Example}
\newtheorem{fact*}{Fact}
\DeclareMathOperator{\diag}{diag}
\DeclareMathOperator\re{Re}
\DeclareMathOperator\spa{span}
\DeclareMathOperator\hol{Hol}
\DeclareMathOperator\aut{Aut}
\newcommand\half{{\tfrac 12}}
\newcommand\quart{\tfrac 14}
\newcommand\id{\mathrm{id}}
\newcommand{\eps}{\varepsilon}
\newcommand{\PP}{\mathcal{P}}
\newcommand{\T}{\mathbb{T}}
\newcommand{\D}{\mathbb{D}}
\newcommand{\C}{\mathbb{C}}
\newcommand{\mat}{\mathbb{C}^{2\times 2}}
\newcommand{\ccc}{\mathcal{P}}
\newcommand\G{\mathcal {G}}
\newcommand{\schur}{\mathcal{S}}
\newcommand{\R}{\mathbb{R}}
\newcommand\Aut{\mathrm{Aut}~}
\newcommand{\tr}{\operatorname{tr}}
\newcommand{\inv}{^{-1}}
\newcommand{\ph}{\varphi}
\newcommand\al{\alpha}
\newcommand\la{\lambda}
\newcommand\La{\Lambda}
\newcommand\up{\upsilon}
\newcommand\ups{\upsilon}
\newcommand\si{\sigma}
\newcommand\beq{\begin{equation}}
\newcommand\eeq{\end{equation}}
\newcommand\df{\stackrel{\rm def}{=}}
\newcommand\ii{\mathrm i}
\newcommand\blue{\color{black}}
\newcommand\black{\color{black}}
\newcommand\nn{\nonumber}
\newcommand\bbm{\begin{bmatrix}}
\newcommand\ebm{\end{bmatrix}}
\numberwithin{equation}{section}
\newcommand\B{\mathbb B}
\begin{document}
\title[A $\mu$-related domain]{The complex geometry of a domain related to $\mu$-synthesis}
\author{Jim Agler, Zinaida A. Lykova and N. J. Young}
\date{27th March 2014, Revised 2nd September 2014}

\begin{abstract} 
We establish the basic complex geometry and function theory of the {\em pentablock} $\mathcal{P}$, which is the bounded domain 
\[
\mathcal{P}= \{(a_{21}, \tr A, \det A): A= \bbm a_{ij}\ebm_{i,j=1}^2 \in \mathbb{B}\}
\]
where $\mathbb{B}$ denotes the open unit ball in the space  of $2\times 2$ complex matrices.  We prove several characterizations of the domain.   We show that $\mathcal{P}$ arises naturally in connection with a certain robust stabilization problem in control theory, 
 the problem of {\em $\mu$-synthesis}.  We describe the distinguished boundary of $\mathcal{P}$ and exhibit a $4$-parameter group of automorphisms of $\mathcal{P}$.     We demonstrate connections between the function theories of $\mathcal{P}$ and $\mathbb{B}$.  We show that $\mathcal{P}$ is polynomially convex and starlike, and we show that the real pentablock $\mathcal{P}\cap \R^3$ is a convex set bounded by five faces, three of them flat and two curved.

\end{abstract}

\subjclass[2010]{Primary  32F45, 30E05, 93B36, 93B50}
\keywords{mu synthesis; structured singular value; robust stabilization; symmetrised bidisc; automorphism; Schwarz Lemma; distinguished boundary; analytic lifting; pentablock}
\thanks{The first author was partially supported by National Science Foundation Grants on  Extending Hilbert Space Operators DMS 1068830 and DMS 1361720. The third author was partially supported by the UK Engineering and Physical Sciences Research Council grant  EP/K50340X/1. The collaboration was partially supported by London Mathematical Society Grant 41321.}
\maketitle

\section*{Contents} \label{contents}

\ref{intro}. Introduction \hfill  Page \pageref{intro}

\ref{Gamma}. The symmetrised bidisc and the pentablock  \hfill \pageref{Gamma}

\ref{instance}. An instance of $\mu$ and an associated domain \hfill \pageref{instance}

\ref{Linfrac}. A class of linear fractional functions \hfill\pageref{Linfrac}

\ref{twodomains}. The domains $\mathcal P$ and $\ccc_\mu$
 \hfill\pageref{twodomains}

\ref{elemGeom}. Elementary geometry of the pentablock  \hfill\pageref{elemGeom}

\ref{automorphisms}. Some automorphisms of $\PP$
\hfill\pageref{automorphisms}

\ref{disting_bound}. The distinguished boundary of $\PP$ \hfill\pageref{disting_bound}

\ref{real_pentablock}. The real pentablock $\ccc\cap \R^3 $ \hfill \pageref{real_pentablock}

\ref{schwarz_mu}. A Schwarz Lemma for a general $\mu$  \hfill\pageref{schwarz_mu}

\ref{schwarz}. What is the Schwarz Lemma for $\PP$? \hfill\pageref{schwarz}

\ref{lifting}. Analytic lifting \hfill\pageref{lifting}

\ref{conclud}. Conclusions \hfill\pageref{conclud}

 References \hfill\pageref{pentablock_bbl}

\section{Introduction}\label{intro}
In this paper  we establish the basic complex geometry and function theory of the domain
\beq\label{defPent}
\ccc= \{(a_{21}, \tr A, \det A): A= \bbm a_{ij}\ebm_{i,j=1}^2 \in \B\}
\eeq
where $\B$ denotes the open unit ball in the space $\C^{2\times 2}$ of $2\times 2$ complex matrices, with the usual operator norm. 
  We call this domain the {\em pentablock}.  The name alludes to the fact that $\ccc\cap\R^3$ is a convex body bounded by five faces, three of them flat and two curved (Theorem \ref{PcapR3}).   $\ccc$ is a holomorphic image of the Cartan domain $\B$.  It is polynomially convex and starlike about the origin, but neither circled nor convex.  The paper contains several characterizations of the domain,  and descriptions of its distinguished boundary and of a $4$-parameter group of automorphisms  and of connections with the function theory of $\B$. 

The domain $\ccc$ arises in connection with the {\em structured singular value}, a cost function on matrices introduced by control engineers in the context of robust stabilization with respect to modelling uncertainty \cite{Do}.  The structured singular value is denoted by $\mu$, and engineers have proposed an interpolation problem called the {\em $\mu$-synthesis problem} that arises from this source.  Attempts to solve cases of this interpolation problem have led to the study of two other domains, the {\em symmetrised bidisc} \cite{AY04} and the {\em tetrablock} \cite{awy}, in $\C^2$ and $\C^3$ respectively, which have turned out to have many properties of interest to specialists in several complex variables \cite{NiPfZw,EZ,EdKoZw10} and to operator theorists \cite{BPSR,Sarkar}.  The relationship between $\ccc$ and an instance of $\mu$ is explained in Section \ref{twodomains}, and there is a more thoroughgoing discussion in the Conclusions (Section \ref{conclud}).

We shall denote the open unit disc by $\D$, its closure by $\Delta$ and the unit circle by $\T$.  The polynomial map implicit in the definition \eqref{defPent} will be written
\beq\label{defpi}
\pi(A)=(a_{21}, \tr A, \det A) \quad \mbox{ where } A=\bbm a_{ij} \ebm_{i,j=1}^2\in\mat.
\eeq
Thus $\ccc= \pi(\B)$.  For the $\mu$ in question it transpires that $\mu(A)<1$ if and only if $\pi(A)\in\ccc$.   This statement is contained in Theorem \ref{4equiv}, one of the main results of the paper.   To illustrate the flavour of our results, here are foretastes of Theorem \ref{4equiv} and Theorem \ref{aut_PP}.
\begin{theorem}\label{foretaste}
Let
\[
(s,p)= (\la_1+\la_2,\la_1\la_2)
\]
where $\la_1,\la_2 \in\D$.  Let $a\in\C$ and let
\[
\beta=\frac{s-\bar sp}{1-|p|^2}.
\]
The following statements are equivalent.
\begin{enumerate}
\item $(a,s,p)\in\PP$;
\item there exists $A\in\mat$ such that $\mu(A) < 1$ and $\pi(A)=(a,s,p)$;
\item $|a|<\left|1- \frac{\half s\bar\beta}{1+\sqrt{1-|\beta|^2}}\right|$;
\item $|a| < \half|1-\bar\la_2\la_1|+\half (1-|\la_1|^2)^\half(1-|\la_2|^2)^\half$;
\item $\sup_{z\in\D} \left|\Psi_z(a,s,p)\right|  < 1$.
\end{enumerate}
\end{theorem}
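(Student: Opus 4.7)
The plan is to prove $(1)\Leftrightarrow(2)$ via the definition of the particular $\mu$, establish $(1)\Leftrightarrow(3)$ as the main technical step by direct optimisation on the fibres of $\pi$, verify $(3)\Leftrightarrow(4)$ algebraically, and obtain $(1)\Leftrightarrow(5)$ as a Schur--Agler-type realisation. The $\mu$ introduced in Section \ref{instance} is of the form $\mu(A)=\inf\{\|EAE\inv\|:E\in\mathcal{E}\}$ for a group $\mathcal{E}$ of similarities that leaves $\pi$ invariant, so $\{A:\mu(A)<1\}$ and $\B$ have the same $\pi$-image and $(1)\Leftrightarrow(2)$ is immediate.

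The heart of the argument is $(1)\Leftrightarrow(3)$. Fix $(s,p)\in\G$, so that $|\beta|<1$. For $a\ne 0$ every $A\in\pi\inv(a,s,p)$ has the form
\[
A(\alpha)=\bbm\alpha & (\alpha(s-\alpha)-p)/a\\ a & s-\alpha\ebm,\qquad \alpha\in\C,
\]
so $(a,s,p)\in\PP$ iff $\inf_{\alpha\in\C}\|A(\alpha)\|<1$. A Schur triangularisation $A=U\bbm\la_1 & t\\ 0 & \la_2\ebm U\ad$, combined with the explicit operator-norm formula for a $2\times 2$ triangular matrix, reduces this to an inequality in the hyperbolic invariant $\beta$ of $(s,p)$; solving that inequality produces (3), and running the construction in reverse furnishes an explicit $A\in\B$ whenever $|a|<\bigl|1-\tfrac{\half s\bar\beta}{1+\sqrt{1-|\beta|^2}}\bigr|$. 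The case $a=0$ is handled separately: $\diag(\la_1,\la_2)\in\B$ witnesses $(0,s,p)\in\PP$, and the right-hand side of (3) is strictly positive.

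The algebraic step $(3)\Leftrightarrow(4)$ rests on two identities
\[
\beta=\frac{\la_1(1-|\la_2|^2)+\la_2(1-|\la_1|^2)}{1-|\la_1\la_2|^2},\qquad \sqrt{1-|\beta|^2}=\frac{(1-|\la_1|^2)^{\half}(1-|\la_2|^2)^{\half}}{|1-\bar\la_2\la_1|},
\]
valid whenever $s=\la_1+\la_2$, $p=\la_1\la_2$ with $\la_1,\la_2\in\D$; substituting and rationalising transforms the bound in (3) directly into that in (4). Finally $(1)\Leftrightarrow(5)$ follows once the family $\{\Psi_z:z\in\D\}$ is identified as a defining family for $\PP$: $(1)\Rightarrow(5)$ because each $\Psi_z$ is a holomorphic contraction on $\PP$, while $(5)\Rightarrow(1)$ comes from extracting the condition in (3) out of the bound $\sup_z|\Psi_z(a,s,p)|<1$. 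The principal obstacle is the optimisation inside $(1)\Rightarrow(3)$: the specific appearance of the half-angle factor $(1+\sqrt{1-|\beta|^2})\inv$ is not visible without passing through the hyperbolic invariant $\beta$ of the symmetrised bidisc and recognising the extremal problem as a two-point Pick-type interpolation on $\D$.
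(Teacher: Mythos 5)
Your step ``$(1)\Leftrightarrow(2)$ is immediate'' assumes the hard part of the theorem. The claim that $\mu_E(A)=\inf\{\|DAD\inv\|\}$ over a similarity group preserving $\pi$ is the exactness of the $D$-scaling upper bound for this structure $E=\spa\{1,J\}$; only the inequality $\mu_E(A)\le\inf_D\|DAD\inv\|$ is elementary. In fact, since the unipotent part of that group sweeps out the whole fibre $\pi\inv(a,s,p)$ when $a\neq 0$, your scaling claim is essentially \emph{equivalent} to the statement $\PP_\mu=\PP$, which the paper describes as a major result and proves only through the chain $(2)\Leftrightarrow(5)\Rightarrow(3)\Rightarrow(4)\Rightarrow(1)$: Proposition \ref{mucriter} computes $\det(1-AX)$ for $X\in E$ to show $\mu_E(A)<1$ iff $(s,p)\in\G$ and $|a|\kappa(s,p)<1$; Proposition \ref{valsup} evaluates $\kappa(s,p)$ by a critical-point analysis; and Proposition \ref{asuffCondn} constructs explicit matrices in $\B$ realising every $|a|$ below the bound in (4). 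For the same reason your $(1)\Rightarrow(5)$, ``because each $\Psi_z$ is a holomorphic contraction on $\PP$,'' is circular: that $|\Psi_z|<1$ on $\PP$ is not visible from the definition of $\PP$ and is obtained in the paper only as a consequence of $(1)\Rightarrow(2)\Leftrightarrow(5)$; and $(5)\Rightarrow(3)$ requires the explicit evaluation of $\sup_{z\in\D}(1-|z|^2)/|1-sz+pz^2|$, which you do not perform.

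The central optimisation $(1)\Leftrightarrow(3)$, which you yourself identify as the principal obstacle, is not carried out: after Schur triangularisation one still faces a genuine extremal problem over the unitary conjugation and the off-diagonal Schur parameter, and the specific bound $\half|1-\bar\la_2\la_1|+\half(1-|\la_1|^2)^\half(1-|\la_2|^2)^\half$ has to come out of it; the paper does this by exhibiting matrices in two regimes ($c_-<|a|<c_+$ via a matrix with equal diagonal entries, and $|a|\le|w|$ via a matrix with entries involving $(|w|^2-|a|^2)^\half$) together with Lemma \ref{easy}. Moreover, the second identity you invoke for $(3)\Leftrightarrow(4)$ is false: the correct relation, equation \eqref{p-beta-la}, is $\sqrt{1-|\beta|^2}=|1-\bar\la_2\la_1|\,\La/(1-|\la_1\la_2|^2)$ with $\La=(1-|\la_1|^2)^\half(1-|\la_2|^2)^\half$, not $\La/|1-\bar\la_2\la_1|$; for instance $\la_1=\la_2=\half$ gives $\sqrt{1-|\beta|^2}=3/5$ whereas your formula gives $1$. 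So as written, both the reduction to (3) and the algebraic passage from (3) to (4) fail, and the proposal does not constitute a proof.
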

In this statement the cost function $\mu$ on $\mat$ is defined in Section \ref{instance}, and $\Psi_z$ is the linear fractional map
\[
\Psi_z(a,s,p)= \frac{a(1-|z|^2)}{1-sz+pz^2}.
\]
The significance of the equivalence of (1) and (2) is explained in the concluding section.

\begin{theorem}\label{foretaste2}
For every $\omega\in\T$  and every automorphism $\up$ of $\D$, the map
\beq \label{automP2}
f_{\omega \up}( a, \la_1+\la_2,\la_1\la_2)= 
\left( \frac{\omega \eta(1-|\alpha|^2) a}{1-  \bar\alpha (\la_1+\la_2)+ \bar{\alpha}^2 \la_1\la_2}, \up(\la_1)+\up(\la_2), \up(\la_1)\up(\la_2) \right)
\eeq
is an automorphism of $\ccc$, where 
\[
\up(\la) = \eta \frac{\la-\alpha}{1-\bar\al \la}
\]
for some $\eta\in\T$ and $\al\in\D$.  The maps $\{f_{\omega \up}: \omega\in\T, \up\in\aut  \D\}$ comprise a group of automorphisms of $\ccc$.
\end{theorem}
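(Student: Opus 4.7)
The plan is to construct $f_{\omega\up}$ as the descent, under the polynomial map $\pi$ of (\ref{defpi}), of a suitable automorphism of the matrix ball $\B$. Since $\ccc=\pi(\B)$, it suffices to exhibit a $4$-parameter subgroup of $\aut\B$ whose action on $\B$ respects the fibres of $\pi$ and yields the formula (\ref{automP2}) on the quotient.

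For the M\"obius part, given $\up(\la)=\eta(\la-\al)/(1-\bar\al\la)\in\aut\D$, I would use the well-known automorphism
\[
\phi(A)=\eta(A-\al I)(I-\bar\al A)\inv
\]
of $\B$. Its eigenvalues (counted with multiplicity) are the $\up$-images of those of $A$, so $\tr\phi(A)=\up(\la_1)+\up(\la_2)$ and $\det\phi(A)=\up(\la_1)\up(\la_2)$. Expanding $(I-\bar\al A)\inv$ via the adjugate formula and simplifying, the $(2,1)$-entry collapses to
\[
\phi(A)_{21}=\frac{\eta(1-|\al|^2)\,a_{21}}{1-\bar\al\,\tr A+\bar\al^2\det A},
\]
the denominator $(1-\bar\al\la_1)(1-\bar\al\la_2)$ being nonzero for $A\in\B$ and $\al\in\D$. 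To introduce the factor $\omega\in\T$ I would compose with a diagonal unitary conjugation: pick $\mu\in\T$ with $\mu^2=\omega$, set $U=\diag(\bar\mu,\mu)$, and define
\[
F_{\omega\up}(A)\df U\phi(A)U\ad .
\]
This map lies in $\aut\B$, fixes trace and determinant, and multiplies the $(2,1)$-entry by $\omega$; the choice of square root is immaterial since $\mu$ enters only through $\mu^2$.

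Combining the two steps, $\pi(F_{\omega\up}(A))$ depends on $A$ only through $\pi(A)$ and equals the right-hand side of (\ref{automP2}). Hence $F_{\omega\up}$ descends to a well-defined holomorphic self-map $f_{\omega\up}\colon\ccc\to\ccc$, and because $F_{\omega\up}\in\aut\B$ we have $f_{\omega\up}(\ccc)\subseteq\ccc$. The group law and bijectivity are then read off at the matrix level: $U$ is diagonal and scalar matrices commute with $U$, so a direct computation shows $F_{\omega_1\up_1}\circ F_{\omega_2\up_2}=F_{\omega_3\up_3}$ for some $\omega_3\in\T$ with $\up_3=\up_1\circ\up_2$, and this descends under $\pi$ to the corresponding law on $\{f_{\omega\up}\}$; inverses stay in the family for the same reason. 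I expect the only genuine piece of work to be the cofactor calculation showing that $\phi(A)_{21}$ depends on $A$ only through $\pi(A)$---once that factorisation is in hand, everything else is formal.
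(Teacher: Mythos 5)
Your proposal is correct and follows essentially the same route as the paper: lift to the matrix ball $\B$ via the functional-calculus M\"obius automorphism composed with a diagonal unitary conjugation, verify by the adjugate computation and the spectral mapping theorem that $\pi\circ F_{\omega\up}$ factors through $\pi$, and obtain the group law and bijectivity on $\ccc$ by descent. The only cosmetic difference is your choice $U=\diag(\bar\mu,\mu)$ with $\mu^2=\omega$, where the paper uses $U_\omega=\diag(1,\omega)$; both multiply the $(2,1)$ entry by $\omega$ while leaving trace and determinant unchanged.
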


\section{The symmetrised bidisc and the pentablock}\label{Gamma}

The pentablock is closely related to the symmetrised bidisc, which is the domain
\beq\label{defG}
\G= \{(z+w,zw): |z|<1, \ |w| < 1\}
\eeq
in $\C^2$.  Indeed, it is clear from the definition \eqref{defPent} that $\ccc$ is fibred over $\G$ by the map $(a,s,p)\mapsto (s,p)$, since if $A\in\B$ then the eigenvalues of $A$ lie in $\D$ and so $(\tr A,\det A) \in\G$.

Some basic properties of $\G$ will be needed, in particular the following characterizations \cite{AY04}.
\begin{theorem}\label{recapG}
For a point $(s,p) \in \C^2$ the following statements are equivalent.
\begin{enumerate}
\item $(s,p) \in\G$;
\item $|s-\bar sp| < 1-|p|^2$;
\item $|p|<1$ and there exists $\beta\in\D$ such that $s=\beta+\bar\beta p$;
\item there exists $A\in\B$ such that $\tr A=s$ and $\det A=p$.
\end{enumerate}
\end{theorem}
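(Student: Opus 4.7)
The plan is to establish the cycle $(4)\Rightarrow(1)\Rightarrow(2)\Leftrightarrow(3)\Rightarrow(4)$. Three of these links are essentially mechanical; the fourth, $(3)\Rightarrow(4)$, is the substantive one and will be the main obstacle.

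For $(4)\Rightarrow(1)$: if $A\in\B$ then its eigenvalues $z,w$ satisfy $|z|,|w|\leq\|A\|<1$, so $(\tr A,\det A)=(z+w,zw)\in\G$. For $(1)\Rightarrow(2)$: write $(s,p)=(z+w,zw)$ with $z,w\in\D$ and verify the algebraic identities $s-\bar s p = z(1-|w|^2)+w(1-|z|^2)$, $1-|p|^2=(1-|z||w|)(1+|z||w|)$, and $|z|(1-|w|^2)+|w|(1-|z|^2)=(|z|+|w|)(1-|z||w|)$. Combining the triangle inequality with $(1-|z|)(1-|w|)>0$ then yields
\[
\frac{|s-\bar s p|}{1-|p|^2}\leq \frac{|z|+|w|}{1+|z||w|}<1.
\]

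The equivalence $(2)\Leftrightarrow(3)$ is a short algebraic check: if (3) holds then a direct substitution gives $s-\bar s p=\beta(1-|p|^2)$, from which (2) follows since $|\beta|<1$; conversely (2) forces $|p|<1$, and then $\beta:=(s-\bar s p)/(1-|p|^2)$ has modulus less than $1$ and one verifies $\beta+\bar\beta p=s$ directly.

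For the hard implication $(3)\Rightarrow(4)$, let $\la_1,\la_2$ be the roots of $X^2-sX+p$; it suffices to show $|\la_j|<1$, for then $A=\diag(\la_1,\la_2)\in\B$ realises $(s,p)$. The cases $\beta=0$ and $p=0$ are handled directly; otherwise let $b_\beta(z)=(z-\beta)/(1-\bar\beta z)$ denote the standard M\"obius automorphism of $\D$. Computing the symmetric functions of $b_\beta(\la_1),b_\beta(\la_2)$ using $\la_1+\la_2=s$, $\la_1\la_2=p$, and the hypothesis $s=\beta+\bar\beta p$ yields the key identities
\[
b_\beta(\la_1)\,b_\beta(\la_2)=p,\qquad b_\beta(\la_1)+b_\beta(\la_2)=-s,
\]
so $\{b_\beta(\la_1),b_\beta(\la_2)\}=\{-\la_1,-\la_2\}$ as multisets. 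Suppose now that $|\la_2|\geq 1$; since $b_\beta$ preserves each of $\D$, $\T$, and $\C\setminus\bar\D$, we have $|b_\beta(\la_2)|\geq 1$. Either $b_\beta(\la_2)=-\la_1$, forcing $|\la_1||\la_2|\geq 1>|p|$, a contradiction; or $b_\beta(\la_j)=-\la_j$ for both $j$, in which case both $\la_j$ are roots of $\bar\beta X^2-2X+\beta$, which must then be proportional to $X^2-sX+p$. Comparison of coefficients gives $p=\beta/\bar\beta$ and hence $|p|=1$, again a contradiction. Hence $|\la_j|<1$, as required. The non-trivial ingredient is the Blaschke swap $\{\la_1,\la_2\}\mapsto\{-\la_1,-\la_2\}$ under $b_\beta$, which pins down the location of the roots from the purely algebraic data $s=\beta+\bar\beta p$.
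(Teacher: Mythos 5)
This theorem is stated in the paper as a recap of results imported from \cite{AY04}; the paper gives no proof of it, so there is no internal argument to compare yours against, and your proposal must be judged as a self-contained proof — which it essentially is, and it is correct. The links (4)$\Rightarrow$(1)$\Rightarrow$(2)$\Leftrightarrow$(3) are exactly the routine computations you describe (I checked the identities $s-\bar sp=z(1-|w|^2)+w(1-|z|^2)$, $s-\bar sp=\beta(1-|p|^2)$, etc.; they are right, and the strictness comes from $(1-|z|)(1-|w|)>0$). The substance is your (3)$\Rightarrow$(4): the computation showing $(1-\bar\beta\la_1)(1-\bar\beta\la_2)=1-\bar\beta s+\bar\beta^2p=1-|\beta|^2$ both legitimises the Blaschke images (neither root is the pole $1/\bar\beta$) and, together with the numerator identities, gives the swap $\{b_\beta(\la_1),b_\beta(\la_2)\}=\{-\la_1,-\la_2\}$; from there the dichotomy argument works, and the diagonal matrix $\diag(\la_1,\la_2)$ realises $(s,p)$ in $\B$. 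One subcase needs a sentence of repair: in your second alternative ($b_\beta(\la_j)=-\la_j$ for both $j$), if $\la_1=\la_2$ then knowing that this common value is a root of $\bar\beta X^2-2X+\beta$ does not by itself force that quadratic to be proportional to $X^2-sX+p$ (the common root could be a simple root of the former), so the coefficient comparison is not justified as written. But this subcase is vacuous under your standing assumption $|\la_2|\ge 1$, since then $|p|=|\la_2|^2\ge 1$ contradicts $|p|<1$ directly; alternatively, observe at the outset that $|p|<1$ forces, say, $|\la_1|<1$, so $b_\beta(\la_1)\in\D$ cannot equal $-\la_2$, which puts you in the second alternative with $\la_1\ne\la_2$ and the proportionality argument then applies verbatim. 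With that one-line patch the cycle (4)$\Rightarrow$(1)$\Rightarrow$(2)$\Leftrightarrow$(3)$\Rightarrow$(4) is complete; your M\"obius-swap argument supplies precisely the root-location fact that the present paper simply quotes from \cite{AY04}, and it is a pleasantly elementary way to get it.
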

The following observation will facilitate the construction of matrices in $\B$.
\begin{lemma}\label{easy}
If the eigenvalues of $A\in\C^{2\times 2}$ lie in $\Delta$ then $\|A\| < 1$ if and only if $\det(1-A^*A) > 0$.
\end{lemma}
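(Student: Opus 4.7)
The plan is to factor $\det(1-A^*A)$ in terms of the singular values $\sigma_1\ge \sigma_2\ge 0$ of $A$. Since $1-A^*A$ is Hermitian with eigenvalues $1-\sigma_1^2$ and $1-\sigma_2^2$, we have
\[
\det(1-A^*A)=(1-\sigma_1^2)(1-\sigma_2^2).
\]
The forward implication is then immediate: if $\|A\|<1$ then $\sigma_1<1$, hence both factors on the right are strictly positive, and so is their product.

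For the reverse implication I would argue by contraposition. If $\|A\|\ge 1$, then $\sigma_1\ge 1$, so the factor $1-\sigma_1^2$ is non-positive. To rule out the case $\sigma_1>1>\sigma_2$ (which would make $\det(1-A^*A)<0$, not the $>0$ we want to contradict), one observes that this case is actually impossible here: the product $\sigma_1\sigma_2=|\det A|$ equals $|\la_1\la_2|$ where $\la_1,\la_2$ are the eigenvalues of $A$, and by hypothesis these lie in $\Delta$, so
\[
\sigma_1\sigma_2=|\det A|=|\la_1\la_2|\le 1.
\]
Hence if $\sigma_1\ge 1$ then $\sigma_2\le 1$, and in fact one of the following two scenarios holds: either $\sigma_1=1$ (so $1-\sigma_1^2=0$ and $\det(1-A^*A)=0$), or $\sigma_1>1$ forcing $\sigma_2<1$ (so $(1-\sigma_1^2)(1-\sigma_2^2)<0$). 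In both cases $\det(1-A^*A)\le 0$, contradicting $\det(1-A^*A)>0$.

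There is no substantive obstacle: the proof is essentially the identity $\det(1-A^*A)=\prod(1-\sigma_i^2)$ combined with the spectral-radius-vs-norm inequality $|\det A|\le 1$ coming from the eigenvalue hypothesis. The only minor care is to keep track of the $\ge$ versus $>$ in the singular values so that the strict positivity of the determinant really does force $\sigma_1<1$ rather than merely $\sigma_1\le 1$.
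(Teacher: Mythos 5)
Your proof is correct and uses essentially the same ingredients as the paper's: the identity $\det(1-A^*A)=(1-\sigma_1^2)(1-\sigma_2^2)$ together with $\sigma_1\sigma_2=|\det A|=|\la_1\la_2|\le 1$ from the eigenvalue hypothesis. The paper phrases the converse as a direct contradiction (positivity of the determinant with $\sigma_1\ge 1$ would force both singular values to exceed $1$, contradicting $|\det A|\le 1$), while you argue by contraposition with a small case split, but this is the same argument in substance.
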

\begin{proof}
Necessity is clear.  Conversely, suppose that $\si(A)\subset\Delta$ and $\det(1-A^*A) > 0$ but $\|A\| \geq 1$.  Let $A$ have eigenvalues $\la_1, \la_2$ and singular values $s_0,s_1$.  Then $s_0 \geq 1$ and $1-A^*A$ is unitarily equivalent to the matrix $\diag\{1-s_0^2, 1-s_1^2\}$.  Hence
\[
0< \det(1-A^*A) =(1-s_0^2)(1-s_1^2).
\]
Since $1-s_0^2 \leq 0$ it follows that $1-s_1^2 < 0$, that is, $s_0, s_1 > 1$.  Therefore
\[
1<s_0s_1 =|\det A| =|\la_1\la_2| \leq 1,
\]
a contradiction.   Thus $\|A\| < 1$.
\end{proof}
\begin{proposition}\label{asuffCondn}
Let
\beq\label{lambdas}
(s,p) = (\la_1+\la_2,\la_1\la_2)\in\G.
\eeq
If $a\in\C$ satisfies
\beq\label{modacndn}
|a| < \half|1-\bar\la_2\la_1|+\half (1-|\la_1|^2)^\half(1-|\la_2|^2)^\half
\eeq
then $(a,s,p) \in \ccc$.
\end{proposition}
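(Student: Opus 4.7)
The plan is to construct, for each $(a, s, p)$ meeting the hypothesis, an explicit matrix $A \in \B$ with $\pi(A) = (a, s, p)$, and to certify $\|A\| < 1$ via Lemma \ref{easy}. The case $a = 0$ is handled by $A = \diag(\la_1, \la_2)$. For $a \neq 0$, I introduce the one-parameter family
\[
A_t = \bbm \half s + t & y_t \\ a & \half s - t \ebm, \qquad y_t = \frac{\quart s^2 - t^2 - p}{a}, \quad t \in \C,
\]
each member of which has trace $s$ and determinant $p$, hence eigenvalues $\la_1, \la_2 \in \Delta$. Since the eigenvalues already lie in the closed disc, Lemma \ref{easy} reduces the question of whether $A_t \in \B$ to checking that $\det(I - A_t^* A_t) > 0$.

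A routine expansion, using $\det(I - M) = 1 - \tr M + \det M$ for $2 \times 2$ matrices $M$, yields
\[
\det(I - A_t^* A_t) = R - |a|^2 - 2|t|^2 - \frac{|t^2 + v|^2}{|a|^2},
\]
where $R := 1 + |p|^2 - \half|s|^2$ and $v := p - \quart s^2 = -\quart(\la_1 - \la_2)^2$. The core step is to choose $t$ so as to maximise the right-hand side. Treating $\tau := t^2$ as a free parameter in $\C$ (every complex number is a square), I minimise $2|\tau| + |\tau + v|^2/|a|^2$; the optimum splits at $|a|^2 = |v|$.

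When $|a|^2 \leq |v|$ I take $\tau = -(|v|-|a|^2)v/|v|$, so that $|\tau + v| = |a|^2$, and the parallelogram identity $\half|s|^2 + 2|v| = |\la_1|^2 + |\la_2|^2$ collapses the expression to
\[
\det(I - A_t^* A_t) = R - 2|v| = (1-|\la_1|^2)(1-|\la_2|^2) > 0,
\]
independently of the size of $|a|$. When $|a|^2 > |v|$ I take $t = 0$; then
\[
\det(I - A_0^* A_0) = R - |a|^2 - |v|^2/|a|^2,
\]
which is positive exactly when $|a|^2$ lies strictly between the two positive roots $\al_- \leq \al_+$ of $X^2 - RX + |v|^2 = 0$. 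Since $\al_-\al_+ = |v|^2$ forces $\al_- \leq |v|$, the lower bound $|a|^2 > \al_-$ is automatic from the case assumption $|a|^2 > |v|$.

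The remaining, largely computational, obstacle is to identify $\sqrt{\al_+}$ with the bound appearing in the hypothesis. Using $\al_+ + \al_- = R$, $\al_+ \al_- = |v|^2$, and the identity $|1 - \bar\la_2 \la_1|^2 - (1 - |\la_1|^2)(1 - |\la_2|^2) = |\la_1 - \la_2|^2 = 4|v|$, one obtains
\[
\al_\pm = \left( \tfrac{1}{2}|1 - \bar\la_2 \la_1| \pm \tfrac{1}{2}\sqrt{(1 - |\la_1|^2)(1 - |\la_2|^2)} \right)^2.
\]
Thus the hypothesis is exactly $|a| < \sqrt{\al_+}$, so Lemma \ref{easy} gives $\|A_t\| < 1$ in both cases and hence $(a, s, p) = \pi(A_t) \in \PP$.
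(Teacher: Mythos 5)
Your proof is correct and is essentially the paper's argument: after the trivial reduction to $a\neq 0$, your matrices $A_0$ (for $|a|>|w|$) and $A_t$ with $\tau=-(|v|-|a|^2)v/|v|$ (for $|a|\le|w|$) are exactly the two matrices the paper writes down, the norm condition is certified by the same Lemma \ref{easy}, and identifying $\al_\pm$ with $c_\pm^2=\bigl(\half|1-\bar\la_2\la_1|\pm\half\Lambda\bigr)^2$ via the sum $R$ and product $|v|^2$ is the same algebra as equations \eqref{keepthis} and \eqref{eqccw}. The only difference is presentational (a disjoint case split at $|a|=|w|$ and the packaging as an optimisation over $\tau$), so nothing of substance is gained or lost.
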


\begin{proof}
Consider $(a,s,p)$ with $(s,p)$ as in equation \eqref{lambdas} and $a$ satisfying the inequality \eqref{modacndn}.  We must construct $A\in\mat$ such that $\|A\|<1, \, a_{21}=a,\, \tr A=s$ and $\det A =p$.   Let 
\[
\Lambda=(1-|\la_1|^2)^\half(1-|\la_2|^2)^\half
\]
and define $c_\pm$ by
\[
c_\pm = \half |1-\bar\la_2\la_1| \pm \half \Lambda.
\]
Note that $0<c_- < c_+$.

Consider the case that $c_- < |a| < c_+$.
Let $w=\half(\la_1-\la_2)$, so that $w^2=\tfrac 14 s^2-p$, and let
\[
A=\bbm \half s& w^2/a \\ a & \half s \ebm.
\]
We have $\tr A=s, \, \det A=p$ and
\begin{align}\label{detpoly}
|a|^2\det(1-A^*A) &= |a|^2 (1-\tr(A^*A)+|\det A|^2)\notag\\
	&= -|a|^4 + (1-\half|s|^2+|p|^2)|a|^2 -|w|^4.
\end{align}
Now
\begin{align}\label{keepthis}
c_-^2+c_+^2 &= \half |1-\bar\la_2\la_1|^2 +\half \Lambda^2 \notag\\
	&=\half\{1-2\re(\bar\la_2\la_1) + |\la_1\la_2|^2+1-|\la_1|^2-|\la_2|^2+ |\la_1\la_2|^2\} \notag\\
	&= 1-\half|s|^2+|p|^2
\end{align}
and
\begin{align}\label{eqccw}
c_- c_+ &= \quart\{ |1-\bar\la_2\la_1|^2-\Lambda^2\} \notag\\
	&=\quart\{1-2\re(\bar\la_2\la_1)+|\la_1\la_2|^2-1+|\la_1|^2+|\la_2|^2 -|\la_1\la_2|^2\} \notag\\
	&= \quart |\la_1-\la_2|^2 = |w|^2.
\end{align}
Comparison with equation \eqref{detpoly} reveals that
\[
|a|^2\det(1-A^*A) = -(|a|^2-c_-^2)(|a|^2-c_+^2).
\]
Hence, when $c_-<|a|<c_+$ we have $\det(1-A^*A) > 0$ and so, by Lemma \ref{easy}, $\|A\| < 1$ and therefore  $(a,s,p) \in \ccc$.

In the case that $|a| \leq |w|$ choose $\zeta\in\T$ such that $\la_1-\la_2=\zeta |\la_1-\la_2|$ and let
\[
A=\bbm \half s+(|w|^2-|a|^2)^\half \zeta & \zeta^2\bar a \\ a & \half s-(|w|^2-|a|^2)^\half\zeta \ebm.
\]
Then $\pi(A)=(a,s,p)$, and a simple calculation shows that
\[
\det(1-A^*A) = (1-|\la_1|^2)(1-|\la_2|^2) > 0
\]
and hence $\|A\| < 1$.

We have shown that $(a,s,p) \in \ccc$ in the cases $c_- < |a| < c_+$ and $|a| \leq |w|$.
The proposition will follow if we can show that
\[
|c_-|\le |w|<|c_+|.
\]
Since $|w|$ is the geometric mean of $c_-$ and $c_+$, by \eqref{eqccw}, this inequality is true. 
Thus
$(a,s,p)\in\PP$ for all $a$ such that $|a| < \half|1-\bar\la_2\la_1|+ \half \La$.
\end{proof}
The converse of Proposition \ref{asuffCondn} is also true (Theorem \ref{4equiv}).  Thus the fibre of $\ccc$ over the point $(\la_1+\la_2, \la_1\la_2)$ is the open disc of radius 
\[
\half|1-\bar\la_2\la_1|+\half (1-|\la_1|^2)^\half(1-|\la_2|^2)^\half.
\]

The closure $\bar\ccc$ of $\ccc$ will also play a role; call it the {\em closed pentablock}.   It is elementary that $\bar\ccc$ is the image of the closure $\bar\B$ of $\B$ under $\pi$.  

We denote by $\Gamma$ the closure of $\G$ in $\C^2$, so that
\[
\Gamma = \{ (z+w,zw): |z|\leq 1, |w|\leq 1\}.
\]
\begin{proposition}\label{anothersuff}
Let
\beq\label{morelambdas}
(s,p) = (\la_1+\la_2,\la_1\la_2)\in\Gamma.
\eeq
If $a\in\C$ satisfies
\beq\label{modacndn2}
|a| \leq \half|1-\bar\la_2\la_1|+\half (1-|\la_1|^2)^\half(1-|\la_2|^2)^\half
\eeq
then $(a,s,p) \in \bar\ccc$.
\end{proposition}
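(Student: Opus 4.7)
The plan is to deduce Proposition~\ref{anothersuff} from Proposition~\ref{asuffCondn} by approximating $(a,s,p)$ from inside $\ccc$. Since $\bar{\ccc}$ contains every limit of points in $\ccc$, it suffices to produce a sequence $(a_n, s_n, p_n) \in \ccc$ converging to $(a, s, p)$.

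Write $R(\mu_1, \mu_2) = \half |1 - \bar\mu_2 \mu_1| + \half (1 - |\mu_1|^2)^\half (1 - |\mu_2|^2)^\half$; this is a continuous function on $\Delta \times \Delta$, and the hypothesis reads $|a| \le R(\la_1, \la_2)$. Set $\la_i^{(n)} = r_n \la_i$ for a sequence $r_n \in (0,1)$ with $r_n \to 1$. Then $|\la_i^{(n)}| < 1$, so $(s_n, p_n) := (\la_1^{(n)} + \la_2^{(n)}, \la_1^{(n)} \la_2^{(n)}) \in \G$, and by continuity $R(\la_1^{(n)}, \la_2^{(n)}) \to R(\la_1, \la_2)$.

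Next put $a_n = (1 - 1/n) a$, so $a_n \to a$. By continuity of $R$, we may choose the sequence $r_n$ so that $R(\la_1^{(n)}, \la_2^{(n)}) > |a_n|$ for every $n$: if $R(\la_1, \la_2) > 0$, then for $n$ large enough we have $R(\la_1^{(n)}, \la_2^{(n)}) > (1 - 1/n) R(\la_1, \la_2) \ge |a_n|$; if $R(\la_1, \la_2) = 0$ then necessarily $a = 0 = a_n$, while $R(\la_1^{(n)}, \la_2^{(n)}) > 0$ holds automatically once $|\la_i^{(n)}| < 1$. Proposition~\ref{asuffCondn} then yields $(a_n, s_n, p_n) \in \ccc$, and passing to the limit gives $(a, s, p) \in \bar{\ccc}$.

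The only delicate point is the boundary case $|a| = R(\la_1, \la_2)$, where Proposition~\ref{asuffCondn} cannot be applied directly; the shrinkage factor $(1 - 1/n)$ on $a$, combined with the continuity of $R$ on the closed bidisc, is precisely what restores the strict inequality needed. Everything else is bookkeeping.
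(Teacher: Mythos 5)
Your proof is correct and follows essentially the same route as the paper: approximate $(a,s,p)$ from inside $\ccc$ by scaling the eigenvalues (and $a$) inward, apply Proposition \ref{asuffCondn} to the scaled points, and pass to the limit. The only difference is bookkeeping — the paper scales $a$, $\la_1$, $\la_2$ all by the same $r$ and verifies two explicit inequalities showing the scaled point satisfies the strict bound, whereas you shrink $a$ by a separate factor and invoke continuity of the radius function, choosing $r_n$ adaptively for each $n$; both are valid.
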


\begin{proof}
Let the relations \eqref{morelambdas} and \eqref{modacndn2} hold.  Pick $r\in(0,1)$; then
\[
r|a| \leq \half r|1-\bar\la_2\la_1|+\half r(1-|\la_1|^2)^\half(1-|\la_2|^2)^\half.
\]
   Simple calculations show that
\begin{align*}
r|1-\bar\la_2\la_1| &< |1-r^2\bar\la_2\la_1|, \\
 r(1-|\la_1|^2)^\half(1-|\la_2|^2)^\half &< (1-r^2|\la_1|^2)^\half(1-r^2|\la_2|^2)^\half.
\end{align*}
Hence 
\[
r|a| < \half |1-r^2\bar\la_2\la_1|+\half (1-r^2|\la_1|^2)^\half(1-r^2|\la_2|^2)^\half.
\]
It follows from Proposition \ref{asuffCondn} that $(ra, rs, r^2p) \in \ccc$ for all $r\in (0,1)$.  Hence $(a,s,p) \in\bar\ccc$.
\end{proof}

\section{An instance of $\mu$ and an associated domain}\label{instance}

The {\em structured singular value} $\mu_E$ of $A\in\C^{m\times n}$ corresponding to subspace $E$ of $\C^{n\times m}$ is defined by
\beq\label{defmu}
\frac{1}{\mu_E(A)} = \inf \{\|X\|: X\in E \mbox{ and } \det (1-AX)=0\}.
\eeq
In the cases that  1) $E$ comprises the whole of $\C^{n\times m}$ and 2) $m=n$ and $E$ consists of the scalar multiples of the identity,  $\mu_E$ is a familiar object, to wit 
the operator norm and the spectral radius respectively.   When $E$ comprises the diagonal matrices, $\mu_E$ is an intermediate cost function $\mu_{\mathrm {diag}}$.  In these three cases  the corresponding $\mu$-synthesis problem leads to the analysis of the classical Nevanlinna-Pick interpolation problem, the symmetrised polydisc and (when $m=n=2$) the tetrablock respectively.  In this paper we are concerned with the case that $m=n=2$ and
\[
E=\spa\left\{1,\bbm 0&1\\0&0 \ebm\right\} \subset \C^{2\times2},
\]
another natural choice of $E$.  Observe that a matrix $X=\bbm z&w\\0&z \ebm \in E$ is a contraction if and only if $|w|\leq 1-|z|^2$.

\begin{proposition}\label{mucriter}
For any matrix $A= \bbm a_{ij} \ebm\in\mat$,
\beq\label{formmucriter}
\mu_E(A) < 1 \mbox{ if and only if } (s,p)\in\G \;\mbox{ and }\; |a_{21}|\sup_{z\in\D}\frac{1-|z|^2}{|1-sz+pz^2|}< 1
\eeq
and 
\beq\label{mucriterbis}
\mu_E(A) \leq 1 \mbox{ if and only if } (s,p)\in\Gamma \mbox{ and }\; \frac{|a_{21}|(1-|z|^2)}{|1-sz+pz^2|}\leq 1 \mbox{ for all } z\in\D,
\eeq
where $s=\tr A$ and $ p=\det A$.
\end{proposition}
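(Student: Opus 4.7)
The plan is to unfold the definition of $\mu_E$ directly. Parametrise $E$ by writing $X = zI + wN$ with $N = \bbm 0 & 1 \\ 0 & 0 \ebm$; the observation recorded in the paragraph preceding the proposition gives $\|X\| \leq 1 \Leftrightarrow |w| \leq 1 - |z|^2$, and the strict version $\|X\| < 1 \Leftrightarrow |z| < 1$ and $|w| < 1 - |z|^2$. Then, via the $2\times 2$ identity $\det(I - M) = 1 - \tr M + \det M$ applied to $M = AX$, a direct computation yields
\[
\det(I - AX) = 1 - sz - a_{21} w + pz^2.
\]

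From the definition $1/\mu_E(A) = \inf\{\|X\|: X \in E, \det(I-AX)=0\}$, the condition $\mu_E(A) < 1$ is equivalent to $\det(I-AX) \neq 0$ for every $X \in E$ with $\|X\| \leq 1$. I would first test this on $X = zI$ (so $w = 0$, $\|X\| = |z|$); this forces $1 - sz + pz^2 \neq 0$ for all $|z| \leq 1$, which, using the factorisation $1 - sz + pz^2 = (1 - \la_1 z)(1 - \la_2 z)$ with $\la_1, \la_2$ the eigenvalues of $A$, is precisely $(s,p) \in \G$ by Theorem \ref{recapG}. With $(s,p) \in \G$ in hand, for each $|z| < 1$ the equation $a_{21} w = 1 - sz + pz^2$ has a unique solution $w$ (when $a_{21} \neq 0$; the case $a_{21}=0$ is trivial), and requiring that this $w$ violate $|w| \leq 1 - |z|^2$ translates to the pointwise inequality $|a_{21}|(1 - |z|^2) < |1 - sz + pz^2|$ for all $z \in \D$. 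Since $(s,p) \in \G$ makes the denominator bounded below on $\bar\D$, the function $z \mapsto (1-|z|^2)/|1-sz+pz^2|$ extends continuously to $\bar\D$ vanishing on $\T$; its supremum is therefore attained at some interior point, and so the pointwise strict inequality is equivalent to the strict sup inequality $|a_{21}|\sup_{z\in\D}\frac{1-|z|^2}{|1-sz+pz^2|} < 1$, completing \eqref{formmucriter}.

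For \eqref{mucriterbis} I would run the same scheme using the reformulation: $\mu_E(A) \leq 1$ iff every $X \in E$ with $\|X\| < 1$ has $\det(I-AX) \neq 0$. The $X = zI$ test now restricts $1 - sz + pz^2 \neq 0$ only on $\D$ itself (not on $\T$), giving the weaker conclusion $(s,p) \in \Gamma$. The fibre analysis yields the non-strict pointwise inequality $|a_{21}|(1-|z|^2) \leq |1-sz+pz^2|$ for all $z \in \D$, which is exactly the stated condition; no continuity upgrade is required because the inequality is already in pointwise form.

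The step requiring most care is the strict-versus-non-strict bookkeeping: distinguishing $\|X\| \leq 1$ (relevant for $\mu_E(A) < 1$) from $\|X\| < 1$ (relevant for $\mu_E(A) \leq 1$), the corresponding dichotomy between $\G$ and $\Gamma$ produced by the $X = zI$ test, and the continuity argument that passes pointwise strict inequality to a strict inequality for the supremum in \eqref{formmucriter}.
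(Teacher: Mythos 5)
Your treatment of \eqref{formmucriter} coincides with the paper's: the same formula $\det(1-AX)=1-sz+pz^2-a_{21}w$, the same $w=0$ test giving $(s,p)\in\G$, the same analysis of the fibre in $w$, and the same continuity argument (since $1-sz+pz^2$ is bounded away from zero on $\bar\D$, the function $(1-|z|^2)/|1-sz+pz^2|$ attains its supremum at an interior point, so the pointwise strict inequality upgrades to a strict supremum inequality). Two points should be made explicit. First, your opening claim that $\mu_E(A)<1$ is \emph{equivalent} to $\det(1-AX)\neq 0$ for all $X\in E$ with $\|X\|\le 1$ is not purely definitional in the reverse direction: knowing only that every singular $X\in E$ has $\|X\|>1$ yields $\inf\|X\|\ge 1$, i.e.\ $\mu_E(A)\le 1$; to get the strict inequality one needs the compactness argument the paper flags (the singular set in the finite-dimensional space $E$ is closed, so the infimum is attained and hence exceeds $1$). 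Second, for \eqref{mucriterbis} you genuinely depart from the paper, which simply scales: $\mu_E(rA)=r\mu_E(A)$, so $\mu_E(A)\le 1$ iff $\mu_E(rA)<1$ for all $r\in(0,1)$, and one applies \eqref{formmucriter} to $rA$ and lets $r\to 1$. Your direct route, based on the reformulation that $\mu_E(A)\le 1$ iff no $X\in E$ with $\|X\|<1$ is singular (which, unlike the strict case, \emph{is} immediate from the definition, with no compactness needed), is a sound and self-contained alternative; the paper's one-line scaling in fact conceals a small verification that the hypotheses of \eqref{formmucriter} hold for $rA$. Your route does, however, rest on the strict norm criterion $\|X\|<1$ iff $|z|<1$ and $|w|<1-|z|^2$, which the paper records only in the non-strict form; it is true (e.g.\ from $\det(1-X^*X)=(1-|z|^2)^2-|w|^2$ together with $1-|z|^2>0$), and the strictness of $|w|<1-|z|^2$ is precisely what makes the contradiction in your sufficiency argument strict, so include that verification. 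Your identification of the nonvanishing of $1-sz+pz^2$ on $\D$ with $(s,p)\in\Gamma$ via the factorisation $(1-\la_1z)(1-\la_2z)$ is correct, though it is not literally part of Theorem \ref{recapG}.
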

\begin{proof} 
For $ X=\bbm z&w\\0&z \ebm$, 
\[
1-AX=\bbm 1-a_{11}z&-a_{11}w-a_{12}z \\ -a_{21}z & 1-a_{21}w-a_{22}z \ebm
\]
and so 
\begin{align*}
\det(1-AX)&= 1-(\tr A)z +(\det A)z^2 - a_{21}w\\
	&= 1-sz+pz^2 -a_{21}w.
\end{align*}
We have
\begin{align}\label{mu<1}
\mu_E(A)<1 &\Leftrightarrow  \inf \{\|X\|: X\in E \mbox{ and } \det (1-AX)=0\} > 1.
\end{align}
Suppose that $\mu_E(A) < 1$.  It follows from the last equivalence that if $|w| \leq 1-|z|^2$ then the contraction 
$ X=\bbm z&w\\0&z \ebm$ satisfies $\det(1-AX) \neq 0$, that is, 
\beq\label{spw}
1-sz+pz^2 \neq a_{21} w \quad \mbox{ whenever } |w| \leq 1-|z|^2.
\eeq
  In particular, on taking $w=0$, we find that $1-sz+pz^2 \neq 0$ for all $z\in \Delta$, which is to say that $(s,p)\in\G$.  Furthermore, the inequation \eqref{spw} implies that
\[
|1-sz+pz^2| > |a_{21}| (1-|z|^2) \quad \mbox{ for all } z\in\Delta.
\]
In particular, $|1-sz+pz^2|$ is strictly positive on $\T$, and consequently the function
\[
|1-sz+pz^2|/(1-|z|^2)
\] 
tends to $\infty$ as $ |z| \to 1$ and hence attains its infimum over $\D$ at a point of $\D$.
Necessity in the statement \eqref{formmucriter} follows.

Conversely, suppose that $(s,p)\in\G$ and 
\beq\label{ass}
|a_{21}|\sup_{z\in\D}\frac{1-|z|^2}{|1-sz+pz^2|}< 1.
\eeq
In particular, on letting $z=0$, we have
\beq\label{moda<1}
|a_{21}| < 1.
\eeq
We wish to show that $\mu_E(A) < 1$.

Consider $X\in E$ and suppose that $\det(1-AX)=0 $  and $\|X\|\leq 1$.  We can write $X=\bbm v&w\\0&v \ebm$ where $|w| \leq 1-|v|^2$.  Clearly $|v| \leq 1$.  If $|v|=1$ then $w=0$ and so
\[
0=\det(1-AX)=1-sv+pv^2-a_{21}w= 1-sv+pv^2,
\]
contrary to the assumption that $(s,p)\in\G$.  Hence we have $|v|<1$.  
Moreover
\[
|1-sv+pv^2|= |a_{21}w| \leq|a_{21}|(1-|v|^2)
\]
and so
\[
|a_{21}|\frac{1-|v|^2}{|1-sv+pv^2|} \geq 1,
\]
contrary to the hypothesis \eqref{ass}.  This contradiction shows that 
$X\in E$ and $\det(1-AX)=0$ together imply that $\|X\|>1$.  
A compactness argument shows that the infimum of $\|X\|$ over  $X\in E$ such that $\det(1-AX) =0$ is greater than $1$, or in other words,  $\mu_E(A) < 1$.

The characterization \eqref{mucriterbis} follows by scaling.  Observe that $\mu_E(rA) = r\mu_E(A)$ and so $\mu_E(A) \leq 1$ if and only if $\mu_E(rA) < 1$ for all $r\in(0,1)$.
\end{proof}
\begin{corollary}\label{dependsonly}
For $A\in\mat$ the value of $\mu_E(A)$ depends only on the quantities $\tr A, \det A$ and $a_{21}$.
\end{corollary}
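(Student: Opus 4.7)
The plan is to exploit the explicit formula for $\det(1-AX)$ that was already derived at the beginning of the proof of Proposition \ref{mucriter}. For any $X = \bbm z & w \\ 0 & z \ebm \in E$, that computation gave
\[
\det(1-AX) = 1 - sz + pz^2 - a_{21}w,
\]
where $s = \tr A$ and $p = \det A$. First I would observe that this formula shows the locus
\[
\{X \in E : \det(1-AX) = 0\}
\]
is completely determined by the three scalars $s$, $p$ and $a_{21}$, with no dependence whatsoever on $a_{11}$, $a_{12}$ or $a_{22}$.

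Since the operator norm $\|X\|$ depends only on $X$ itself, the infimum appearing in the definition \eqref{defmu},
\[
\frac{1}{\mu_E(A)} = \inf\{\|X\| : X \in E,\ \det(1-AX) = 0\},
\]
is therefore a function of $(s, p, a_{21})$ alone, and hence so is $\mu_E(A)$. I anticipate no real obstacle; the corollary is essentially a direct reading of the determinantal identity already established in the proof of Proposition \ref{mucriter}. An alternative, slightly more roundabout route would be to combine the characterizations \eqref{formmucriter} and \eqref{mucriterbis} with the scaling identity $\mu_E(rA) = r\mu_E(A)$: both characterizations involve $A$ only through $s$, $p$ and $a_{21}$, so the level sets $\{A : \mu_E(A) < t\}$ and $\{A : \mu_E(A) \leq t\}$ depend only on these three entries for every $t > 0$, which again forces $\mu_E(A)$ to be a function of $(s, p, a_{21})$. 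The first approach is preferable because it avoids the scaling step entirely.
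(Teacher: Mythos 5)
Your proposal is correct and in substance the same as the paper's: the paper states this as an immediate corollary of Proposition \ref{mucriter}, whose proof contains exactly the identity $\det(1-AX)=1-sz+pz^2-a_{21}w$ that you invoke, so the zero locus in $E$, and hence the infimum defining $\mu_E(A)$, is a function of $(a_{21},\tr A,\det A)$ alone. Your direct reading of the determinantal identity (rather than the level-set-plus-scaling detour) is exactly the intended argument.
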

Accordingly we introduce a quotient domain of $\{A: \mu_E(A)<1\}$.
\begin{definition}\label{defCcc}
$\B_\mu$ is the domain in $\mat$ given by 
\beq\label{Bmu}
\B_\mu= \{A\in\mat: \mu_E(A) < 1\}.
\eeq
$\ccc_\mu$ is the domain in $ \C^3$  given by
\beq\label{defDomain}
\PP_\mu= \{(a_{21},\tr A,\det A): A\in\mat, \,  \mu_E(A)<1\} \subset \C^3.
\eeq
\end{definition}
Corollary \ref{dependsonly} asserts that $A\in\mat$ satisfies $A\in\B_\mu$ if and only if $\pi(A)\in\ccc_\mu$.

A major result of the paper is that $\ccc_\mu = \ccc$ (Theorem \ref{4equiv}).

\section{A class of linear fractional functions}\label{Linfrac}

Proposition \ref{mucriter} introduces some linear fractional functions that will play an important role in the paper.
\begin{definition}\label{defPsi}
For $z\in\D$ and $(a,s,p)\in \C^3$ such that $1-sz+pz^2 \neq 0$ let 
\[
\Psi_z(a,s,p)= \frac{a(1-|z|^2)}{1-sz+pz^2}
\]
and let
\[
\kappa(s,p)= \sup_{z\in\D} \frac{1-|z|^2}{|1-sz+pz^2|}.
\]
\end{definition}
Proposition \ref{mucriter} can then be stated:  $\mu_E(A)<1$ if and only if $(\tr A, \det A) \in \G$ and 
\[
\sup_{z\in\D} |\Psi_z (a_{21}, \tr A, \det A)| < 1,
\]
or alternatively, if and only if
\[
|a_{21}| \kappa(\tr A, \det A) < 1.
\]
Recall from Theorem \ref{recapG} that the general point of $\G$ can be written in the form $(\beta+\bar\beta p,p)$ for some $\beta,p\in\D$.
\begin{proposition}\label{valsup}
For $\beta \in\D$ and $(s,p)= (\beta+\bar\beta p,p) \in\G$,
\begin{align}\label{formK}
\kappa(s,p) &= \left|1- \frac{\half s\bar\beta}{1+\sqrt{1-|\beta|^2}}\right|\inv.
\end{align}
Moreover the supremum of $\frac{1-|z|^2}{|1-sz+pz^2|}$ over $z\in\D$ is attained uniquely at the point
\beq\label{attained}
z= \frac{\bar\beta}{1+\sqrt{1-|\beta|^2}}.
\eeq
\end{proposition}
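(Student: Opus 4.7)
The plan is to locate the unique interior critical point of
\[
h(z):=\frac{1-|z|^2}{|1-sz+pz^2|}
\]
on $\D$ and to evaluate $h$ there. Set $F(z):=1-sz+pz^2$. By Theorem~\ref{recapG}, $(s,p)\in\G$ yields the factorization $F(z)=(1-\la_1z)(1-\la_2z)$ with $|\la_i|<1$, so $F$ is zero-free on $\bar\D$; consequently $h$ extends continuously to $\bar\D$, vanishes on $\T$, and is positive on $\D$. Hence $h$ attains its positive supremum at an interior critical point. The Wirtinger condition $\partial_{\bar z}[|F|^2/(1-|z|^2)^2]=0$ gives $F'(z)(1-|z|^2)+2\bar zF(z)=0$, and upon expansion the two terms of the form $pz|z|^2$ cancel thanks to $\bar zz=|z|^2$, leaving the clean critical equation
\[
s(1+|z|^2)=2(\bar z+pz).
\]

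To solve, I would combine this equation with its complex conjugate and eliminate $\bar z$, obtaining $(\bar s-\bar ps)(1+|z|^2)=2z(1-|p|^2)$. A short calculation from $s=\beta+\bar\beta p$ produces the key identity $\bar s-\bar ps=\bar\beta(1-|p|^2)$, which reduces the critical equation to the essentially real relation
\[
2z=\bar\beta(1+|z|^2).
\]
Thus $z=\mu\bar\beta$ for some real $\mu\ge 0$, and $\mu$ satisfies $|\beta|^2\mu^2-2\mu+1=0$. Writing $t:=\sqrt{1-|\beta|^2}$, only the root $\mu=(1-t)/|\beta|^2$ gives $|z|<1$; simplification yields $z=\bar\beta/(1+t)$, which is the claimed $z_0$. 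Uniqueness of the critical point in $\D$ therefore forces $z_0$ to be the unique maximizer of $h$.

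Finally one substitutes into $h$: the identities $1-|z_0|^2=2t/(1+t)$ and $s\bar\beta=(1-t^2)+\bar\beta^2p$ give
\[
1-sz_0+pz_0^2=\frac{t\bigl((1+t)^2-\bar\beta^2p\bigr)}{(1+t)^2}, \qquad 1-\frac{\half s\bar\beta}{1+t}=\frac{(1+t)^2-\bar\beta^2p}{2(1+t)}.
\]
Dividing, the factor $(1+t)^2-\bar\beta^2p$ cancels and one obtains $h(z_0)=|1-\half s\bar\beta/(1+t)|^{-1}$, which is precisely \eqref{formK}. The main obstacle is the reduction of the Wirtinger equation to the scalar equation $2z=\bar\beta(1+|z|^2)$: it converts a two-real-variable optimization into a real quadratic, and it hinges on the identity $\bar s-\bar ps=\bar\beta(1-|p|^2)$, which is where the $\beta$-parametrization of $\G$ from Theorem~\ref{recapG} comes into its own.
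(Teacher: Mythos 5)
Your proof is correct, and it takes a genuinely different route from the paper's. The paper inverts the variable and maximizes $g(z)=(|z|^2-1)/|z^2-sz+p|$ over the \emph{unbounded} region $|z|>1$; since there $g\to 1$ at infinity, the authors must treat $s=0$ separately and then carry out a lengthy verification that the critical value exceeds $1$ before they can conclude that the interior critical point is the global maximum, and they evaluate $\kappa$ via the auxiliary relation $h'(z)=2\bar z h(z)/(|z|^2-1)$ at the critical point. You instead stay in $\D$: since $1-sz+pz^2=(1-\la_1z)(1-\la_2z)$ is zero-free on $\bar\D$, your $h$ is continuous on the compact set $\bar\D$, vanishes on $\T$ and is positive inside, so the maximum is automatically attained at an interior critical point; the Wirtinger computation reduces, via $\bar s-\bar ps=\bar\beta(1-|p|^2)$, to $2z=\bar\beta(1+|z|^2)$, which has the single solution $z_0=\bar\beta/(1+\sqrt{1-|\beta|^2})$ in $\D$, so every maximizer equals $z_0$ and both the uniqueness claim and the value \eqref{formK} follow by the direct substitution you carry out (your identities (a)--(d) all check, and the common factor $(1+t)^2-\bar\beta^2p$ is nonzero since $(1+t)^2>1>|\bar\beta^2p|$). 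This buys a shorter argument that needs no separate $s=0$ case and no ``critical value $>1$'' computation, at the cost of not producing the paper's by-products (the identity \eqref{formh'} and the inequality $\kappa(s,p)>1$). One cosmetic point: when $\beta=0$ the root selection ``$\mu=(1-t)/|\beta|^2$'' divides by $|\beta|^2$; you should just note that in that case $2z=\bar\beta(1+|z|^2)$ forces $z=0=z_0$ directly, which is consistent with your formula.
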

\begin{proof}
Let us first deal with the case that $s=0$.  We have, in terms of $w=1/z^2$,
\[
\kappa(0,p)=\sup_{|w|>1} \frac{|w|-1}{|w+p|}
\]
for $p\in\D$.  Clearly $|w+p| >|w|-1$ when $|w|>1, \, p\in\D$, and so the right hand side is at most $1$.  On letting $w\to \infty$ we see that the supremum is exactly $1$, attained uniquely at $w=\infty$. Thus equation \eqref{formK} is true when $s=0$, attained only at $z=0$, in agreement with equation \eqref{attained} since here $\beta=0$.

Now suppose that $s\neq 0$.  The definition of $\kappa$ can also be written
\[
\kappa(s,p)= \sup_{|z|>1} \frac{|z|^2-1}{|z^2-sz+p|}.
\]
Let
\[
h(z)=z^2-sz+p=u(z)+iv(z)
\]
with $u,v$ real valued and let
\[
g(z) = \frac{|z|^2-1}{|h(z)|}.
\]
We have, at any point other than a zero of $h$,
\begin{align*}
\frac{\partial}{\partial x}|h(z)|&= \frac{\partial}{\partial x} (u^2+v^2)^{\half} = \frac{uu_x+vv_x}{|h(z)|},\\
\frac{\partial}{\partial y}|h(z)|&= \frac{\partial}{\partial y} (u^2+v^2)^{\half} = \frac{vu_x-uv_x}{|h(z)|},\\
\frac{\partial}{\partial x} g(z) &= \frac{\partial}{\partial x}\frac{x^2+y^2-1}{|h(z)|} \\
	&= \frac{|h(z)|2x-(|z|^2-1) \frac{uu_x+vv_x}{|h(z)|}}{|h(z)|^2},\\
\frac{\partial}{\partial y} g(z) &= \frac{|h(z)|2y-(|z|^2-1) \frac{vu_x-uv_x}{|h(z)|}}{|h(z)|^2}.
\end{align*}
At critical points of $g$ in $\{z:|z|>1\}$,
\begin{align*}
(|z|^2-1)(uu_x+vv_x)&=2x|h(z)|^2, \\
(|z|^2-1)(vu_x-uv_x)&=2y|h(z)|^2.
\end{align*}
We may solve these equations to obtain
\[
u_x=\frac{2}{|z|^2-1}(xu+yv), \quad v_x=\frac{2}{|z|^2-1}(xv-yu),
\]
and hence
\beq\label{formh'}
h'(z)=u_x+iv_x= \frac{2}{|z|^2-1}(xh(z)-iyh(z)) = \frac{2\bar z h(z)}{|z|^2-1}.
\eeq

Thus the critical points of $g$ are the points $z$, $|z|>1$, such that
\[
(2z-s)(|z|^2-1)=2\bar z(z^2-sz+p)
\]
or equivalently
\beq\label{eqfors}
s|z|^2-2z-2p\bar z+s=0,
\eeq
whence also 
\[
\bar s |z|^2 - 2\bar pz-2\bar z + \bar s=0.
\]
From these two equations we deduce that
\[
(-2\bar s+2s\bar p)z + (-2\bar s p+2s)\bar z =0
\]
In terms of  $\beta=(s-\bar sp)/(1-|p|^2)$
the last equation becomes $\beta\bar z= \bar\beta z$.  Note that $\beta \neq 0$ since $s\neq 0$. We therefore have $z=r\beta$ for some $r\in\R$.  By virtue of equation \eqref{eqfors}, $r$ must satisfy
\begin{align*}
0&= s|z|^2-2z-2p\bar z+s \\
&= s|z|^2-2 r s +s\\
&= s(|z|^2-2 r +1)\\
&=(\beta+\bar\beta p)(r^2|\beta|^2-2r+1).
\end{align*}
Hence the only possible critical points of $g$ are $z=r\beta$ where
\[
r=\frac{1\pm \sqrt{1-|\beta|^2}}{|\beta|^2}.
\]
It is straightforward to show that $|r\beta|> 1$ only for the plus sign in the above expression, and so we have $z=r\beta$ where
\[
r= \frac{1}{1-\sqrt{1-|\beta|^2}}.
\]
  On retracing our steps we find that $z=r\beta$ is indeed a critical point; thus the nonnegative function $g$ has the unique critical point
\beq\label{critpt}
z=\frac{\beta}{1-\sqrt{1-|\beta|^2}}
\eeq
in $\{z:|z|>1\}$.  By equation \eqref{formh'}, at this point
\begin{align}\label{maxg}
g(z)&=\frac{|z|^2-1}{|h(z)|}\nn \\
	&=\frac{2|z|}{|h'(z)|}\nn  \\
	&=\frac{2|z|}{|2z-s|}\nn \\
	&=\left|1-\frac{s}{2\beta}(1-\sqrt{1-|\beta|^2})\right|\inv  \nn \\
	&=\left| 1-\frac{\half s\bar\beta}{1+\sqrt{1-|\beta|^2}}  \right|\inv.
\end{align}
We claim that $g(z) > 1$.  For any $w\in\C$,
\[
|1-w|<1 \Leftrightarrow \re (1/w) > \half.
\]
Thus
\begin{align*}
g(z)>1 &\Leftrightarrow \re \frac{2\beta}{s(1-\sqrt{1-|\beta|^2})} > \half \\
	&\Leftrightarrow\frac{\beta}{s}+\frac{\bar\beta}{\bar s} > \half (1-\sqrt{1-|\beta|^2})\\
	&\Leftrightarrow\beta(\bar\beta+\beta\bar p)+\bar\beta(\beta+\bar\beta p)> \half|s|^2(1-\sqrt{1-|\beta|^2})\\
	&\Leftrightarrow  4\re(\bar\beta^2 p)+4|\beta|^2 > |\beta+\bar\beta p|^2(1-\sqrt{1-|\beta|^2})\\
	&\Leftrightarrow  4\re(\bar\beta^2 p)+4|\beta|^2 > (|\beta|^2+|\beta p|^2+2\re(\bar\beta^2p))(1-\sqrt{1-|\beta|^2})\\
	& \Leftrightarrow  2(1+\sqrt{1-|\beta|^2})\re (\bar\beta^2p) +(3+\sqrt{1-|\beta|^2})|\beta|^2 > (1-\sqrt{1-|\beta|^2})|\beta p|^2.
\end{align*}
Let $\beta= \omega \cos\theta$ where $\omega\in\T$ and $0<\theta< \half\pi$ (recall that $\beta\neq 0$).  Then
\begin{align*}
g(z)>1 & \Leftrightarrow  2(1+\sin\theta)\cos^2\theta\re(\bar\omega^2p) + (3+\sin\theta)\cos^2\theta >(1-\sin\theta)\cos^2\theta |p|^2 \\
& \Leftrightarrow   3+\sin\theta - (1-\sin\theta)|p|^2+2(1+\sin\theta)\re(\bar\omega^2p) >0.
\end{align*}
Since $\re(\bar\omega^2p) \geq -|p|$, in order to conclude that $g(z)>1$ we need only show that 
\[
3+\sin\theta - (1-\sin\theta)|p|^2 - 2(1+\sin\theta)|p| >0.
\]
But
\begin{align*}
3+\sin\theta - (1-\sin\theta)|p|^2 - 2(1+\sin\theta)|p|&=3-2|p|-|p|^2+(1-2|p|+|p|^2)\sin\theta \\
	&=(1-|p|)(3+|p|+(1-|p|)\sin\theta)\\
	&>0.
\end{align*}
Hence $g(z)>1$ as claimed.   Since $g =0$ on $\T$ and $g(z)\to 1$ as $z\to\infty$, it follows that the unique critical point $z=r\beta$ of $g$ in $\{z:|z|>1\}$ is a global maximum for $g$, and so the maximum $\kappa(s,p)$ of $g$ on $\{z:|z|>1\}$ is indeed given by the value \eqref{formK}, as required.  Moreover, on rewriting the critical point given by equation \eqref{critpt} in terms of the original variable $z\in\D$, we find that the maximum of  $\frac{1-|z|^2}{|1-sz+pz^2|}$ over $z\in\D$ is attained uniquely at
\[
z=\frac{1-\sqrt{1-|\beta|^2}}{\beta}=\frac{\bar\beta}{1+\sqrt{1-|\beta|^2}}.
\]
\end{proof}
On combining Propositions \ref{mucriter} and  \ref{valsup}  we obtain the following description.
\begin{proposition}\label{propmuE}
For any matrix $A=\bbm a_{ij} \ebm \in \mat$,
\[
\mu_E(A) < 1 \quad \mbox{ if and only if } \quad (s,p)\in\G \mbox{ and }|a_{21}| < \left|1-\frac{\half s\bar\beta}{1+\sqrt{1-|\beta|^2}}\right|
\]
where $s=\tr A, \,  p=\det A$ and $\beta=(s-\bar sp)/(1-|p|^2)$.
\end{proposition}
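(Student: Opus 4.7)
The proposition is a direct synthesis of the two preceding results, so the plan is almost purely logical rather than computational.

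First, I invoke Proposition \ref{mucriter}, which rewrites the condition $\mu_E(A) < 1$ as the conjunction of $(s,p)\in\G$ and
\[
|a_{21}|\sup_{z\in\D}\frac{1-|z|^2}{|1-sz+pz^2|} < 1,
\]
that is, $|a_{21}|\kappa(s,p) < 1$ in the notation of Definition \ref{defPsi}. Thus the problem reduces to identifying $\kappa(s,p)$ explicitly in terms of the data $(s,p)$.

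Second, since $(s,p)\in\G$, Theorem \ref{recapG}(3) gives $|p|<1$ and the existence of some $\beta\in\D$ such that $s=\beta+\bar\beta p$. I would check that this $\beta$ is unique and is given by the formula $\beta=(s-\bar sp)/(1-|p|^2)$: the equations $s=\beta+\bar\beta p$ and $\bar s=\bar\beta+\beta\bar p$ form a linear system in $(\beta,\bar\beta)$ with determinant $1-|p|^2 \neq 0$, from which the formula falls out.

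Third, with this $\beta$ identified, Proposition \ref{valsup} yields
\[
\kappa(s,p) = \left|1- \frac{\half s\bar\beta}{1+\sqrt{1-|\beta|^2}}\right|^{-1},
\]
and in particular this expression is strictly positive. The inequality $|a_{21}|\kappa(s,p) < 1$ is therefore equivalent to
\[
|a_{21}| < \left|1- \frac{\half s\bar\beta}{1+\sqrt{1-|\beta|^2}}\right|,
\]
which is exactly the stated condition. There is no real obstacle here: the substantive work (the $\mu$-criterion and the extremal calculation for $\kappa$) has been done in Propositions \ref{mucriter} and \ref{valsup}, and the only mildly delicate point is verifying that the $\beta$ appearing in Theorem \ref{recapG}(3) coincides with the explicit expression $(s-\bar sp)/(1-|p|^2)$ used in the proposition's statement.
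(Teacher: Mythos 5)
Your proposal is correct and follows the paper's own route: the paper obtains this proposition precisely by combining Proposition \ref{mucriter} with the formula for $\kappa(s,p)$ in Proposition \ref{valsup}. Your extra verification that the $\beta$ of Theorem \ref{recapG}(3) is unique and equals $(s-\bar sp)/(1-|p|^2)$ is a sensible (if routine) addition that the paper leaves implicit.
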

\begin{corollary}\label{altC}
The domain $\PP_\mu$ of Definition {\rm \ref{defCcc}}  satisfies
\beq\label{descrCcc}
\PP_\mu= \left\{(a,s,p): (s,p)\in\G \mbox{ and } |a| < \left| 1-\frac{\half s\bar\beta}{1+\sqrt{1-|\beta|^2}}\right| \right\}
\eeq
where $\beta=(s-\bar sp)/(1-|p|^2)$.
\end{corollary}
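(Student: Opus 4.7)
The corollary is essentially a bookkeeping step that repackages Proposition \ref{propmuE} in terms of the variables $(a,s,p)$ rather than in terms of a matrix $A$. The plan is to prove the two inclusions defining the set equality.

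For the forward inclusion $\PP_\mu \subset \{\cdots\}$, I would start from a point $(a,s,p) \in \PP_\mu$, which by Definition \ref{defCcc} means there exists $A = \bbm a_{ij}\ebm \in \mat$ with $\mu_E(A) < 1$ and $\pi(A) = (a_{21},\tr A,\det A) = (a,s,p)$. Applying Proposition \ref{propmuE} to this $A$ immediately gives $(s,p) \in \G$ and $|a| = |a_{21}| < |1 - \tfrac{\half s \bar\beta}{1 + \sqrt{1-|\beta|^2}}|$, which is the membership condition for the right-hand side of \eqref{descrCcc}. Note that $\beta$ depends only on $s$ and $p$, so there is no ambiguity.

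For the reverse inclusion, I would take $(a,s,p)$ satisfying $(s,p)\in\G$ and the displayed inequality on $|a|$, and exhibit a matrix $A\in\mat$ with $\pi(A) = (a,s,p)$ and $\mu_E(A) < 1$. Since $(s,p)\in\G$, by Theorem \ref{recapG} we may write $(s,p) = (\la_1+\la_2,\la_1\la_2)$ with $\la_1,\la_2 \in \D$; then the lower triangular matrix
\[
A = \bbm \la_1 & 0 \\ a & \la_2 \ebm
\]
satisfies $\tr A = s$, $\det A = p$, $a_{21} = a$, so $\pi(A) = (a,s,p)$. Proposition \ref{propmuE} applied to this particular $A$ now says that $\mu_E(A) < 1$ is equivalent to the very inequality we assumed, so indeed $A \in \B_\mu$ and hence $(a,s,p) \in \PP_\mu$.

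There is no real obstacle here; the only thing to observe is that by Corollary \ref{dependsonly} the value of $\mu_E$ depends only on the coordinates $(a_{21},\tr A,\det A)$, so the choice of matrix representative above is irrelevant, and the characterization in Proposition \ref{propmuE} genuinely descends to a characterization of the quotient domain $\PP_\mu$ in $\C^3$. The content of the corollary is therefore just the identification of the defining inequality $|a| < \kappa(s,p)^{-1}$ with the closed-form expression from Proposition \ref{valsup}.
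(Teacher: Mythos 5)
Your proposal is correct and follows the same route the paper intends: the corollary is an immediate repackaging of Proposition \ref{propmuE} (itself the combination of Propositions \ref{mucriter} and \ref{valsup}), with the only extra content being that every $(a,s,p)$ with $(s,p)\in\G$ lies in the range of $\pi$, which your explicit lower triangular matrix $\bbm \la_1 & 0 \\ a & \la_2 \ebm$ supplies. The paper leaves this surjectivity point implicit (via Definition \ref{defCcc} and Corollary \ref{dependsonly}), so your write-up simply makes the omitted bookkeeping explicit.
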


\section{The domains $\mathcal P$ and $\ccc_\mu$}\label{twodomains}

The purpose of this section is to show that $\PP=\PP_\mu$ and to give criteria for membership of the domain.  One inclusion is easy.
\begin{proposition}\label{CandC1}
$\PP\subset\PP_\mu.$
\end{proposition}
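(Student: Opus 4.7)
The plan is to prove the inclusion directly from the definition of $\mu_E$, using nothing more than the elementary fact that $\mu_E$ is dominated by the operator norm on $\mat$. By Definition \ref{defCcc}, a point $(a_{21},\tr A,\det A)$ lies in $\PP_\mu$ as soon as $\mu_E(A)<1$, so it suffices to show that every $A\in\B$ satisfies $\mu_E(A)<1$.

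First I would fix $A\in\B$ and consider an arbitrary $X\in E$ for which $\det(1-AX)=0$. Since $1$ is then an eigenvalue of $AX$, the spectral radius of $AX$ is at least $1$, and hence $\|AX\|\geq 1$. Submultiplicativity of the operator norm gives $\|A\|\,\|X\|\geq\|AX\|\geq 1$, so
\[
\|X\|\;\geq\;\frac{1}{\|A\|}\;>\;1.
\]

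Next I would take the infimum over all such $X$ in the definition \eqref{defmu} of $\mu_E$, which yields
\[
\frac{1}{\mu_E(A)}\;=\;\inf\{\|X\|:X\in E,\ \det(1-AX)=0\}\;\geq\;\frac{1}{\|A\|}\;>\;1,
\]
with the convention that if no such $X$ exists then $\mu_E(A)=0$. Either way, $\mu_E(A)\leq\|A\|<1$, so $A\in\B_\mu$, and therefore $\pi(A)\in\PP_\mu$. Ranging over all $A\in\B$ gives $\PP=\pi(\B)\subset\PP_\mu$.

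I do not anticipate any real obstacle here: the proof is a one-line consequence of the definition of $\mu_E$, and its sole ingredient is that the spectral radius is bounded by the operator norm. The genuine content of the identification $\PP=\PP_\mu$ lies in the reverse inclusion, which is why this proposition is singled out as the easy half.
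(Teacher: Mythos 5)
Your proof is correct and follows essentially the same route as the paper, which simply cites the standard inequality $\mu_E\leq\|\cdot\|$ and applies it to a preimage $A\in\B$ of the given point; you merely supply the (correct) one-line verification of that inequality via the spectral radius and submultiplicativity.
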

\begin{proof}
Consider $(a,s,p)\in \PP$ and pick $A=\bbm a_{ij} \ebm \in\mat$ such that $\|A\|<1, \ \ a_{21}=a, \ \tr A=s,  \ \det A=p$.  Since $\mu_E \leq \|\cdot\|$ for all subspaces $E$ of $\mat$ we have $\mu_E(A) < 1$, and hence, by Definition \ref{defCcc}, $(a,s,p)\in\PP_\mu$.
\end{proof}

The next result provides characterizations of points in $\ccc$ and asserts that $\ccc=\ccc_\mu$.
\begin{theorem}\label{4equiv}
Let
\beq\label{las2}
(s,p)=(\beta+\bar\beta p,p) = (\la_1+\la_2,\la_1\la_2)\in\G
\eeq
and let $a\in\C$.
The following statements are equivalent.
\begin{enumerate}
\item $(a,s,p)\in\PP$;
\item $(a,s,p) \in \PP_\mu$;
\item $|a|<\left|1- \frac{\half s\bar\beta}{1+\sqrt{1-|\beta|^2}}\right|$;
\item $|a| < \half|1-\bar\la_2\la_1|+\half (1-|\la_1|^2)^\half(1-|\la_2|^2)^\half$;
\item $\sup_{z\in\D} \left|\Psi_z(a,s,p)\right|  < 1$.
\end{enumerate}
\end{theorem}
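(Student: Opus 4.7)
The plan is to close a cycle of implications by combining four links that are already established with one new algebraic identity. Proposition~\ref{CandC1} gives $(1)\Rightarrow(2)$; Corollary~\ref{altC} gives $(2)\Leftrightarrow(3)$; Propositions~\ref{mucriter} and~\ref{valsup}, together with Definition~\ref{defPsi}, give $(3)\Leftrightarrow(5)$ via the observation that $\sup_{z\in\D}|\Psi_z(a,s,p)|=|a|\kappa(s,p)$ and that $1/\kappa(s,p)$ is exactly the right-hand side of (3); and Proposition~\ref{asuffCondn} is precisely $(4)\Rightarrow(1)$. So the only missing link is $(3)\Leftrightarrow(4)$.

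To prove $(3)\Leftrightarrow(4)$, it suffices to show the pointwise identity of the two upper bounds on $|a|$:
\[
R_3:=\left|1-\frac{\half s\bar\beta}{1+\sqrt{1-|\beta|^2}}\right|
\;=\;
\half|1-\bar\la_2\la_1|+\half(1-|\la_1|^2)^\half(1-|\la_2|^2)^\half =: R_4.
\]
Both quantities are nonnegative, so it is enough to compare their squares. Writing $t=\sqrt{1-|\beta|^2}$, $D=1-|p|^2$, and $\Lambda=(1-|\la_1|^2)^\half(1-|\la_2|^2)^\half$, I would expand
\[
R_3^2=\frac{|\,2(1+t)-s\bar\beta\,|^2}{4(1+t)^2}
\]
and exploit the defining relation $s=\beta+\bar\beta p$ (which forces $2\re(s\bar\beta)=|s|^2+(1-t^2)D$) to cancel a factor of $(1+t)$ from numerator and denominator, yielding $R_3^2=1-\tfrac14|s|^2-\half(1-t)D$. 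On the other side, setting $c_\pm:=\half|1-\bar\la_2\la_1|\pm\half\Lambda$ one has $R_4=c_+$, and the identities $c_-^2+c_+^2=1-\half|s|^2+|p|^2$ and $c_-c_+=\quart|\la_1-\la_2|^2$ verified in the proof of Proposition~\ref{asuffCondn} yield $R_4^2=\half-\quart|s|^2+\half|p|^2+\half|1-\bar\la_2\la_1|\Lambda$.

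Equating $R_3^2$ and $R_4^2$ and simplifying collapses the whole question to the single identity
\[
tD\;=\;|1-\bar\la_2\la_1|\,\Lambda,
\]
which no longer mentions $\beta$ and is symmetric in $\la_1,\la_2$. Squaring and expanding both sides in terms of $|s|^2,|p|^2,q:=|\la_1|^2+|\la_2|^2$, and $\re(s^2\bar p)$, the equality reduces further to the polynomial identity $2\re(s^2\bar p)=q(|s|^2-q)+4|p|^2$, which follows at once from $|s|^2-q=2\re(\bar\la_1\la_2)$ together with a direct expansion of $s^2\bar p=(\la_1^2+2p+\la_2^2)\bar\la_1\bar\la_2$.

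The main obstacle is the identity $R_3=R_4$: its left-hand side is expressed in the auxiliary variable $\beta$, while its right-hand side lives in $\la_1,\la_2$, so some genuine algebraic manipulation is unavoidable. The decisive simplification is the reduction to $tD=|1-\bar\la_2\la_1|\Lambda$, because this eliminates $\beta$ entirely and casts the problem into the symmetric variables of $\la_1$ and $\la_2$, after which verification is a short computation. Once $R_3=R_4$ is established, combining it with the four already-known implications yields the full cycle of equivalences.
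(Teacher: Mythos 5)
Your proposal is correct and follows essentially the same route as the paper: the cycle of implications is closed using the same auxiliary results (Propositions \ref{CandC1}, \ref{mucriter}, \ref{valsup} / Corollary \ref{altC}, and Proposition \ref{asuffCondn}), and the only genuinely new step — equality of the bounds in (3) and (4) — rests on the same key identity $\sqrt{1-|\beta|^2}\,(1-|p|^2)=|1-\bar\la_2\la_1|\,\Lambda$, which is the paper's equation \eqref{p-beta-la}. The paper packages that algebra slightly differently (multiplying through by $(1+\sqrt{1-|\beta|^2})(1-|p|^2)$ and exhibiting both sides as $\tfrac12\bigl|\zeta(1-|\la_1|^2)^{1/2}+\bar\zeta(1-|\la_2|^2)^{1/2}\bigr|^2$ with $\zeta^2=1-\bar\la_2\la_1$, instead of squaring and cancelling as you do), but the substance is the same and your computations check out.
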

\begin{proof}
We shall show that (1) $\Rightarrow$ (2) $\Rightarrow$ (5) $\Rightarrow$ (3) $\Rightarrow$ (4) $\Rightarrow$ (1).
Indeed, (1) $\Rightarrow$ (2) is Proposition \ref{CandC1} while (2) $\Leftrightarrow$ (5) is Proposition \ref{mucriter}.

\noindent (5) $\Rightarrow$ (3)  If (5) holds then (see Definition \ref{defPsi}) $|a|\kappa(s,p) <1$ and hence, by Proposition \ref{valsup}, (3) holds.

\noindent (3) $\Rightarrow$ (4)  We shall show that the right hand sides in (3) and (4) are equal, that is,
\beq\label{toshow}
\half|1-\bar\la_2\la_1|+\half \La =
\left|1- \frac{\half s\bar\beta}{1+\sqrt{1-|\beta|^2}}\right|
\eeq
where
\[
\Lambda=(1-|\la_1|^2)^\half(1-|\la_2|^2)^\half.
\]
Let $L,\ R$ denote the left and right hand sides respectively of equation \eqref{toshow} and  let
\[
L_1=L(1+\sqrt{1-|\beta|^2})(1-|\la_1\la_2|^2), \qquad R_1=R(1+\sqrt{1-|\beta|^2})(1-|\la_1\la_2|^2).
\]
Since
\[
\beta=\frac{s-\bar sp}{1-|p|^2}=\frac{\la_1(1-|\la_2|^2)+\la_2(1-|\la_1|^2)}{1-|\la_1\la_2|^2},
\]
we find that
\begin{align}\label{beta}
1-|\beta|^2 &= \frac{(1-|\la_1\la_2|^2)^2 - | \la_1(1-|\la_2|^2)+\la_2(1-|\la_1|^2)|^2}{(1-|\la_1\la_2|^2)^2}\nn \\
&= \frac{1-2|\la_1\la_2|^2 + |\la_1\la_2|^4} {(1-|\la_1\la_2|^2)^2} \nn \\
&\hspace*{0.5cm}+\frac{-\{| \la_1|^2 (1-|\la_2|^2)^2+|\la_2|^2(1-|\la_1|^2)^2 + 2(1-|\la_2|^2)(1-|\la_1|^2) \re(\bar\la_2\la_1) \} }{(1-|\la_1\la_2|^2)^2} \nn \\
&= \frac{(1-|\la_2|^2)(1-|\la_1|^2)}{(1-|\la_1\la_2|^2)^2} \{ 1 + |\la_1\la_2|^2 - 2 \re(\bar\la_2\la_1)\} \nn \\
&= \frac{|1-\bar\la_2\la_1|^2 \La^2}{(1-|\la_1\la_2|^2)^2} 
\end{align}
Thus 
\beq\label{p-beta-la}
\sqrt{1-|\beta|^2}=\frac{|1-\bar\la_2\la_1|\La}{1-|\la_1\la_2|^2}.
\eeq
Hence 
\begin{align}\label{Lex}
L_1&= \half(|1-\bar\la_2\la_1|+\La)(1-|\la_1\la_2|^2+|1-\bar\la_2\la_1|\La) \nn \\
	&=\half|1-\bar\la_2\la_1|\left(1-|\la_1\la_2|^2+(1-|\la_1|^2)(1-|\la_2|^2)\right) \nn \\
	&\hspace*{2cm}+\half\La\left(|1-\bar\la_2\la_1|^2+1-|\la_1\la_2|^2\right) \nn \\
	&=\half|1-\bar\la_2\la_1|(2-|\la_1|^2-|\la_2|^2) + \La(1-\re(\bar\la_2\la_1))
\end{align}
Now let $\zeta$ be a square root of $1-\bar\la_2\la_1$: we find that equation \eqref{Lex} may be written
\beq\label{Lex2}
L_1=L(1+\sqrt{1-|\beta|^2})(1-|\la_1\la_2|^2)= \half\left|\zeta(1-|\la_1|^2)^\half+\bar\zeta(1-|\la_2|^2)^\half\right|^2.
\eeq
Next we express $R_1$ in terms of $\la_1$ and $\la_2$.  Observe that
\begin{align*}
s(\bar s - s \bar p) &=(\la_1+\la_2)(\bar\la_1\left(1-|\la_2|^2)+\bar\la_2(1-|\la_1|^2)\right) \\
	&=|\la_1|^2+|\la_2|^2-2|\la_1\la_2|^2+(1-|\la_1|^2)(1-\zeta^2)+(1-|\la_2|^2)(1-\bar\zeta^2)\\
	&=2-2|\la_1\la_2|^2 -(1-|\la_1|^2)\zeta^2-(1-|\la_2|^2)\bar\zeta^2.
\end{align*}
Thus
\begin{align}\label{Rex}
R_1 &=(1-|\la_1\la_2|^2)\left|1+\sqrt{1-|\beta|^2}-\half s\bar\beta\right| \nn \\
	&=\left|1-|\la_1\la_2|^2 +|1-\bar\la_2\la_1|\La -\half s(\bar s- s\bar p)\right| \nn \\
	&=\left|1-|\la_1\la_2|^2 +|1-\bar\la_2\la_1|\La -\half   (2-2|\la_1\la_2|^2 -(1-|\la_1|^2)\zeta^2-(1-|\la_2|^2)\bar\zeta^2)    \right| \nn \\
	&=\half \left| 2|\zeta|^2\La +(1-|\la_1|^2)\zeta^2+(1-|\la_2|^2)\bar\zeta^2\right| \nn \\
	&=\half \left|\zeta(1-|\la_1|^2)^\half + \bar\zeta(1-|\la_2|^2)^\half\right|^2 \nn \\
	&=L_1. \nn
\end{align}
Hence $L=R$ and so (3) $\Leftrightarrow$ (4).

\noindent (4) $\Rightarrow$ (1) is Proposition \ref{asuffCondn}.  Hence all five conditions are equivalent.
\end{proof}

There is an analogue of Theorem \ref{4equiv} for the closures of $\ccc$ and $\ccc_\mu$.  Note that by \cite[Theorem 1.1]{AYmodel}, $(s,p)\in\Gamma$ if and only if $|p|\leq 1$ and there exists $\beta\in\C$ such that $|\beta|\leq 1$ and $s=\beta+\bar\beta p$.  In the case that  $(s,p)\in\Gamma$ and $|p|=1$ then $s=\beta+\bar\beta p$ where $\beta=\half s$.
Indeed,  $(s,p)=(\la_1+\la_2,\la_1\la_2)\in\Gamma$ and $\la_1,\la_2\in\T$.  Hence $s=\bar s p$.    Let $\beta=\half s$.  Then $\beta+\bar\beta p= \half s+ \half \bar s p =s$. (Infinitely many other choices of $\beta$ are also possible when $|p|=1$.)

Observe also that if $(s,p)\in\Gamma$ and  $z\in\D$ then $1-sz+pz^2\neq 0$.
\begin{theorem}\label{4equivClosure}
Let
\beq\label{las2bis}
(s,p)=(\beta+\bar\beta p,p) = (\la_1+\la_2,\la_1\la_2)\in\Gamma
\eeq
where $|\beta| \leq 1$  and if $|p|=1$ then $\beta=\half s$.  Let $a\in\C$.
The following statements are equivalent.
\begin{enumerate}
\item $(a,s,p)\in\bar\PP$;
\item $(a,s,p) \in \bar\PP_\mu$;
\item $|a|\leq\left|1- \frac{\half s\bar\beta}{1+\sqrt{1-|\beta|^2}}\right|$;
\item $|a| \leq \half|1-\bar\la_2\la_1|+\half (1-|\la_1|^2)^\half(1-|\la_2|^2)^\half$;
\item $ \left|\Psi_z(a,s,p)\right|  \leq 1$ for all $z\in\D$; \blue
\item there exists $A\in\mat$ such that $\|A\|\leq 1$ and $\pi(A)=(a,s,p)$;
\item there exists $A\in\mat$ such that $\mu_E(A)\leq 1$ and $\pi(A)=(a,s,p)$. \black
\end{enumerate}
\end{theorem}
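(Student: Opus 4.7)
The plan is to establish the cycle
\[
(1) \Leftrightarrow (6) \Rightarrow (7) \Rightarrow (2) \Rightarrow (5) \Leftrightarrow (3) \Leftrightarrow (4) \Rightarrow (1),
\]
so that all seven conditions are equivalent. Most of the links are short consequences of Theorem \ref{4equiv} together with a scaling or continuity argument, once the boundary case $|p|=1$ is dispatched.

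The equivalence (1) $\Leftrightarrow$ (6) is the identity $\bar\PP = \pi(\bar\B)$ already noted in the paper, which follows from compactness of $\bar\B$ and continuity of $\pi$ in one direction, and from the scaling $A\mapsto rA$, $r\nearrow 1$, in the other. The step (6) $\Rightarrow$ (7) is immediate since $\mu_E \leq \|\cdot\|$. For (7) $\Rightarrow$ (2), if $\mu_E(A)\leq 1$ then $\mu_E(rA)=r\mu_E(A)<1$ for $r\in(0,1)$, so $\pi(rA)\in\PP_\mu$, and letting $r\nearrow 1$ yields $(a,s,p)\in\bar\PP_\mu$. For (2) $\Rightarrow$ (5), fix $z\in\D$; the map $(a,s,p)\mapsto|\Psi_z(a,s,p)|$ is continuous on $\C\times\Gamma$ (since $1-sz+pz^2\neq 0$ there), so the strict inequality $<1$ that holds throughout $\PP_\mu$ by Theorem \ref{4equiv} passes to $\leq 1$ in the closure. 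Finally, (4) $\Rightarrow$ (1) is exactly Proposition \ref{anothersuff}.

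The remaining equivalences (5) $\Leftrightarrow$ (3) $\Leftrightarrow$ (4) extend Proposition \ref{valsup} and the algebraic identity $L=R$ from the proof of Theorem \ref{4equiv} to the boundary of $\G$. When $(s,p)\in\G$ they are precisely those results. The main obstacle is the case $|p|=1$, where $\beta$ is not uniquely determined by $(s,p)$ and the original derivations involve expressions of the form $0/0$. Under the convention $\beta=\half s$, the facts $\la_1,\la_2\in\T$ and $s=\bar sp$ give $|1-\bar\la_2\la_1|=2\sqrt{1-|\beta|^2}=2\sqrt{1-|s|^2/4}$ and $\Lambda=0$, from which a direct calculation reduces the right-hand sides of both (3) and (4) to $\sqrt{1-|s|^2/4}$; the corresponding supremum in (5) is verified by passing to the limit $r\nearrow 1$ in Proposition \ref{valsup} applied to $(rs,r^2p)\in\G$, after noting that the associated $\beta_r=(rs-r^3\bar sp)/(1-r^4)$ tends to $\half s$. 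These reconciliations at the boundary close the cycle and complete the proof.
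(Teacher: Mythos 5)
Your overall architecture mirrors the paper's: (1)$\Leftrightarrow$(6)$\Rightarrow$(7) by compactness and $\mu_E\le\|\cdot\|$, a scaling step into $\bar\PP_\mu$, the chain through (5), (3), (4), and Proposition \ref{anothersuff} to close the cycle; the links (1)$\Leftrightarrow$(6)$\Rightarrow$(7)$\Rightarrow$(2)$\Rightarrow$(5) are fine (your rerouting of (7) into (2) rather than (1), and the continuity argument for (2)$\Rightarrow$(5), are harmless variants). The genuine gap is in your case analysis for (5)$\Rightarrow$(3)$\Rightarrow$(4): you treat only $(s,p)\in\G$ and $|p|=1$, but $\Gamma\setminus\G$ also contains every point $(\la_1+\la_2,\la_1\la_2)$ with exactly one of $\la_1,\la_2$ on $\T$, and there $|p|<1$. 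At such a point $\beta=(s-\bar sp)/(1-|p|^2)$ is still uniquely determined, but $|\beta|=1$ and $\Lambda=0$, so neither Theorem \ref{4equiv} nor Proposition \ref{valsup} (which assumes $\beta\in\D$, and whose maximizer $\bar\beta/(1+\sqrt{1-|\beta|^2})=\bar\beta$ would lie on $\T$) applies verbatim; your phrase ``precisely those results'' does not cover this stratum. The equality of the right-hand sides of (3) and (4) does extend to all $|p|<1$, since the algebra in the proof of Theorem \ref{4equiv} uses only $|\la_1\la_2|<1$ (indeed here $\beta=\la_1\in\T$, say, and both sides reduce to $\half|1-\bar\la_2\la_1|$), but you still owe the implication (5)$\Rightarrow$(3) at these points.

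A second, related weakness concerns the step you do give at $|p|=1$: ``passing to the limit $r\nearrow1$ in Proposition \ref{valsup} applied to $(rs,r^2p)$'' computes $\lim_r\kappa(rs,r^2p)$, but the direction your chain needs, (5)$\Rightarrow$(3), requires a \emph{lower} bound on $\kappa(s,p)$ itself, i.e.\ near-extremal points $z\in\D$ for the boundary pair $(s,p)$; semicontinuity of the supremum of the continuous functions $(1-|z|^2)/|1-sz+pz^2|$ only yields $\kappa(s,p)\le\liminf_r\kappa(rs,r^2p)$, which is the direction relevant to (3)$\Rightarrow$(5). Both this and the missing stratum are repaired by one direct computation: on $\Gamma\setminus\G$ we may take $\la_1\in\T$, so $1-sz+pz^2=(1-\la_1z)(1-\la_2z)$, and along $z=t\bar\la_1$, $t\to1^-$,
\[
\frac{1-|z|^2}{|1-sz+pz^2|}=\frac{1+t}{|1-t\bar\la_1\la_2|}\longrightarrow \frac{2}{|1-\bar\la_1\la_2|}
\]
(interpreted as $\infty$ when $\la_1=\la_2$). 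Hence $\kappa(s,p)\ge\left(\half|1-\bar\la_2\la_1|\right)^{-1}$, so (5) forces $|a|\le\half|1-\bar\la_2\la_1|=\half|1-\bar\la_2\la_1|+\half\Lambda$, i.e.\ (4), and then (3) via the equality of right-hand sides. With these insertions your cycle closes; the remainder coincides with the paper's proof.
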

\begin{proof}
\blue  (1) $\Rightarrow$ (6) Suppose (1).  Pick a sequence $x_n \in\ccc$ such that $x_n \to (a,s,p)$ and then, for every $n$, pick $A_n\in\B$ such that $\pi(A_n)=x_n$.  Pass to a convergent subsequence of $(A_n)$, with limit $A\in\bar\B$.  Then
\[
\pi(A)= \lim \pi(A_n)= \lim x_n = (a,s,p).
\]
Thus (6) holds.

(6) $\Rightarrow$ (7) is immediate from the fact that $\mu_E(A) \leq \|A\|$ for all $A\in\mat$.

(7) $\Rightarrow$ (1)  Let $A$ be as in (7). For any $r\in (0,1)$ we have $\mu_E(rA) < 1$ and $\pi(rA)=(ra,rs,r^2p)$.  By Theorem \ref{4equiv} $(ra,rs,r^2p) \in \ccc$.  Let $r\to 1$ to conclude that $(a,s,p) \in\bar\ccc$.
\black

Having proved (1), (6) and (7) equivalent we again show that (1)  $\Rightarrow$ (2)
$\Rightarrow$ (5) $\Rightarrow$ (3) $\Rightarrow$ (4) $\Rightarrow$ (1).
As above, (1) $\Rightarrow$ (2) is immediate from Proposition \ref{CandC1} while (2) $\Leftrightarrow$ (5) follows from Proposition \ref{mucriter}.

(5) $\Rightarrow$ (3)  If (5) holds then $|a|\kappa(s,p) \leq 1$ and so, by Proposition \ref{valsup}, (3) holds.

(3) $\Rightarrow$ (4)  Suppose (3).  If $|p|<1$ then the right hand sides in conditions (3) and (4) are equal by the argument in the proof of Theorem \ref{4equiv}.  Suppose therefore that $|p|=1$.  By hypothesis $\beta=\half s$ and
\begin{align*}
|a| &\leq \left| 1- \frac{\quart|s|^2}{1+\sqrt{1-\quart|s|^2}} \right|\\
	&=\sqrt{1-\quart|s|^2}.
\end{align*}
The right hand side of (4) is
\begin{align*}
\half|1-\bar\la_2\la_1|=\half|\la_1-\la_2|=\half |s^2-4p|^\half=|\quart s(s\bar p )-1|^\half=\sqrt{1-\quart|s|^2}.
\end{align*}
Once again the right hand sides in (3) and (4) are equal, and so (3) $\Leftrightarrow$ (4).

(4) $\Rightarrow$  (1) is contained in Proposition \ref{anothersuff}.
\end{proof}

\section{Elementary geometry of the pentablock}\label{elemGeom} 

In this section we give some basic geometric properties of the pentablock $\PP$ and its closure. 

\begin{theorem} \label{not-convex} Neither
$\PP$ nor $\bar\PP$ is convex.
\end{theorem}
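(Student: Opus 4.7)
The plan is to exploit the fact that $\PP$ (respectively $\bar\PP$) fibres over the symmetrised bidisc $\G$ (respectively over $\Gamma$) via the projection $(a,s,p)\mapsto(s,p)$: every matrix $A\in\B$ has its eigenvalues in $\D$, so $(\tr A,\det A)\in\G$, and by continuity the same holds for $\bar\B$ and $\Gamma$. Consequently, to refute convexity it suffices to produce two points in $\PP$ (respectively $\bar\PP$) whose midpoint has $(s,p)$-component lying outside $\G$ (respectively outside $\Gamma$).

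First I would observe that $\G$ itself is not convex by exhibiting an explicit pair of points whose midpoint escapes. Fix $r\in(1/\sqrt2,1)$ and consider the scalar matrices $A_1=rI$ and $A_2=\ii rI$. Then $\|A_j\|=r<1$, the eigenvalues are $(r,r)$ and $(\ii r,\ii r)$ respectively, and
\[
(s_1,p_1)=(2r,r^2)\in\G, \qquad (s_2,p_2)=(2\ii r,-r^2)\in\G.
\]
Their midpoint is $(r(1+\ii),0)$. If this belonged to $\G$, one could write it as $(\la_1+\la_2,\la_1\la_2)$ with $|\la_1|,|\la_2|<1$; but $\la_1\la_2=0$ forces one of them to vanish and the other to equal $r(1+\ii)$, whose modulus $r\sqrt2$ exceeds $1$.

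Next I would lift this to the pentablock. Applying $\pi$ to $A_1,A_2\in\B$ yields
\[
(0,2r,r^2),\ (0,2\ii r,-r^2)\in\PP,
\]
whose midpoint $(0,r(1+\ii),0)$ cannot be in $\PP$ because its $(s,p)$-component is not in $\G$. This settles non-convexity of $\PP$. For $\bar\PP$ I would repeat the construction with $r=1$: the matrices $I,\ii I\in\bar\B$ give $(0,2,1),(0,2\ii,-1)\in\bar\PP$, their midpoint $(0,1+\ii,0)$, and the same calculation shows $(1+\ii,0)\notin\Gamma$ since $r\sqrt2=\sqrt2>1$.

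There is really no obstacle here: the argument reduces to picking two diagonal matrices in $\B$ (or $\bar\B$) whose $\pi$-images have $a_{21}=0$, so the potentially delicate $a$-coordinate drops out and non-convexity of $\PP$ inherits directly from the well-known non-convexity of $\G$.
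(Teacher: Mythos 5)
Your proof is correct and takes essentially the same route as the paper, which uses exactly the points $(0,2,1)$ and $(0,2\ii,-1)$ in $\bar\PP$ with midpoint $(0,1+\ii,0)$ whose $(s,p)$-component escapes $\Gamma$. Your only addition is the scaled pair $(0,2r,r^2)$, $(0,2\ii r,-r^2)$ with $1/\sqrt2<r<1$, which handles the open pentablock directly instead of deducing its non-convexity from that of its closure — a slightly more explicit treatment, but the same underlying argument.
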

\begin{proof}
If $x=(0,2,1)= (0,1+1,1\cdot 1)$ and $y=(0,2i, -1)=(0,i+i,i \cdot i)$ then $x,y \in \bar\PP$, but the mid-point of these two points is
$\tfrac 12 (x+y)= (0,1+i,0) \notin \bar\PP$. Thus $\PP$ is not convex.
\end{proof}

However, $\bar\PP$ is contractible by virtue of the following result.
\begin{theorem} \label{thm-starlike}
$\PP$ and $\bar\PP$ are $(1,1,2)$-quasi-balanced and are starlike about $(0,0,0)$, but not  circled. 
\end{theorem}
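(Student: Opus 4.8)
\emph{Overview.} The three assertions are essentially independent; quasi-balancedness and non-circledness are routine, and the substance is in starlikeness.

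\emph{$(1,1,2)$-quasi-balanced, and not circled.} If $(a,s,p)=\pi(A)$ with $A\in\B$ and $|\lambda|\le 1$, then $\lambda A\in\B$ and $\pi(\lambda A)=(\lambda a,\lambda\tr A,\lambda^2\det A)=(\lambda a,\lambda s,\lambda^2 p)$, so $(\lambda a,\lambda s,\lambda^2 p)\in\PP$; running the same argument with $\bar\B$ in place of $\B$ and using $\bar\PP=\pi(\bar\B)$ handles $\bar\PP$. For non-circledness: since $\PP$ and $\bar\PP$ are fibred over $\G$ and $\Gamma$ respectively, it suffices to rotate a point of $\G$ out of $\Gamma$. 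Now $(1.4,0.45)=(0.9+0.5,\,0.9\cdot 0.5)\in\G$, whereas $(-1.4,-0.45)\notin\Gamma$ because $z^2+1.4z-0.45$ has a zero of modulus $>1$; hence for small $a\ne 0$ we have $(a,1.4,0.45)\in\PP$ but $(-a,-1.4,-0.45)\notin\bar\PP$.

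\emph{Starlike about $0$.} By Theorem \ref{4equiv}, $\PP=\{(a,s,p):(s,p)\in\G,\ |a|\,\kappa(s,p)<1\}$, with $\kappa\ge 1$ as in Definition \ref{defPsi}. Fix $(a,s,p)\in\PP$ and $t\in(0,1)$; since $t|a|\,\kappa(ts,tp)\le|a|\,\kappa(s,p)<1$ as soon as $\kappa(ts,tp)\le t^{-1}\kappa(s,p)$, it is enough to prove (i) $(ts,tp)\in\G$ and (ii) $\kappa(ts,tp)\le t^{-1}\kappa(s,p)$. For (i), write $s=\beta+\bar\beta p$ with $\beta\in\D$ (Theorem \ref{recapG}); then $s-\bar sp=\beta(1-|p|^2)$, so, using $|s|\le|\beta|(1+|p|)$ and $|\beta|<1$,
\[
|ts-\overline{ts}\,tp|=t\bigl|\beta(1-|p|^2)+(1-t)\bar sp\bigr|\le t|\beta|(1+|p|)(1-t|p|)<(1-t|p|)(1+t|p|)=1-|tp|^2 ,
\]
whence $(ts,tp)\in\G$ by Theorem \ref{recapG}; the same estimate gives $|\beta_t|\le t|\beta|(1+|p|)/(1+t|p|)\le t(1+|p|)/(1+t|p|)$, where $\beta_t\in\D$ is determined by $ts=\beta_t+\bar\beta_t\,tp$.

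\emph{Proof of (ii).} The device is to evaluate the supremum defining $\kappa(s,p)$ at the point maximising the supremum defining $\kappa(ts,tp)$. Let $z_0\in\D$ be that maximiser (Proposition \ref{valsup}) and put $w_0:=1-tsz_0+tpz_0^2$, so $\kappa(ts,tp)=(1-|z_0|^2)/|w_0|$. Since $1-sz_0+pz_0^2=t^{-1}\bigl(w_0-(1-t)\bigr)$ and $z_0\in\D$,
\[
\kappa(s,p)\ \ge\ \frac{1-|z_0|^2}{|1-sz_0+pz_0^2|}\ =\ \frac{t(1-|z_0|^2)}{|w_0-(1-t)|},
\]
hence $\kappa(s,p)/\kappa(ts,tp)\ge t|w_0|/|w_0-(1-t)|$, and (ii) follows once $|w_0|\ge|w_0-(1-t)|$, i.e.\ once $\re w_0\ge\tfrac12(1-t)$. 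By Proposition \ref{valsup} again, $z_0=\bar\beta_t/(1+\delta_t)$ with $\delta_t=\sqrt{1-|\beta_t|^2}$, and substituting $ts=\beta_t+\bar\beta_t\,tp$ one computes $w_0=\delta_t\bigl(1-t\bar\beta_t^{\,2}p/(1+\delta_t)^2\bigr)$, so that
\[
\re w_0\ \ge\ \delta_t\Bigl(1-\frac{t(1-\delta_t^2)}{(1+\delta_t)^2}\Bigr)\ =\ \frac{\delta_t\bigl((1-t)+\delta_t(1+t)\bigr)}{1+\delta_t}.
\]
Finally the bound $|\beta_t|\le t(1+|p|)/(1+t|p|)$ from (i) forces $\delta_t^2\ge 1-t$ (because $1+t+2t|p|\ge(1+t|p|)^2$, i.e.\ $t|p|^2\le 1$), and $\delta_t^2\ge 1-t$ makes the last displayed quantity $\ge\tfrac12(1-t)$. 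This proves (ii), so $(ta,ts,tp)\in\PP$; as $0\in\PP$, $\PP$ is starlike about $0$, and then $\bar\PP$ is starlike about $0$ as the closure of a starlike set, as well as $(1,1,2)$-quasi-balanced; the example above already shows $\bar\PP$ is not circled.

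\emph{Main obstacle.} Everything but (ii) is bookkeeping. Part (ii) is the genuine point: it says the fibre radii $\rho=1/\kappa$ satisfy $\rho(ts,tp)\ge t\rho(s,p)$, and there is no soft argument, since $\pi$ does not intertwine dilation ($\pi(tA)=(ta,ts,t^2p)$, not $(ta,ts,tp)$). The trick that unlocks it is to test $\kappa(s,p)$ at the maximiser of $\kappa(ts,tp)$, reducing the claim to the scalar estimate $\re w_0\ge\tfrac12(1-t)$, which is then closed using the explicit maximiser of Proposition \ref{valsup} together with the $|\beta_t|$-bound.
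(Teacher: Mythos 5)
Your proof is correct, and for the substantive part (starlikeness) it takes a genuinely different route from the paper's. The paper works with the $\la_1,\la_2$-form of the fibre radius (condition (4) of Theorem \ref{4equiv}), uses the identities \eqref{keepthis} and \eqref{p-beta-la} to reduce the required inequality to the monotonicity of $f(r)=r^{-2}\{1+\sqrt{1-|\beta_r|^2}\,(1-r^2|p|^2)\}$, and then shows $f'(r)<0$ by a calculus computation, quoting \cite[Theorem 2.3]{AY04} for the starlikeness of $\G$ to ensure $|\beta_r|<1$; it never needs to know where the supremum defining $\kappa$ is attained. You instead use the $\kappa$-form (conditions (3)/(5)) and prove the fibre-radius inequality $\kappa(ts,tp)\le t^{-1}\kappa(s,p)$ variationally: testing the supremum defining $\kappa(s,p)$ at the maximiser $z_0$ for $(ts,tp)$ reduces everything to the scalar estimate $\re(1-tsz_0+tpz_0^2)\ge\half(1-t)$, which you close with the explicit maximiser of Proposition \ref{valsup} and a quantitative bound on $|\beta_t|$; along the way you re-prove, rather than cite, that $\G$ is starlike. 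Your route avoids differentiation and is self-contained, at the price of using the location of the maximiser; the paper's route is more computational but uses only the value of the supremum. Two small presentational points, neither a genuine gap: the parenthetical justification of $\delta_t^2\ge 1-t$ is garbled as written — what you need is $t(1+|p|)^2\le(1+t|p|)^2$, which holds because $(1+t|p|)^2-t(1+|p|)^2=(1-t)(1-t|p|^2)\ge 0$ — and the final implication that $\delta_t^2\ge 1-t$ makes your displayed lower bound for $\re w_0$ at least $\half(1-t)$ deserves a line, e.g.\ since $\delta_t\le 1$ one has $\delta_t\bigl((1-t)+\delta_t(1+t)\bigr)/(1+\delta_t)\ge\bigl(\delta_t^2(1-t)+\delta_t^2(1+t)\bigr)/2=\delta_t^2\ge 1-t\ge\half(1-t)$. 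Your quasi-balanced argument coincides with the paper's, and your non-circledness example (rotating $(a,1.4,0.45)$ by $-1$) differs from the paper's choice $(0,2,1)\mapsto(0,2i,i)$ but works for the same reason: the rotated base point leaves $\Gamma$, and $\bar\PP$ is fibred over $\Gamma$.
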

The statement that $\ccc$ is $(1,1,2)$-quasi-balanced means that if $(a,s,p)\in\ccc$ and $z\in\Delta$ then 
$(za,zs,z^2p)\in\ccc$.
\begin{proof}
The quasi-balanced property follows from the fact that, for $A\in\mat$ and $z\in\C$, if $\pi(A)=(a,s,p)$ then
$\pi(zA)=(za,zs,z^2p)$.

Let $x = (a, s,p) \in \PP$ and write $(s,p)= (\la_1+\la_2,\la_1\la_2)\in\G$. By Theorem \ref{4equiv},
 $x\in\PP$ if and only if
\beq\label{a-in-P}
|a| < \half|1-\bar\la_2\la_1|+\half (1-|\la_1|^2)^\half(1-|\la_2|^2)^\half.
\eeq
Let $0 <r <1$ and let $(rs,rp)= (\gamma_1+\gamma_2,\gamma_1\gamma_2),$
so that $\gamma_1, \gamma_2$ are the roots of
\[
\gamma^2 -r s \gamma + r p =0.
\]
To show that $\PP$ is starlike about $(0,0,0)$ we need to show that
\[
|ra| < \half|1-\bar\gamma_2\gamma_1|+\half (1-|\gamma_1|^2)^\half(1-|\gamma_2|^2)^\half.
\]

Suppose it is not true, that is, there exists a choice of $r, a$ such that \eqref{a-in-P} holds, but
$$|a| \ge \frac{1}{2r}\{|1-\bar\gamma_2\gamma_1|+(1-|\gamma_1|^2)^\half(1-|\gamma_2|^2)^\half\}.$$
Thus we have
\beq\label{not-starlike}
\frac{1}{2r}\{|1-\bar\gamma_2\gamma_1|+ (1-|\gamma_1|^2)^\half(1-|\gamma_2|^2)^\half \} <
\half|1-\bar\la_2\la_1|+\half (1-|\la_1|^2)^\half(1-|\la_2|^2)^\half.
\eeq 
To show that $\PP$ is starlike about $(0,0,0)$ we must prove that the inequality \eqref{not-starlike} never happens for any $\la_1, \la_2 \in \D$ and $r \in (0,1)$, that is,
\beq\label{starlike}
|1-\bar\gamma_2\gamma_1|+(1-|\gamma_1|^2)^\half(1-|\gamma_2|^2)^\half \ge r \{|1-\bar\la_2\la_1|+ (1-|\la_1|^2)^\half(1-|\la_2|^2)^\half \}
\eeq 
holds for all $\la_1, \la_2 \in \D$ and $r \in (0,1)$.

The inequality \eqref{starlike} is equivalent to
\begin{align}\label{starlike-1}
|1-\bar\gamma_2\gamma_1|^2+ (1-|\gamma_1|^2)(1-|\gamma_2|^2) +
2|1-\bar\gamma_2\gamma_1|(1-|\gamma_1|^2)^\half(1-|\gamma_2|^2)^\half  \ge & ~\nn\\
\;\;\;\;r^2 \{|1-\bar\la_2\la_1|^2 +(1-|\la_1|^2)(1-|\la_2|^2)+
2|1-\bar\la_2\la_1|(1-|\la_1|^2)^\half(1-|\la_2|^2)^\half\} & ~.
\end{align}

By equation \eqref{keepthis},
\begin{align}
1-\half|s|^2+|p|^2 &=\half (1-|\la_1|^2)(1-|\la_2|^2) +\half |1-\bar\la_2\la_1|^2. 
\end{align}
Thus \eqref{starlike} is equivalent to
\begin{align}\label{starlike-2}
2-r^2|s|^2+2 r^2|p|^2 +
2|1-\bar\gamma_2\gamma_1|(1-|\gamma_1|^2)^\half(1-|\gamma_2|^2)^\half  \ge & ~\nn\\
\;\;\;\;r^2 \{2-|s|^2+2|p|^2+
2|1-\bar\la_2\la_1|(1-|\la_1|^2)^\half(1-|\la_2|^2)^\half\}, & ~
\end{align}
and therefore to
\begin{align}\label{starlike-3}
2(1-r^2)+
2|1-\bar\gamma_2\gamma_1|(1-|\gamma_1|^2)^\half(1-|\gamma_2|^2)^\half  \ge & ~\nn\\
\;\;\;\;2 r^2 |1-\bar\la_2\la_1|(1-|\la_1|^2)^\half(1-|\la_2|^2)^\half. & ~
\end{align}
By equation \eqref{p-beta-la},
\beq\label{p-beta-la-1}
\sqrt{1-|\beta|^2}(1-|p|^2)=|1-\bar\la_2\la_1|(1-|\la_1|^2)^\half(1-|\la_2|^2)^\half,
\eeq
where 
\[
\beta=\frac{s-\bar sp}{1-|p|^2}.
\]
Hence \eqref{starlike} is equivalent to
\beq\label{starlike-4}
1+ \sqrt{1-|\beta_r|^2}(1-r^2|p|^2)\ge r^2\{1 +\sqrt{1-|\beta|^2}(1-|p|^2)\},
\eeq
where 
\[
\beta_r=\frac{rs-r^2\bar sp}{1-r^2|p|^2}.
\]
Therefore to show that $\PP$ is starlike about $(0,0,0)$ it is enough to show that the function $f : (0,1) \to \R$,
\[
f(r) = \frac{1}{r^2}\{1+ \sqrt{1-|\beta_r|^2}(1-r^2|p|^2)\}
\]
is monotone decreasing on $(0,1)$.
Let us prove that the derivative $f'(r) < 0$ for all $r \in (0,1)$. 

A straightforward verification shows that, for any  $ r > 0$,
\begin{align}\label{der_f_1}
f'(r) &= \frac{-2}{r^3} \{ 1+ \sqrt{1-|\beta_r|^2}(1-r^2|p|^2) \}
+\frac{1}{r^2} \left(\sqrt{1-|\beta_r|^2}(1-r^2|p|^2)\right)'\\
~&=-\frac{2}{r^3} - \frac{2}{r^3} \sqrt{1-|\beta_r|^2}(1-r^2|p|^2)+ \;\;\;\;\;\;\;\;\;\;\;\;\;\;\;\;\;\;\;\;\;\;\nn \\
~&~ \frac{1}{r^2} \{-2 r |p|^2 \sqrt{1-|\beta_r|^2} +
(1-r^2|p|^2) \frac{(-1)}{2\sqrt{1-|\beta_r|^2}} 
(\beta_r \bar{\beta_r})'\}.\nn 
\end{align}
Thus
\begin{equation}\label{der_f_2}
f'(r)= -\frac{2}{r^3} - \frac{2}{r^3}\sqrt{1-|\beta_r|^2}-\frac{1}{r^2}(1-r^2|p|^2) \frac{1}{2 \sqrt{1-|\beta_r|^2}} (\beta_r \bar{\beta_r})'.
\end{equation}

Another straightforward calculation shows that, for any  $ r > 0$,
\[
(\beta_r)'= \left(\frac{rs-r^2\bar sp}{1-r^2|p|^2}\right)' =\frac{s -r \bar{s} p- p r(\bar{s}-r s \bar{p})}{(1-r^2|p|^2)^2}.
\]
Hence
\begin{align}\label{der-beta_r-bar-beta_r}
(\beta_r \bar{\beta_r})'&= \beta_r' \bar{\beta_r} + \beta_r \bar{\beta_r'}= 2 \mathrm{Re}(\beta_r' \bar{\beta_r}) \\
&= 2 \mathrm{Re}\left(\bar{\beta_r} \frac{s -r \bar{s} p- p r(\bar{s}-r s \bar{p})}{(1-r^2|p|^2)^2}\right)  \nn\\
&=  \frac{2}{(1-r^2|p|^2)}\mathrm{Re}
\left\{
\bar{\beta_r} \left(\frac{rs -r^2 \bar{s} p}{r(1-r^2|p|^2)}-\frac{ p (r\bar{s}-r^2 s \bar{p})}{(1-r^2|p|^2)} \right)
\right\}  \nn\\
&=  \frac{2}{(1-r^2|p|^2)}\mathrm{Re}
\left\{
\bar{\beta_r} \left(\frac{1}{r}\beta_r - p \bar{\beta_r} \right)
\right\}.  \nn
\end{align}

Therefore, by \eqref{der_f_2} and \eqref{der-beta_r-bar-beta_r},
we have 
\begin{align}\label{der_f_3}
f'(r) &= -\frac{2}{r^3} - \frac{2}{r^3}\sqrt{1-|\beta_r|^2}-\\
&\;\;\;\; \frac{1}{r^2}(1-r^2|p|^2) \frac{1}{2 \sqrt{1-|\beta_r|^2}}
 \frac{2}{(1-r^2|p|^2)}\mathrm{Re}
\left\{
\bar{\beta_r} \left(\frac{1}{r}\beta_r - p \bar{\beta_r} \right)
\right\}  \nn\\
 &= -\frac{2}{r^3} - \frac{2}{r^3}\sqrt{1-|\beta_r|^2}-\frac{1}{r^2} \frac{1}{\sqrt{1-|\beta_r|^2}}
\mathrm{Re}
 \left(\frac{1}{r}|\beta_r|^2 - p \bar{\beta_r}^2 \right)  \nn\\
 &= -\frac{2}{r^3} - \frac{1}{r^3}\frac{(2-|\beta_r|^2)}{\sqrt{1-|\beta_r|^2}} + \frac{1}{r^2} \frac{1}{\sqrt{1-|\beta_r|^2}}
\mathrm{Re}(p \bar{\beta_r}^2).  \nn
\end{align}

By \cite[Theorem 2.3]{AY04}, $\G$ is starlike about $(0,0)$. Hence
$(s,p) \in \G$ implies that $(rs,rp) \in \G$ for all $0 <r <1$, and,
by \cite[Theorem 2.1]{AY04}, we have $|\beta_r| <1$. Therefore
\[
-1 < \mathrm{Re}(p \bar{\beta_r}^2) < 1.
\]
Hence, for all $r \in (0,1)$,
\beq \label{der-f-final}
-\frac{2}{r^3} - \frac{1}{r^3}
\frac{(2-|\beta_r|^2)}{\sqrt{1-|\beta_r|^2}} - 
\frac{1}{r^2} \frac{1}{\sqrt{1-|\beta_r|^2}}
 < f'(r) < -\frac{2}{r^3} - \frac{1}{r^3}
\frac{(2-|\beta_r|^2)}{\sqrt{1-|\beta_r|^2}} + 
\frac{1}{r^2} \frac{1}{\sqrt{1-|\beta_r|^2}}.
\eeq
The right-hand side of \eqref{der-f-final} can be expressed as
\begin{align}\label{RHS-der_f}
{\rm RHS} &= -\frac{2}{r^3} - \frac{1}{r^3}\frac{(2-|\beta_r|^2)}{\sqrt{1-|\beta_r|^2}} + \frac{1}{r^2} \frac{1}{\sqrt{1-|\beta_r|^2}}\\
~&= -\frac{1}{r^3} \left(2 +\frac{(2-|\beta_r|^2)}{\sqrt{1-|\beta_r|^2}} -  \frac{r}{\sqrt{1-|\beta_r|^2}}\right) \nn \\
~&= -\frac{1}{r^3} \left( 2 +\sqrt{1-|\beta_r|^2} +
\frac{1-r}{\sqrt{1-|\beta_r|^2}}\right).  \nn
\end{align}
Thus  $f'(r) <0$ for all $r \in (0,1)$. This implies that 
$\PP$ is starlike about $(0,0,0)$.

The point $x=(0,2,1)$ is in $\bar\PP$, but $ix = (0,2i,i) \notin\bar\PP$ because, for $(0,2i,i)$,
\[ 
|s -\bar{s} p| =|2i+2i \cdot i|= |2i - 2| > 0\; \text{but} \;
1-|p|^2=0.
\]
Therefore neither $\bar\PP$ nor $\PP$ is circled.
\end{proof}

A domain $\Omega$ is said to be polynomially convex provided
that, for each compact subset $K$ of $\Omega$, the polynomial hull $\widehat K$ of $K$ is contained in $\Omega$.

\begin{theorem} \label{polyconvex}
$\PP$ and $\bar\PP$ are polynomially convex.
\end{theorem}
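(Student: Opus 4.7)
My plan is to exploit the characterization in Theorem \ref{4equivClosure} that $(a,s,p)\in\bar\PP$ precisely when $(s,p)\in\Gamma$ and $|\Psi_z(a,s,p)|\le 1$ for every $z\in\D$, combined with the known polynomial convexity of $\Gamma$ and of $\G$ from the theory of the symmetrised bidisc (see \cite{AY04}), and the Oka--Weil approximation theorem.

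For the closed pentablock, let $y_0=(a_0,s_0,p_0)\notin\bar\PP$; by Theorem \ref{4equivClosure} either $(s_0,p_0)\notin\Gamma$, or $(s_0,p_0)\in\Gamma$ but $|\Psi_{z_0}(y_0)|>1$ for some $z_0\in\D$. In the first case, polynomial convexity of $\Gamma$ supplies a polynomial $q(s,p)$ with $|q(s_0,p_0)|>\|q\|_\Gamma$; regarded as a polynomial on $\C^3$ independent of $a$, it separates $y_0$ from $\bar\PP$ because $\bar\PP$ projects into $\Gamma$ under $(a,s,p)\mapsto(s,p)$. In the second case, the rational function
\[
f(s,p)=\frac{1-|z_0|^2}{1-sz_0+pz_0^2}
\]
is holomorphic in a neighbourhood of $\Gamma$, since the denominator never vanishes on $\Gamma$ for $z\in\D$. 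By Oka--Weil we can pick polynomials $q_n(s,p)$ with $\|q_n-f\|_\Gamma\to 0$. Setting $Q_n(a,s,p)=a\,q_n(s,p)$ and $M=\sup_{\bar\PP}|a|$, one has $\|Q_n\|_{\bar\PP}\le 1+M\|q_n-f\|_\Gamma$ (using $|\Psi_{z_0}|\le 1$ on $\bar\PP$) while $Q_n(y_0)\to\Psi_{z_0}(y_0)$, so eventually $|Q_n(y_0)|>\|Q_n\|_{\bar\PP}$, which delivers a separating polynomial.

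For the open pentablock, let $K\subset\PP$ be compact. By Proposition \ref{valsup} the continuous function $|a|\kappa(s,p)$ is strictly less than $1$ on $\PP$ and so attains some supremum $r<1$ on $K$. Given any $y_0$ in the polynomial hull $\widehat K$, applying the definition of polynomial hull to polynomials depending only on $s,p$ places $(s_0,p_0)$ in the polynomial hull of the compact set $\{(s,p):(a,s,p)\in K\text{ for some }a\}\subset\G$, and polynomial convexity of $\G$ then forces $(s_0,p_0)\in\G$. Were $y_0\notin\PP$, we would have $|\Psi_{z_0}(y_0)|\ge 1$ for some $z_0\in\D$. Repeating the Oka--Weil construction, but with $\Gamma$ replaced by a compact polynomially convex subset $L$ of $\G$ containing both $(s_0,p_0)$ and the projection of $K$, produces polynomials $Q_n$ with $\|Q_n\|_K\le r+M\|q_n-f\|_L\to r$ and $|Q_n(y_0)|\to|\Psi_{z_0}(y_0)|\ge 1>r$, contradicting $y_0\in\widehat K$. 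Thus $\widehat K\subset\PP$.

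The decisive step is the Oka--Weil approximation, which rests on $\Gamma$ and suitable compact subsets of $\G$ being polynomially convex. I would import these facts from the literature on the symmetrised bidisc \cite{AY04} rather than reprove them here; the rest of the argument is essentially a uniform quantitative comparison between the genuine rational obstruction $\Psi_{z_0}$ and a polynomial approximation of it.
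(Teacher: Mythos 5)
Your proof is correct, and its engine is the same as the paper's: the linear fractional function $\Psi_{z_0}$, which is bounded by $1$ on $\bar\PP$ by Theorem \ref{4equivClosure} and exceeds $1$ at the excluded point by Proposition \ref{valsup}, is approximated uniformly by polynomials. But you implement both halves differently. For $\bar\PP$ the paper does not invoke Oka--Weil: it builds explicit approximants by truncating the Neumann series of $(1-z_0\la_1)^{-1}(1-z_0\la_2)^{-1}$, symmetrising, and estimating the error by $4|a||z_0|^{N+1}/(1-|z_0|)$, which forces a separate easy case $|x_1|>1$; your argument avoids that case split because $q_n\to f$ at the single point $(s_0,p_0)\in\Gamma$ no matter how large $|a_0|$ is, and the appeal to Oka--Weil is legitimate since $\Gamma$ is compact and polynomially convex and $1-sz_0+pz_0^2=(1-\la_1z_0)(1-\la_2z_0)$ is bounded away from zero on $\Gamma$. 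For the open pentablock the routes genuinely diverge: the paper exhausts $\PP$ by the compact sets $\PP_r$ and asserts their polynomial convexity, whereas you work directly with $K$, the bound $r=\max_K|a|\kappa(s,p)<1$, and a projection of the hull into the symmetrised bidisc. The one ingredient you should not leave as an unexamined import is the polynomial convexity of the open set $\G$ (equivalently, that the projection of $K$ together with $(s_0,p_0)$ lies in a polynomially convex compact subset of $\G$): \cite{AY04} is invoked in this paper only for $\Gamma$. It does follow easily --- the sets $\left\{(\rho s,\rho^2 p):(s,p)\in\Gamma\right\}$, $0<\rho<1$, are polynomially convex, being images of $\Gamma$ under invertible linear maps of $\C^2$, and they exhaust $\G$ --- so state that scaling argument explicitly; once it is in place your auxiliary set $L$ is actually unnecessary, since approximating on $\Gamma$ itself already suffices because $(s_0,p_0)$ and the projection of $K$ both lie in $\Gamma$. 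In short: correct, with Oka--Weil replacing the paper's hands-on estimates (a cleaner but less self-contained proof of the closed case) and a direct hull argument replacing the paper's exhaustion by the sets $\PP_r$ in the open case.
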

\begin{proof} Let us first show that $\bar\PP$ is polynomially convex.
Let $x \in \C^3\setminus\bar\PP$.  We must find a polynomial $f$ such that $|f|\leq 1$ on $\bar\PP$ and $|f(x)| > 1$.   

If $(x_2,x_3) \notin\Gamma$ then, since $\Gamma$ is polynomially convex  \cite[Theorem 2.3]{AY04}, there is a polynomial $g$ in two variables such that $|g|\leq 1$ on $\Gamma$ and $|g(x_2,x_3)|>1$.  The polynomial $f(u_1,u_2,u_3)= g(u_2,u_3)$ then separates $x$ from $\bar\ccc$.

Now suppose that $(x_2,x_3)=(\la_1+\la_2,\la_1\la_2) \in\Gamma$.  By  Theorem \ref{4equivClosure} it must be that
\[
|x_1| >  \half|1-\bar{\la_2}\la_1|+\half (1-|\la_1|^2)^\half(1-|\la_2|^2)^\half.
\]

If $|x_1| >1$ the polynomial $f(u) = u_1$ has the desired property.
Otherwise   $|x_1| \le 1$.   Recall that, for all $(a,s,p)\in \bar\PP$,
\[
\left|\Psi_z(a,s,p)\right|=\left|\frac{a(1-|z|^2)}{1-sz+pz^2}\right| \le 1
\]
for all $z \in \D$. By Proposition \ref{valsup},  the point
$$
z_0 = \frac{\bar\beta}{1 + \sqrt{1 - |\beta|^2}}\in\D,
$$ 
where $ \beta=\frac{s-\bar sp}{1-|p|^2}$,
satisfies 
$|\Psi_{z_0}(x)|> 1$, while $|\Psi_{z_0}| \leq 1$ on $\bar\PP$. We shall approximate the linear
fractional function $\Psi_{z_0}$ by a polynomial.  For $N\geq 1$ let
\[
g_N(a,u_1,u_2)= a (1-|z_0|^2)
(1+ z_0 u_1 +\dots+z_0^N u_1^N)(1+ z_0 u_2 +\dots+z_0^N u_2^N).
\]
Then $g_N$ is a polynomial that is symmetric in $u_1$ and $u_2$.  Hence there is a polynomial
  $f_N$ in $3$ variables such that
\[
f_N(a, u_1 +u_2, u_1 u_2)=g_N(a,u_1,u_2).
\]
For any complex $z,w$ different from $1$ we have
\begin{align*}
(1-z)\inv(1-w)\inv-\sum_0^N z^j\sum_0^N w^k &=\sum_0^N z^j\frac{w^{N+1}}{1-w}+ \frac{z^{N+1}}{(1-z)(1-w)}
\end{align*}
and hence if $|z|<1, \, |w|<1$,
\begin{align*}
\left|(1-z)\inv(1-w)\inv-\sum_0^N z^j\sum_0^N w^k\right| &\leq \frac{|z|^{N+1}+|w|^{N+1}}{(1-|z|)(1-|w|)}.
\end{align*}
For any $ u_1,u_2$ such that $   |u_1| \le 1, \ |u_2|\leq 1$ substitute $z=u_1z_0, \ w=u_2z_0$ and deduce that
\begin{align*}
\left|(1-z_0u_1)\inv(1-z_0u_2)\inv-\sum_0^N z_0^j u_1^j  \sum_0^N z_0^k u_2^k \right|&\leq \frac{2|z_0|^{N+1}}{(1-|z_0|)^2}.
\end{align*}
It follows that if $|a|\leq 1, |u_1|\leq 1, |u_2|\leq 1$ then
\begin{align*}
|(f_N-\Psi_{z_0})(a,u_1+u_2,u_1u_2)|&= |g_N(a,u_1,u_2) - \Psi_{z_0}(a,u_1+u_2,u_1u_2)|\\
	&  \le  |a|(1-|z_0|^2) \frac{2|z_0|^{N+1}}{ (1 - |z_0|)^2}\\
	& \leq \frac{4|a||z_0|^{N+1}}{ 1 - |z_0|}.
\end{align*}

Let $0< \eps< \tfrac 13 (|\Psi_{z_0}(x)|-1)$ and choose $N$ so large that $|f_N -\Psi_{z_0}| < \eps$  at all points  $ (a, u_1 +u_2, u_1 u_2)$ such that $|a|\leq 1, |u_1| \le 1, |u_2|\leq 1$.   
Then $|f_N| < 1+\eps$ on $\bar\PP$ and $|f_N(x)| \geq 1+2\eps$.
The function $f=(1+\eps)^{-1}f_N$ has the desired properties.
 Thus $\bar\PP$ is polynomially  convex.

Now consider any compact subset $K$ of $\PP$.  For $r\in
(0, 1)$ define the compact set
$$
\PP_r \stackrel{\mathrm{def}}{=}
\{(z_0,z_1 + z_2,
z_1z_2):  |z_1| \le r, |z_2| \le r, |z_0|\le \half|1-\bar{z_2}z_1|+\half (1-|z_1|^2)^\half(1-|z_2|^2)^\half\}.
$$
Then
\[
\bigcup_{0<r<1} \ccc_r = \ccc,
\]
and so, for $r$ sufficiently close to $1$, we have
$$ 
K \subset \PP_r \subset \PP.
$$
Since $\PP_r$ is polynomially convex,
$$
\widehat K \subset \widehat\PP_r = \PP_r
\subset \PP,
$$
and so $\PP$ is polynomially convex. 
\end{proof}

It follows that $\PP$ is a domain of holomorphy (for example \cite[Theorem 3.4.2]{krantz}).  However, Theorem \ref{PcapR3} shows that $\PP$ does not have a $C^1$ boundary, and consequently much of the theory of pseudoconvex domains does not apply to $\PP$.

\section{Some automorphisms of $\PP$}\label{automorphisms}

By an {\em automorphism} of a domain $\Omega$ in $\C^n$ we mean a holomorphic map $f$ from $\Omega$ to $\Omega$ with holomorphic inverse.  Every bijective holomorphic self-map of $\Omega$ is in fact an automorphism \cite{krantz}.

For $\alpha \in \C$ we write
$$
B_\alpha(z) = \frac{z-\alpha}{1-\overline \alpha z}.
$$
In the event that $\al\in\D$ the rational function $B_\al$ is called
a {\em Blaschke factor}.
A {\em M\"obius function} is a function of the form $cB_\alpha$
for some $\alpha \in \mathbb{D}$ and $c\in \mathbb{T}$.
The set of all M\"obius functions is the automorphism group 
$\Aut \mathbb{D}$ of $\mathbb{D}$.

All automorphisms of the symmetrised bidisc $\G$ are induced by elements of  $ \Aut \D$ \cite{JP}. That is, they are of the form
\[
\tau_\up (z_1 +z_2, z_1 z_2) = (\up(z_1) +\up(z_2), \up(z_1) \up(z_2)), \;\; z_1, z_2 \in \D,
\]
for some $\up \in \Aut \D$.
See also \cite[Theorem 4.1]{AY08} for another proof of this result.

For $\omega \in \T$ and $\up \in \Aut \D$, let
\beq \label{automP}
f_{\omega \up}( a, s,p)= 
\left( \frac{\omega \eta(1-|\alpha|^2) a}{1-  \bar\alpha s + \bar{\alpha}^2 p}, \tau_{\up}(s,p) \right)
\eeq
where $\up = \eta B_{\alpha}$.

\begin{theorem}\label{aut_PP} 
The maps $f_{\omega \up}$, for 
$\omega \in \T$ and $\up \in \Aut \D$, constitute a group of automorphisms of $\PP$ under composition. Each automorphism $f_{\omega \up}$ extends analytically to a neighbourhood  of $\bar\PP$.

Moreover, for all $\omega_1, \omega_2 \in \T$, $\up_1, \up_2 \in \Aut \D$,
\[
f_{\omega_1 \up_1} \circ f_{\omega_2 \up_2} = f_{(\omega_1 \omega_2) (\up_1 \circ \up_2)},
\] 
and, for all $\omega \in \T$, $\up \in \Aut \D$,
\[
(f_{\omega \up})^{-1} = f_{\bar{\omega} \up^{-1}}.
\]
\end{theorem}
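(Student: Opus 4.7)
The plan is: first, to show each $f_{\omega\up}$ is a well-defined holomorphic self-map of $\PP$ via the equivalence (1)$\Leftrightarrow$(5) of Theorem \ref{4equiv}; second, to establish the composition formula by direct substitution; and third, to deduce from the composition formula the inverse formula, the automorphism property, the group structure, and the extension to a neighbourhood of $\bar\PP$.

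For the first step, write $(s,p)=(z_1+z_2,z_1z_2)$ and $(a',s',p')=f_{\omega\up}(a,s,p)$. Since $\tau_\up\in\Aut\G$, we have $(s',p')\in\G$. Setting $\tilde z = (\bar\alpha+\eta w)/(1+\eta\alpha w)$, the heart of the computation is the factorization
\[
1-\up(z_j)w \;=\; (1+\eta\alpha w)\cdot\frac{1-z_j\tilde z}{1-\bar\alpha z_j},
\]
which multiplied over $j=1,2$ gives
\[
1-s'w+p'w^2 \;=\; \frac{(1+\eta\alpha w)^2(1-s\tilde z+p\tilde z^2)}{1-\bar\alpha s+\bar\alpha^2 p}.
\]
Combined with the elementary identity $1-|\tilde z|^2 = (1-|\alpha|^2)(1-|w|^2)/|1+\eta\alpha w|^2$, this yields $|\Psi_w(a',s',p')|=|\Psi_{\tilde z}(a,s,p)|$. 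Since $w\mapsto\tilde z$ is a self-bijection of $\D$, the suprema over $\D$ coincide, and Theorem \ref{4equiv}(1)$\Leftrightarrow$(5) delivers $(a',s',p')\in\PP$. The same factorization also shows that $1-\bar\alpha s+\bar\alpha^2 p\neq 0$ whenever $(s,p)\in\Gamma$ (apply the remark preceding Theorem \ref{4equivClosure} with $z=\bar\alpha\in\D$), so by compactness of $\Gamma$ the rational map $f_{\omega\up}$ extends holomorphically to a neighbourhood of $\bar\PP$.

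For the composition step, I write $\up_j=\eta_j B_{\alpha_j}$ and compute $\up_1\circ\up_2=\eta B_\alpha$ as a single M\"obius function. The second component of $f_{\omega_1\up_1}\circ f_{\omega_2\up_2}$ agrees with $\tau_{\up_1\circ\up_2}$ by the standard group structure on $\Aut\G$. The first component reduces to the algebraic identity
\[
\frac{\omega_1\eta_1(1-|\alpha_1|^2)}{1-\bar\alpha_1 s''+\bar\alpha_1^2 p''}\cdot\frac{\omega_2\eta_2(1-|\alpha_2|^2)}{1-\bar\alpha_2 s+\bar\alpha_2^2 p} \;=\; \frac{(\omega_1\omega_2)\eta(1-|\alpha|^2)}{1-\bar\alpha s+\bar\alpha^2 p},
\]
where $(s'',p'')=\tau_{\up_2}(s,p)$, which I would verify by direct bookkeeping of the M\"obius parameters. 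Once composition is proven, the inverse formula follows at once from $f_{\omega\up}\circ f_{\bar\omega\up^{-1}}=f_{1,\id}=\id_\PP$ (and similarly on the other side), so each $f_{\omega\up}$ is bijective with holomorphic inverse and hence an automorphism, and the collection is closed under composition and inversion. The main obstacle will be the first-component composition identity: although mechanical, it requires careful tracking of how the Blaschke parameters $(\eta_j,\alpha_j)$ combine, and I see no way to sidestep the explicit calculation at this stage of the paper.
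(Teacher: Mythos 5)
Your proposal is essentially correct, but it takes a genuinely different route from the paper: the authors explicitly note that one \emph{can} use Theorem \ref{4equiv} and straightforward calculations, and then choose instead to lift everything to the matrix ball. In the paper, $F_{\omega\up}(A)=\up(U_\omega AU_\omega^*)$ is shown to be an automorphism of $\B$, and the laws $F_{\omega_1\up_1}\circ F_{\omega_2\up_2}=F_{(\omega_1\omega_2)(\up_1\circ\up_2)}$ and $(F_{\omega\up})^{-1}=F_{\bar\omega\up^{-1}}$ are immediate because the functional calculus commutes with unitary conjugation; a single $2\times2$ matrix multiplication then shows that $\pi(F_{\omega\up}(A))$ depends only on $\pi(A)$ and equals $f_{\omega\up}(\pi(A))$, so the surjection $\pi:\B\to\PP$ transports the entire group structure at once, and the explicit rational formula \eqref{automP-la} gives the extension to a neighbourhood of $\bar\PP$. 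Your argument stays inside $\C^3$: the factorization $1-\up(z_j)w=(1+\eta\alpha w)(1-z_j\tilde z)/(1-\bar\alpha z_j)$, the identity $1-|\tilde z|^2=(1-|\alpha|^2)(1-|w|^2)/|1+\eta\alpha w|^2$, and hence $|\Psi_w(f_{\omega\up}(x))|=|\Psi_{\tilde z}(x)|$ with $w\mapsto\tilde z$ an automorphism of $\D$, are all correct, and together with Theorem \ref{4equiv}(1)$\Leftrightarrow$(5), the fact that $\tau_\up$ preserves $\G$, and the nonvanishing of $1-\bar\alpha s+\bar\alpha^2p$ on $\Gamma$ they do give the self-map property and the analytic continuation past $\bar\PP$. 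What the paper's detour through $\B$ buys is precisely what you flag as your main obstacle: the group laws come for free, with no M\"obius-parameter bookkeeping. What your route buys is a proof entirely in the coordinates $(a,s,p)$, using only machinery ($\Psi_z$, Theorem \ref{4equiv}) already established.

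The one step you leave open is the first-component composition identity, which you state correctly but defer to ``direct bookkeeping''. It is true, and it can be closed with no parameter tracking at all: since $1-\bar\alpha s+\bar\alpha^2p=(1-\bar\alpha z_1)(1-\bar\alpha z_2)$ and $\up(z_1)-\up(z_2)=\eta(1-|\alpha|^2)(z_1-z_2)/\bigl((1-\bar\alpha z_1)(1-\bar\alpha z_2)\bigr)$ for $\up=\eta B_\alpha$, the multiplier of $a$ in $f_{\omega\up}$ equals $\omega\bigl(\up(z_1)-\up(z_2)\bigr)/(z_1-z_2)$ when $z_1\neq z_2$ (and $\omega\,\up'(z_1)$ when $z_1=z_2$). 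Multiplicativity of difference quotients under composition (the chain rule on the diagonal) then yields your displayed identity immediately, because $\tau_{\up_2}(s,p)$ corresponds to the pair $\up_2(z_1),\up_2(z_2)$. With that inserted, your deductions of the inverse formula, bijectivity, holomorphy of the inverse and the group property are all sound.
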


One can use Theorem \ref{4equiv} and straightforward calculations to prove these statements. In this paper we will take a different approach. We show in  Proposition \ref {prop7.2} to Corollary \ref{7.5} below  that this group is the image under a homomorphism induced by $\pi$ of a group of automorphisms of $\B$.  Moreover the explicit formula \eqref{automP-la}
shows that every rational function $f_{\omega\ups}$ extends holomorphically to a neighbourhood of $\bar\ccc$.

For $\omega \in \T$ and $\up \in \Aut \D$ we define 
\[
F_{\omega \up}: \B \to \B
\] 
by 
\beq \label{automBall}
F_{\omega \up}(A)= \up(U_{\omega}A U_{\omega}^*),\;\; A \in \B,
\eeq
where
 \begin{equation*}
 U_{\omega}= \left[ \begin{array}{cc} 1 & 0 \\ 0 & \omega
\end{array} \right].
\end{equation*}
Note that $\up(U_{\omega} A U_{\omega}^*)$ is well defined by the functional calculus since the spectrum $\sigma (U_{\omega}A U_{\omega}^*)$ is contained in $\D$. If $\up = \eta B_{\alpha}$ then
\[
\up(A) = \eta B_{\alpha} (A) = \eta (A - \alpha I) (I -\bar{\alpha} A)^{-1}.
\]
It is easy to see that
\[
F_{\omega \up}(A)= U_{\omega} \up(A) U_{\omega}^*.
\]

\begin{proposition} \label{prop7.2}
The set 
\[
\mathcal{F} = \{ F_{\omega \up} : \omega \in \T, \; \up \in \Aut \D\}
\]
is a group of automorphisms of $\B$ under composition, and
\[
F_{\omega_1 \up_1} \circ F_{\omega_2 \up_2} = F_{(\omega_1 \omega_2) (\up_1 \circ \up_2)}
\]
and 
\[
(F_{\omega \up})^{-1} = F_{\bar\omega \up^{-1}}.
\]
\end{proposition}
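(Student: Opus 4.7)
The plan is to reduce everything to two observations: first, that conjugation by the unitary $U_\omega$ preserves the operator norm and commutes with the rational functional calculus that defines $\up$; second, that the formula $F_{\omega\up} = U_\omega \up(\cdot) U_\omega^*$ manifestly composes by multiplying the unitaries and composing the Möbius maps. Everything in the statement then falls out of the composition law.

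I would first verify that each $F_{\omega\up}$ maps $\B$ into itself and is holomorphic. Since $|\omega|=1$, the matrix $U_\omega$ is unitary, so $\|U_\omega A U_\omega^*\| = \|A\| < 1$ whenever $A\in\B$. Writing $\up = \eta B_\al$ with $\al\in\D$ and $\eta\in\T$, we have
\[
\up(X) = \eta (X-\al I)(I - \bar\al X)^{-1},
\]
which is defined and holomorphic on $\B$ (since $\|\bar\al X\| < 1$ guarantees invertibility of $I-\bar\al X$), and is a well-known biholomorphic automorphism of the operator unit ball $\B$ of $\mat$. Composing with the unitary conjugation $A\mapsto U_\omega A U_\omega^*$ (itself an automorphism of $\B$) shows that $F_{\omega\up}$ is a holomorphic self-map of $\B$.

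Next I would verify the asserted equality of the two expressions for $F_{\omega\up}$, namely $\up(U_\omega A U_\omega^*) = U_\omega \up(A) U_\omega^*$. Because $U_\omega$ is unitary,
\[
U_\omega A U_\omega^* - \al I = U_\omega(A-\al I)U_\omega^*, \qquad (I - \bar\al U_\omega A U_\omega^*)^{-1} = U_\omega (I-\bar\al A)^{-1} U_\omega^*,
\]
so multiplying and inserting the scalar $\eta$ yields the identity. This is the key algebraic fact of the proof; it is essentially the observation that any rational function in one matrix variable commutes with unitary conjugation.

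With the formula $F_{\omega\up}(A) = U_\omega \up(A) U_\omega^*$ in hand, the composition law is a one-line calculation:
\begin{align*}
(F_{\omega_1\up_1}\circ F_{\omega_2\up_2})(A) &= U_{\omega_1}\up_1\bigl(U_{\omega_2}\up_2(A) U_{\omega_2}^*\bigr) U_{\omega_1}^* \\
&= U_{\omega_1}U_{\omega_2}\,(\up_1\circ\up_2)(A)\, U_{\omega_2}^* U_{\omega_1}^* \\
&= U_{\omega_1\omega_2}\,(\up_1\circ\up_2)(A)\, U_{\omega_1\omega_2}^* \\
&= F_{(\omega_1\omega_2)(\up_1\circ\up_2)}(A),
\end{align*}
using the commutation identity from the previous paragraph and the fact that diagonal unitaries satisfy $U_{\omega_1}U_{\omega_2}=U_{\omega_1\omega_2}$. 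Taking $\omega_1=\bar\omega_2=\bar\omega$ and $\up_1=\up_2^{-1}=\up^{-1}$ and noting that $F_{1,\,\id_\D}$ is the identity on $\B$ (since $\id_\D = 1\cdot B_0$ and $U_1 = I$) yields $(F_{\omega\up})^{-1}=F_{\bar\omega\,\up^{-1}}$, and simultaneously shows that $F_{\omega\up}$ is a bijection of $\B$ with holomorphic inverse, hence a genuine automorphism. Closure, associativity (inherited from composition of maps), identity, and inverses then exhibit $\mathcal F$ as a group. I do not expect a serious obstacle; the one place requiring modest care is the commutation $\up(U_\omega A U_\omega^*)=U_\omega\up(A)U_\omega^*$, which must be checked from the rational formula for $\up$ rather than from any holomorphic functional calculus, since $A$ need not be normal.
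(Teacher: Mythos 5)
Your proposal is correct and follows essentially the same route as the paper: both rest on the identity $\up(U_\omega A U_\omega^*)=U_\omega\up(A)U_\omega^*$ (equivalently, that the Möbius functional calculus commutes with unitary conjugation) together with $U_{\omega_1}U_{\omega_2}=U_{\omega_1\omega_2}$ and $\up_1(\up_2(A))=(\up_1\circ\up_2)(A)$, from which the composition law and then the inverse formula $F_{\omega\up}\circ F_{\bar\omega\up^{-1}}=F_{1\,\id_\D}=\id_\B$ follow. Your additional explicit check that each $F_{\omega\up}$ is a holomorphic self-map of $\B$ is a point the paper leaves implicit, but it does not change the argument.
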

\begin{proof}
For $\omega_1, \omega_2 \in \T$, $\up_1, \up_2 \in \Aut \D$ and for all $A \in \B$,
\begin{align} \label{compF}
\left(F_{\omega_1 \up_1} \circ F_{\omega_2 \up_2}\right)(A)&= F_{\omega_1 \up_1} (\up_2(U_{\omega_2} A U_{\omega_2}^*))\\
&=  \up_1( U_{\omega_1}  \up_2(U_{\omega_2}  A U_{\omega_2}^*) U_{\omega_1}^*) \nn\\
&=  \up_1(\up_2(U_{\omega_1}  U_{\omega_2}  A U_{\omega_2}^* U_{\omega_1}^*))  \nn\\
&=  F_{(\omega_1 \omega_2) (\up_1 \circ \up_2)}(A).\nn
\end{align}
For $\omega \in \T$, $\up \in \Aut \D$, 
\begin{align} \label{inverseF}
F_{\omega \up} \circ F_{\bar\omega \up^{-1}}&= F_{(\omega \bar\omega) (\up \circ \up^{-1})}\\
& = F_{ (1) (\id_{\D})} = \id_{\B}. \nn
\end{align}
\end{proof}

\begin{proposition} If $A_1, A_2 \in \B$ and $\pi(A_1) = \pi(A_2)$
then, for any  $\omega \in \T$ and $\up \in \Aut \D$,
\[
\pi (F_{\omega \up}(A_1)) = \pi(F_{\omega \up}(A_2)).
\]
Furthermore, if $\pi(A_1) = (a, s, p)$ then 
\[
\pi (F_{\omega \up}(A_1)) = \left( \frac{\omega \eta(1-|\alpha|^2) a}{1-  \bar\alpha s + \bar{\alpha}^2 p}, \tau_{\up}(s,p) \right)
\]
where $\up = \eta B_{\alpha}$ for  $\eta \in \T$ and $\alpha \in  \D$.
\end{proposition}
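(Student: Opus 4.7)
The plan is to decompose $F_{\omega\ups}$ into the two operations it is built from, namely application of $\ups$ via the functional calculus and conjugation by $U_\omega$, and to compute how each operation acts on the three coordinates $a_{21}$, $\tr A$, $\det A$. The second assertion of the proposition gives an explicit formula for $\pi(F_{\omega\ups}(A_1))$ in terms of $\pi(A_1)$ only; once that formula is in hand the first assertion (well-definedness on the quotient) is automatic.

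First I would handle the easy piece. If $A=[a_{ij}]$ and $U_\omega=\diag(1,\omega)$, then $U_\omega A U_\omega^{*}$ has the same diagonal as $A$ (so unchanged trace and determinant) and its $(2,1)$-entry is $\omega a_{21}$. Hence for any matrix $B\in\mat$,
\[
\pi(U_\omega B U_\omega^{*})=(\omega B_{21},\tr B,\det B).
\]
So the task reduces to computing $\pi(\ups(A))$, where $\ups=\eta B_\alpha$ acts by the functional calculus: $\ups(A)=\eta(A-\alpha I)(I-\bar\alpha A)^{-1}$.

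Next I would compute $\tr\ups(A)$ and $\det\ups(A)$ by the spectral mapping theorem. If $\la_1,\la_2$ are the eigenvalues of $A$, they lie in $\D$ (since $A\in\B$), and the eigenvalues of $\ups(A)$ are $\ups(\la_1),\ups(\la_2)$. Therefore
\[
(\tr\ups(A),\det\ups(A))=(\ups(\la_1)+\ups(\la_2),\ups(\la_1)\ups(\la_2))=\tau_\ups(s,p),
\]
which depends only on $(s,p)=(\tr A,\det A)$.

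The one genuine calculation is the $(2,1)$-entry of $\ups(A)$. The key identity is
\[
\det(I-\bar\alpha A)=1-\bar\alpha(a_{11}+a_{22})+\bar\alpha^{2}(a_{11}a_{22}-a_{12}a_{21})=1-\bar\alpha s+\bar\alpha^{2}p,
\]
so by the adjugate formula
\[
(I-\bar\alpha A)^{-1}=\frac{1}{1-\bar\alpha s+\bar\alpha^{2}p}\begin{bmatrix}1-\bar\alpha a_{22} & \bar\alpha a_{12}\\ \bar\alpha a_{21} & 1-\bar\alpha a_{11}\end{bmatrix}.
\]
Multiplying $A-\alpha I$ on the left by this matrix and reading off the $(2,1)$-entry, the $a_{22}$-terms cancel and one obtains
\[
\bigl(\ups(A)\bigr)_{21}=\frac{\eta(1-|\alpha|^{2})a_{21}}{1-\bar\alpha s+\bar\alpha^{2}p}.
\]
Combining this with the conjugation step from the first paragraph gives
\[
\pi(F_{\omega\ups}(A))=\left(\frac{\omega\eta(1-|\alpha|^{2})a_{21}}{1-\bar\alpha s+\bar\alpha^{2}p},\tau_\ups(s,p)\right),
\]
which is the advertised formula and depends on $A$ only through $(a_{21},s,p)=\pi(A)$. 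The first assertion of the proposition is then immediate. The only real obstacle is the $(2,1)$-entry computation, but it is a short direct calculation driven by the cancellation noted above.
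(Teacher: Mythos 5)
Your proposal is correct and follows essentially the same route as the paper: compute $(I-\bar\alpha A)^{-1}$ by the adjugate formula, observe that the $(2,1)$-entry of $\ups(A)$ is $\eta(1-|\alpha|^2)a_{21}/(1-\bar\alpha s+\bar\alpha^2 p)$ by the same cancellation, handle the conjugation by $U_\omega$ trivially, and get $(\tr,\det)$ from the spectral mapping theorem. (Minor wording point: $\ups(A)=\eta(A-\alpha I)(I-\bar\alpha A)^{-1}$ has the inverse on the right, but since the two factors commute your "multiply on the left" reading gives the same entry, so nothing is affected.)
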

\begin{proof}
Let $A = (a_{ij})_{i,j=1}^2 \in \B$; then 
\begin{align} \label{piF}
\pi (F_{\omega \up}(A))&= \pi (U_{\omega}\up(A) U_{\omega}^*)\\
&= \pi (U_{\omega} \eta (A - \alpha I) (I -\bar{\alpha} A)^{-1} U_{\omega}^*). \nn
\end{align}
Straightforward calculations show that
\[
(I -\bar{\alpha} A)^{-1} = \frac{1}{1- \bar{\alpha} \tr (A) + \bar{\alpha}^2 \det (A)} \left[ \begin{array}{cc} 1- \bar{\alpha} a_{22} & \bar{\alpha} a_{12} \\ \bar{\alpha} a_{21} & 1- \bar{\alpha}a_{11}
\end{array} \right].
\]
Thus
\begin{align} \label{upA}
\up(A) &= \frac{\eta}{1- \bar{\alpha} \tr (A) + \bar{\alpha}^2 \det (A)}
\left[ \begin{array}{cc} a_{11} - \alpha & a_{12} \\ a_{21} & a_{22} -\alpha
\end{array} \right]
 \left[ \begin{array}{cc} 1- \bar{\alpha} a_{22} & \bar{\alpha} a_{12} \\ \bar{\alpha} a_{21} & 1- \bar{\alpha}a_{11}
\end{array} \right] 
\end{align}
and
\begin{align} \label{UupAU*}
U_{\omega} \up(A)U_{\omega}^*  &= \frac{\eta}{1- \bar{\alpha} \tr (A) + \bar{\alpha}^2 \det (A)}
\left[ \begin{array}{cc} * & * \\ \omega a_{21}(1 - |\alpha|^2) & *
\end{array} \right]
\end{align}
By the spectral mapping theorem, if $\sigma(A)= \{ \la_1, \la_2 \}$ then
\begin{align} \label{sigma(UupAU*)}
\sigma(F_{\omega \up}(A))&= \sigma(U_{\omega} \up(A)U_{\omega}^*)\\
&= \sigma(\up(A)) = \{ \up(\la_1), \up(\la_2 )\}.
\end{align}
Therefore if $\pi(A) = (a, s, p)$ then
\[
(\tr, \det)(F_{\omega \up}(A)) = \tau_\up(s,p)
\]
and
\[
\pi (F_{\omega \up}(A))= \left( \frac{\omega \eta(1-|\alpha|^2) a}{1-  \bar\alpha s + \bar{\alpha}^2 p}, \tau_{\up}(s,p) \right).
\]
\end{proof}
\begin{corollary} Each automorphism $F_{\omega \up} \in \mathcal{F}$
induces an automorphism $f_{\omega \up}$ of $\PP$ by 
\[
f_{\omega \up}(a,s,p)= \pi (F_{\omega \up}(A))
\]
for any $A \in \B$ such that $\pi(A) = (a, s, p)$.
Moreover, the map
\[
\chi: \mathcal{F} \to \Aut \PP\; \text{ defined by} \;\;\; \chi (F_{\omega \up}) =  f_{\omega \up}
\]
is a homomorphism of groups.
\end{corollary}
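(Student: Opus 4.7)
The plan is to verify three things in order: (i) $f_{\omega\up}$ is a well-defined holomorphic self-map of $\ccc$; (ii) $\chi$ is a group homomorphism; (iii) each $f_{\omega\up}$ has a holomorphic inverse, hence is an automorphism.

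For (i), well-definedness follows immediately from the preceding proposition, which asserts that $\pi(F_{\omega\up}(A))$ depends only on $\pi(A)$. The explicit formula
\[
f_{\omega\up}(a,s,p)=\left(\frac{\omega\eta(1-|\alpha|^2)a}{1-\bar\alpha s+\bar\alpha^2 p},\,\tau_{\up}(s,p)\right)
\]
given there also shows $f_{\omega\up}$ is a rational map. To see that it is holomorphic on $\ccc$, I would note that for $(s,p)=(\la_1+\la_2,\la_1\la_2)\in\G$ one has
\[
1-\bar\alpha s+\bar\alpha^2 p=(1-\bar\alpha\la_1)(1-\bar\alpha\la_2),
\]
which is nonzero since $|\alpha|<1$ and $|\la_1|,|\la_2|<1$; in particular it is bounded away from $0$ on compact subsets of $\G$. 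Finally, since $F_{\omega\up}$ is an automorphism of $\B$, we have $F_{\omega\up}(\B)=\B$, hence
\[
f_{\omega\up}(\ccc)=f_{\omega\up}(\pi(\B))=\pi(F_{\omega\up}(\B))=\pi(\B)=\ccc,
\]
so $f_{\omega\up}$ does indeed map $\ccc$ to $\ccc$.

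For (ii), let $(a,s,p)\in\ccc$ and choose $A\in\B$ with $\pi(A)=(a,s,p)$. Then $F_{\omega_2\up_2}(A)\in\B$ serves as a preimage under $\pi$ of $f_{\omega_2\up_2}(a,s,p)$, so by definition
\[
\bigl(f_{\omega_1\up_1}\circ f_{\omega_2\up_2}\bigr)(a,s,p)
=\pi\bigl(F_{\omega_1\up_1}(F_{\omega_2\up_2}(A))\bigr)
=\pi\bigl((F_{\omega_1\up_1}\circ F_{\omega_2\up_2})(A)\bigr).
\]
By Proposition \ref{prop7.2} the right-hand side equals $\pi(F_{(\omega_1\omega_2)(\up_1\circ\up_2)}(A)) = f_{(\omega_1\omega_2)(\up_1\circ\up_2)}(a,s,p)$. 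This establishes the homomorphism identity, which in particular gives the composition and inversion formulas asserted in Theorem \ref{aut_PP}.

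For (iii), applying (ii) with $(\omega_2,\up_2)=(\bar\omega,\up^{-1})$ yields
\[
f_{\omega\up}\circ f_{\bar\omega\up^{-1}}=f_{(1)(\id_\D)}=\id_{\ccc},
\]
and similarly in the other order. Thus $f_{\omega\up}$ is a bijective holomorphic self-map of $\ccc$ with holomorphic inverse $f_{\bar\omega\up^{-1}}$, so $f_{\omega\up}\in\Aut\ccc$ and $\chi$ is a well-defined homomorphism into $\Aut\ccc$. There is no real obstacle here; the only point that deserves care is checking that the rational denominator in the formula for $f_{\omega\up}$ is nonvanishing on $\ccc$, which I handle above via the factorisation $(1-\bar\alpha\la_1)(1-\bar\alpha\la_2)$.
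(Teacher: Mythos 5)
Your proof is correct and follows essentially the same route as the paper: the homomorphism identity is established by exactly the same computation, namely $(f_{\omega_1\up_1}\circ f_{\omega_2\up_2})(a,s,p)=\pi\bigl((F_{\omega_1\up_1}\circ F_{\omega_2\up_2})(A)\bigr)=\pi\bigl(F_{(\omega_1\omega_2)(\up_1\circ\up_2)}(A)\bigr)$ for a lift $A\in\B$ of $(a,s,p)$, combined with well-definedness from the preceding proposition. The extra points you verify --- holomorphy via the nonvanishing of $1-\bar\alpha s+\bar\alpha^2 p=(1-\bar\alpha\la_1)(1-\bar\alpha\la_2)$ on $\G$, the surjectivity $f_{\omega\up}(\ccc)=\pi(F_{\omega\up}(\B))=\ccc$, and invertibility via $f_{\bar\omega\up^{-1}}$ --- are left implicit in the paper's proof, so your write-up is simply a more complete version of the same argument.
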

\begin{proof} Let $\omega_1, \omega_2 \in \T, \; \up_1, \up_2 \in \Aut \D$. Consider $(a, s, p) \in \PP$ and pick $A \in \B$ such that $\pi(A) = (a, s, p)$. Then
\begin{align} \label{homomorphism}
(f_{\omega_1 \up_1} \circ f_{\omega_2 \up_2})( a, s, p)&= 
f_{\omega_1 \up_1} (\pi (F_{\omega_2 \up_2}(A)))\\
&=  \pi (F_{\omega_1 \up_1}(F_{\omega_2 \up_2}(A))) \nn \\
&=  \pi (F_{\omega_1 \up_1} \circ F_{\omega_2 \up_2}(A)) \nn\\
&= \chi (F_{\omega_1 \up_1} \circ F_{\omega_2 \up_2})( a, s, p). \nn
\end{align}
Thus $ \chi (F_{\omega_1 \up_1} \circ F_{\omega_2 \up_2})=f_{\omega_1 \up_1} \circ f_{\omega_2 \up_2}$ for all $\omega_1, \omega_2 \in \T, \; \up_1, \up_2 \in \Aut \D$.
\end{proof}
\begin{corollary} \label{7.5}
The set 
\[
\chi({\mathcal{F}}) = \{ f_{\omega \up}:  \omega \in \T, \up \in \Aut \D \}
\]
is a group of automorphisms of $\PP$ under composition.
\end{corollary}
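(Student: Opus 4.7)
The plan is to deduce Corollary \ref{7.5} essentially as a formal consequence of the two preceding results: Proposition \ref{prop7.2}, which establishes that $\mathcal{F}$ is a group under composition with the stated formulas for products and inverses, and the preceding corollary, which establishes both that each $f_{\omega\ups}$ is a well-defined automorphism of $\PP$ and that $\chi : \mathcal{F} \to \Aut\PP$ is a group homomorphism. Once these facts are in hand, the conclusion that $\chi(\mathcal{F})$ is a subgroup of $\Aut\PP$ is immediate from the general principle that the image of a group under a homomorphism is a subgroup of the target.

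Concretely, I would verify the three subgroup axioms in turn. For closure under composition, given $f_{\omega_1\ups_1}, f_{\omega_2\ups_2} \in \chi(\mathcal{F})$, the homomorphism property combined with Proposition \ref{prop7.2} gives
\[
f_{\omega_1\ups_1} \circ f_{\omega_2\ups_2} = \chi\bigl(F_{\omega_1\ups_1} \circ F_{\omega_2\ups_2}\bigr) = \chi\bigl(F_{(\omega_1\omega_2)(\ups_1 \circ \ups_2)}\bigr) = f_{(\omega_1\omega_2)(\ups_1 \circ \ups_2)},
\]
which lies in $\chi(\mathcal{F})$ since $\omega_1\omega_2 \in \T$ and $\ups_1\circ\ups_2 \in \Aut\D$. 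For the identity element, one takes $\omega = 1$ and $\ups = \id_\D$; substituting into formula \eqref{automP} (with $\alpha = 0$, $\eta = 1$) gives $f_{1,\id_\D}(a,s,p) = (a,s,p)$, the identity on $\PP$. For inverses, the relation $(F_{\omega\ups})^{-1} = F_{\bar\omega\ups^{-1}}$ from Proposition \ref{prop7.2}, together with the homomorphism property of $\chi$, gives $(f_{\omega\ups})^{-1} = f_{\bar\omega\ups^{-1}} \in \chi(\mathcal{F})$.

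There is no real obstacle here: all of the substantive work, namely showing that $f_{\omega\ups}$ is well-defined independently of the lift $A \in \B$ with $\pi(A) = (a,s,p)$, that $f_{\omega\ups}$ is genuinely an automorphism of $\PP$ rather than merely a self-map, and that composition is preserved by $\chi$, has been carried out in the preceding proposition and corollary. The corollary itself is then just the observation that a group homomorphism has an image which is a subgroup.
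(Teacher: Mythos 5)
Your proposal is correct and follows essentially the same route as the paper, which treats this corollary as an immediate consequence of Proposition \ref{prop7.2} (the group structure of $\mathcal{F}$) and the preceding corollary (that each $f_{\omega\ups}$ is a well-defined automorphism of $\PP$ and that $\chi$ is a group homomorphism), so that $\chi(\mathcal{F})$ is the image of a group under a homomorphism and hence a group. Your explicit verification of closure, identity and inverses via the formulas $f_{\omega_1\ups_1}\circ f_{\omega_2\ups_2}=f_{(\omega_1\omega_2)(\ups_1\circ\ups_2)}$ and $(f_{\omega\ups})^{-1}=f_{\bar\omega\ups^{-1}}$ is exactly the intended argument.
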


\begin{proposition}\label{7.6}
For $\omega \in \T$, $\up \in \Aut \D$, and for all $(s,p) \in \PP$, 
\begin{align} \label{automP-la}
f_{\omega \up}( a, s, p)&= 
 \frac{\eta}{1-  \bar\alpha s + \bar{\alpha}^2 p}
\left( \omega (1-|\alpha|^2) a,\; -2 \alpha+ (1+|\alpha|^2) s -2 \bar{\alpha} p,\; \eta (\alpha^2 -\alpha s +p) \right),
\end{align}
where $\up = \eta B_\alpha$ for  $\eta \in \T$ and $\alpha \in  \D$.
\end{proposition}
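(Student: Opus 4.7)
The plan is to derive the formula \eqref{automP-la} by a direct computation starting from the defining formula \eqref{automP} of $f_{\omega\up}$. Recall that
\[
f_{\omega \up}( a, s,p)=
\left( \frac{\omega \eta(1-|\alpha|^2) a}{1-  \bar\alpha s + \bar{\alpha}^2 p},\; \tau_{\up}(s,p) \right),
\]
so the first coordinate already has the required form, and only $\tau_\up(s,p)$ needs to be expressed as a rational function of $s$ and $p$ alone.

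Writing $s=\la_1+\la_2$ and $p=\la_1\la_2$ with $\la_1,\la_2\in\D$, and $\up = \eta B_\alpha$ so that $\up(\la)=\eta(\la-\alpha)/(1-\bar\alpha\la)$, by definition
\[
\tau_\up(s,p) = \bigl(\up(\la_1)+\up(\la_2),\;\up(\la_1)\up(\la_2)\bigr).
\]
First I would clear denominators in the sum:
\[
\up(\la_1)+\up(\la_2) = \eta\,\frac{(\la_1-\alpha)(1-\bar\alpha\la_2)+(\la_2-\alpha)(1-\bar\alpha\la_1)}{(1-\bar\alpha\la_1)(1-\bar\alpha\la_2)}.
\]
Expanding the numerator and collecting the symmetric functions of $\la_1,\la_2$ produces $-2\alpha+(1+|\alpha|^2)s - 2\bar\alpha p$, while the denominator is immediately recognized as $1-\bar\alpha s + \bar\alpha^2 p$. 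For the product,
\[
\up(\la_1)\up(\la_2) = \eta^2\,\frac{\la_1\la_2 - \alpha(\la_1+\la_2)+\alpha^2}{(1-\bar\alpha\la_1)(1-\bar\alpha\la_2)} = \eta^2\,\frac{\alpha^2 - \alpha s + p}{1-\bar\alpha s + \bar\alpha^2 p}.
\]

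Finally I would factor $\eta/(1-\bar\alpha s + \bar\alpha^2 p)$ out of all three coordinates, leaving the triple
\[
\bigl(\omega(1-|\alpha|^2)a,\; -2\alpha+(1+|\alpha|^2)s - 2\bar\alpha p,\; \eta(\alpha^2 - \alpha s + p)\bigr)
\]
inside the parentheses, which is exactly \eqref{automP-la}. There is no genuine obstacle here: the content of the proposition is a routine but useful algebraic simplification, reflecting the fact that the elementary symmetric functions $\tr$ and $\det$ of the $2\times 2$ matrix $\up(A) = \diag(\up(\la_1),\up(\la_2))$ descend to explicit rational functions of $s$ and $p$ under post-composition by the M\"obius map $\up$.
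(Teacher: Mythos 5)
Your computation is correct and is exactly the routine verification the paper has in mind: starting from the defining formula \eqref{automP}, you expand $\tau_\up(s,p)=(\up(\la_1)+\up(\la_2),\up(\la_1)\up(\la_2))$ over the common denominator $(1-\bar\alpha\la_1)(1-\bar\alpha\la_2)=1-\bar\alpha s+\bar\alpha^2 p$ and collect elementary symmetric functions, which yields \eqref{automP-la}. The paper states Proposition \ref{7.6} without a separate proof precisely because it follows from the formula $\pi(F_{\omega\up}(A))=\bigl(\tfrac{\omega\eta(1-|\alpha|^2)a}{1-\bar\alpha s+\bar\alpha^2 p},\tau_\up(s,p)\bigr)$ by this same algebraic simplification, so your argument matches the intended one.
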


Since the appearance of the first version of this paper at arXiv:1403.1960, L. Kosinski \cite{kos} has shown that $\chi(\mathcal{F})$ is in fact
the full group of automorphisms of $\PP$.

\section{The distinguished boundary of $\PP$}\label{disting_bound}

Let $\Omega$ be a domain in $\C^n$ with closure $\bar\Omega$ and let $A(\Omega)$ be the algebra of continuous scalar functions on $\bar\Omega$ that are holomorphic on $\Omega$.  A {\em boundary} for $\Omega$ is a subset $C$ of $\bar\Omega$ such that every function in $ A(\Omega)$ attains its maximum modulus on $C$.  It follows from the  
theory of uniform algebras  \cite[Corollary 2.2.10]{browder} that (at least when $\bar\Omega$ is polynomially convex, as in the case of $\PP$) there is a smallest closed boundary of $\Omega$, contained in all the closed boundaries of $\Omega$ and called the {\em distinguished boundary} of $\Omega$ (or the {\em Shilov boundary} of $A(\Omega)$).  In this section we shall determine the distinguished boundary of $\PP$; 
we denote it by $b\PP$.

Clearly, if there is a function $g\in A(\PP)$  and a point $u \in \bar\PP$ such that $g(u) = 1$ and $ | g(x) | < 1$ for all $x \in \bar\PP \setminus\{u\}$, then $u$ must belong to $b\PP$. Such a point $u$ is called a {\em peak point} of $\bar\PP$ and the function $g$ a {\em peaking function} for $u$.

By \cite[Theorem 2.4]{AY04}, the distinguished boundary of  $\Gamma$ is the
{\em symmetrized torus}
$$
b \Gamma = \{(z_1+z_2, z_1z_2): z_1, z_2 \in
\mathbb{T}\}
$$
which is homeomorphic to a M\" obius band. 
\begin{proposition}\label{peak_points_G}
Every point of $b \Gamma$ is a peak point of $\Gamma$.
\end{proposition}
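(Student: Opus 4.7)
The plan is to construct, for every $(s_0,p_0)\in b\Gamma$, an explicit peak function $g\in A(\G)$ for $(s_0,p_0)$. Writing $(s_0,p_0)=(\omega_1+\omega_2,\omega_1\omega_2)$ with $\omega_1,\omega_2\in\T$, I would split into the \emph{diagonal} case $\omega_1=\omega_2$ and the \emph{generic} case $\omega_1\neq\omega_2$, and in each case use an automorphism $\tau_\up$ of $\G$ induced by some $\up\in\Aut \D$ to move $(s_0,p_0)$ to a reference point and then pull back a reference peak function.

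First I would verify peak functions at the two reference points $(2,1)$ and $(0,-1)$. For $(2,1)$, take the polynomial
\[
F_1(s,p)=\tfrac14(s+2p).
\]
Every $(s,p)\in\Gamma$ satisfies $|s|\le 2$ and $|p|\le 1$, so $|F_1(s,p)|\le\tfrac14(|s|+2|p|)\le 1$; equality throughout requires $|s|=2$, $|p|=1$ and $\arg s=\arg p$, and since $|s|=2$ on $\Gamma$ forces $(s,p)=(2\omega,\omega^2)$ for some $\omega\in\T$, alignment then gives $\omega=1$. For $(0,-1)$, take
\[
F_2(s,p)=\tfrac14(s^2-4p).
\]
Writing $(s,p)=(\lambda_1+\lambda_2,\lambda_1\lambda_2)\in\Gamma$, the identity $s^2-4p=(\lambda_1-\lambda_2)^2$ gives $|F_2(s,p)|\le\tfrac14(|\lambda_1|+|\lambda_2|)^2\le 1$, with equality forcing $|\lambda_1|=|\lambda_2|=1$ and $\lambda_2=-\lambda_1$. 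At such points $F_2=\lambda_1^2$, so $F_2=1$ only when $\lambda_1\in\{1,-1\}$, i.e.\ at $(s,p)=(0,-1)$.

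Next I would handle an arbitrary $(s_0,p_0)\in b\Gamma$ by pulling back a reference peak function via a suitable $\tau_\up$. In the diagonal case with $\omega:=\omega_1=\omega_2$, take $\up(z)=\bar\omega z$; then $\tau_\up(s_0,p_0)=(2,1)$ and $g:=F_1\circ\tau_\up$ is a peak function at $(s_0,p_0)$. In the generic case, the three-real-parameter group $\Aut \D$ acts transitively on ordered pairs of distinct points of $\T$ (for example by a cross-ratio argument), so one can choose $\up$ with $\up(\omega_1)=1$ and $\up(\omega_2)=-1$; then $\tau_\up(s_0,p_0)=(0,-1)$ and $g:=F_2\circ\tau_\up$ is the desired peak function. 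One checks that $\tau_\up$ is a homeomorphism of $\Gamma$ (its rational denominator $1-\bar\alpha s+\bar\alpha^2 p$ factors as $(1-\bar\alpha\lambda_1)(1-\bar\alpha\lambda_2)$ and is bounded below by $(1-|\alpha|)^2>0$ on $\Gamma$), so $g\in A(\G)$ and $|g|<1$ on $\Gamma\setminus\{(s_0,p_0)\}$.

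The main obstacle is the dichotomy between the diagonal and the generic case: no single polynomial can serve as a peak function at every point of $b\Gamma$ simultaneously, because the local geometry of $b\Gamma$ differs at the diagonal orbit $\{(2\omega,\omega^2):\omega\in\T\}$, where the covering map $(\lambda_1,\lambda_2)\mapsto(\lambda_1+\lambda_2,\lambda_1\lambda_2)$ ramifies, and at the generic orbit. The two-case reduction via the automorphism group $\aut \G$ packages this distinction neatly into two explicit reference peak functions.
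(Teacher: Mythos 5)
Your overall architecture — split $b\Gamma$ into the diagonal and non-diagonal cases and pull back a reference peaking function by an automorphism $\tau_\up$ of $\G$ (which extends to a homeomorphism of $\Gamma$) — is sound, and your diagonal case is correct. The genuine gap is in the generic case: $F_2(s,p)=\tfrac14(s^2-4p)$ is \emph{not} a peaking function for $(0,-1)$. Your own equality analysis shows that $|F_2|=1$ exactly when $\la_2=-\la_1\in\T$, i.e. on the entire circle $\{(0,\zeta):\zeta\in\T\}\subset b\Gamma$, along which $F_2=\la_1^2$ runs over all of $\T$. A peak point requires $g(u)=1$ \emph{and} $|g(x)|<1$ for every $x\in\bar\Gamma\setminus\{u\}$, so your final claim that $|g|<1$ on $\Gamma\setminus\{(s_0,p_0)\}$ for $g=F_2\circ\tau_\up$ is false: $|g|=1$ along a whole curve in $b\Gamma$ through $(s_0,p_0)$. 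What you actually proved — that the \emph{value} $1$ is attained only at $(0,-1)$ — is weaker. Fortunately the repair is a one-liner: since $|F_2|\le 1$ on $\Gamma$ and $F_2=1$ only at $(0,-1)$, replace $F_2$ by $\tfrac12(1+F_2)$; then $\bigl|\tfrac12(1+F_2)\bigr|=1$ forces $F_2=1$, so $\tfrac12(1+F_2)$ genuinely peaks at $(0,-1)$, and its pullback by $\tau_\up$ completes your argument.

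For comparison, the paper avoids this pitfall by a different device in the non-diagonal case: instead of moving $(s_0,p_0)$ to $(0,-1)$, it takes a conformal map $\phi$ of $\D$ onto an elliptical region with major axis $(-1,1)$ and minor axis of length less than $2$, normalized so that $\phi(z_1)=1$ and $\phi(z_2)=-1$, and uses the symmetric function $\tfrac14(\phi(\zeta_1)-\phi(\zeta_2))^2$ on $\Delta^2$, which induces a function in $A(\Gamma)$. The whole point of the ellipse is the issue you ran into: the only pair of points of the closed elliptical region at distance $2$ is $\{1,-1\}$, so the maximum modulus is attained at the single point $(s_0,p_0)$, whereas your $F_2$ is precisely the degenerate case $\phi=\mathrm{id}$, for which every antipodal pair on $\T$ realizes the maximum. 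With the $\tfrac12(1+F_2)$ patch, your route (explicit polynomial reference functions plus the automorphism group of $\G$) is a perfectly good, arguably more elementary, alternative to the paper's conformal-mapping construction.
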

\begin{proof}
Consider $(s,p)=(z_1+z_2, z_1 z_2)$ where $z_1, z_2 \in \T$. 
If $z_1=z_2$ then the function $f(\zeta_1, \zeta_2) = \tfrac 14 (\zeta_1 + s)$ peaks at $(s,p)$.
If $z_1 \neq z_2$, let $\phi$ be a conformal map of $\D$ onto the open elliptic region ${\mathcal E}$ with major axis $(-1,1)$ and minor axis of length less than $2$. By Carath\'eodory's theorem, $\phi$ extends continuously to map $\Delta$ bijectively onto $\bar{\mathcal E}$.
We can suppose (replacing $\phi$ by its composition with a Blaschke factor) that $\phi(z_1)=1$ and $\phi(z_2)=-1$. The function
\[
\tilde{g}(\zeta_1, \zeta_2) = \tfrac 14 (\phi(\zeta_1) -\phi(\zeta_2))^2
\]
is a symmetric function in $A(\D^2)$ that attains its maximum modulus on $\Delta^2$ only at the points $(z_1, z_2)$ and $(z_2, z_1)$, and hence induces a function $g \in A(\Gamma)$ that peaks at $(s,p)$.
\end{proof} 

Define 
$$ K_0 \df \{ (a,s,p) \in \C^3: (s,p) \in b \Gamma, |a|= \sqrt{1 -\tfrac 14 |s|^2} \}$$
and
$$
 K_1  \df \{ (a,s,p) \in \C^3: (s,p) \in b \Gamma, |a|\le \sqrt{1 -\tfrac 14 |s|^2} \}.
$$
The set of $2 \times 2$ unitary matrices is denoted by ${\mathcal{U}}(2)$.

\begin{proposition} \label{unitary_matrix_bPP}
$ \pi({\mathcal{U}}(2)) = K_1$.  
\end{proposition}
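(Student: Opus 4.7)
The plan is to prove both inclusions separately, using Theorem \ref{4equivClosure}(4) for the forward direction and an explicit construction via $SU(2)$ conjugation of a diagonal matrix for the reverse direction. The key elementary identity that links the two formulations is that, for $\lambda_1,\lambda_2\in\T$ with $s=\lambda_1+\lambda_2$, one has
\[
\sqrt{1-\tfrac14|s|^2}=\tfrac12|\lambda_1-\lambda_2|=\tfrac12|1-\bar\lambda_2\lambda_1|,
\]
since $|\lambda_1+\lambda_2|^2+|\lambda_1-\lambda_2|^2=4$ when $|\lambda_j|=1$, and $|1-\bar\lambda_2\lambda_1|=|\lambda_2||\lambda_2-\lambda_1|$.

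For the inclusion $\pi(\mathcal{U}(2))\subseteq K_1$, I would argue as follows. If $U\in\mathcal{U}(2)$ then $\|U\|=1$, so $U\in\bar\B$ and hence $\pi(U)\in\bar\PP$; the eigenvalues $\lambda_1,\lambda_2$ of $U$ lie on $\T$, so $(\tr U,\det U)\in b\Gamma$. Writing $\pi(U)=(u_{21},s,p)$ with $(s,p)=(\lambda_1+\lambda_2,\lambda_1\lambda_2)$, Theorem \ref{4equivClosure}(4) (noting that $|p|=1$ forces $\beta=\tfrac12 s$, so that equivalence applies) gives
\[
|u_{21}|\le\tfrac12|1-\bar\lambda_2\lambda_1|+\tfrac12(1-|\lambda_1|^2)^{1/2}(1-|\lambda_2|^2)^{1/2}=\tfrac12|\lambda_1-\lambda_2|=\sqrt{1-\tfrac14|s|^2},
\]
so $\pi(U)\in K_1$.

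For the reverse inclusion $K_1\subseteq\pi(\mathcal{U}(2))$, given $(a,s,p)\in K_1$ I would produce a unitary $U$ with $\pi(U)=(a,s,p)$ by conjugation. Write $(s,p)=(\lambda_1+\lambda_2,\lambda_1\lambda_2)$ with $\lambda_j\in\T$, set $D=\diag\{\lambda_1,\lambda_2\}$, and for parameters $\alpha,\beta\in\C$ with $|\alpha|^2+|\beta|^2=1$ let
\[
V=\bbm \alpha & -\bar\beta\\ \beta & \bar\alpha\ebm\in SU(2).
\]
A short direct computation shows that $U=VDV^*$ satisfies $\tr U=s$, $\det U=p$, and $u_{21}=\bar\alpha\beta(\lambda_1-\lambda_2)$. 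If $\lambda_1=\lambda_2$ then $a=0$ by the defining inequality of $K_1$, and $U=\lambda_1 I$ works; otherwise I must solve $\bar\alpha\beta=a/(\lambda_1-\lambda_2)$, a complex number of modulus at most $\tfrac12$ by hypothesis. The map $(\alpha,\beta)\mapsto\bar\alpha\beta$ from the unit sphere $|\alpha|^2+|\beta|^2=1$ onto the closed disc of radius $\tfrac12$ is surjective (write $|\alpha|=r$, $|\beta|=\sqrt{1-r^2}$, and observe $r\sqrt{1-r^2}$ covers $[0,\tfrac12]$, with the phase handled by choosing $\arg\beta-\arg\alpha$), so a suitable $V$ exists and $\pi(U)=(a,s,p)$.

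The only step that requires any real thought is the parameter-counting claim of surjectivity for $\bar\alpha\beta$ on the sphere, and this is entirely elementary; the bulk of the proof is just the two routine verifications $\tr(VDV^*)=\tr D$, $\det(VDV^*)=\det D$ and the single $(2,1)$-entry calculation.
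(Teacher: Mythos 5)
Your proof is correct and follows essentially the same route as the paper: the forward inclusion via Theorem \ref{4equivClosure} with the identity $\tfrac12|1-\bar\lambda_2\lambda_1|=\sqrt{1-\tfrac14|s|^2}$ for $\lambda_j\in\T$, and the reverse inclusion by conjugating $\diag\{\lambda_1,\lambda_2\}$ by a unitary and solving for the $(2,1)$ entry. The only cosmetic difference is that you parametrize the conjugating unitary by a general element of $SU(2)$ and note that $(\alpha,\beta)\mapsto\bar\alpha\beta$ covers the closed disc of radius $\tfrac12$, whereas the paper uses the equivalent parametrization by $\theta\in\R$, $\eta\in\T$ and solves $\sin(2\theta)=\tfrac{a}{w}\eta$.
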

\begin{proof}
 By Theorem \ref{4equivClosure}, $\pi({\mathcal{U}}(2)) \subset \bar\PP$ and $|a| \le \half |1-\bar\la_2 \la_1|= \sqrt{1- \tfrac 14 |s|^2}.$ Thus $\pi({\mathcal{U}}(2)) \subset K_1$.

Suppose $(a,s, p) \in  K_1$. To prove that $\pi({\mathcal{U}}(2)) = K_1$ we need to find  a $2 \times 2$ unitary matrix $U$ 
such that $(a,s, p)=\pi(U)$. Since $(s,p) \in 
b \Gamma$ there exist $  \la_1, \la_2 \in
\mathbb{T}$ such that $s = \la_1+\la_2$ and $p= \la_1 \la_2$.
Let 
 \begin{equation*}
 U = V^* \left[ \begin{array}{cc} \la_1 &  0 \\ 0 & \la_2
\end{array} \right] V,
\end{equation*}
where, for some $\eta \in \mathbb{T}$ and $\theta \in \R$,
 \begin{equation*}
 V = \left[ \begin{array}{cc} \cos \theta & \eta \sin \theta  \\ 
-\sin \theta  & \eta \cos \theta
\end{array} \right].
\end{equation*}
Thus
 \begin{equation*}
 U = \left[ \begin{array}{cc} \la_1 \cos^2 \theta  + \la_2 \sin^2 \theta  & (\la_1\eta -\la_2\eta)  \sin \theta \cos \theta \\ 
(\la_1 \bar\eta -\la_2 \bar\eta)  \sin \theta \cos \theta  & \la_1 \sin^2 \theta  + \la_2 \cos^2 \theta
\end{array} \right]
\end{equation*}
is a unitary matrix.
Let $w = \half (\la_1 - \la_2)$. For $(a,s, p) \in  K_1$, we have
$|a| \le |w|$. We need to find $\eta \in \mathbb{T}$ and $\theta \in \R$
such that $a =  \bar\eta w \sin (2\theta)$.

If $w=0$, then $a=0$, and one can take 
 \begin{equation*}
 U = \left[ \begin{array}{cc} \la_1 &  0 \\ 0 & \la_2
\end{array} \right].
\end{equation*}
If $w \neq 0$, then $|\frac{a}{w}| \le 1$. We can choose $\eta \in \mathbb{T}$ such that $\frac{a}{w} \eta \in \R $, and choose
 $\theta \in \R$
such that $\sin (2\theta)= \frac{a}{w}\eta $.
Then $(a,s, p)=\pi(U)$.  Hence $\pi({\mathcal{U}}(2)) = K_1$.
\end{proof}
We shall use the notation $D(a; r)$ to mean the open disc centred at $a\in\C$ with radius $r>0$.
\begin{proposition} \label{closed_boundariesbPP}
The subsets $K_0$ and $K_1$ of $\bar\PP$ are closed boundaries for $A(\PP)$.
\end{proposition}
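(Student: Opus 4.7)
The strategy is to prove, for every $f \in A(\PP)$, the two equalities
\[
\|f\|_{\bar\PP} \;=\; \|f\|_{K_1} \;=\; \|f\|_{K_0};
\]
both $K_0$ and $K_1$ are manifestly closed subsets of $\bar\PP$, so this will suffice.

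For the first equality I exploit the polynomial (hence holomorphic) map $\pi$. Since $\pi(\bar\B)=\bar\PP$ (Theorem \ref{4equivClosure}, (1)$\Leftrightarrow$(6)) and $\pi(\mathcal{U}(2))=K_1$ (Proposition \ref{unitary_matrix_bPP}), and since $f\circ\pi \in A(\B)$ for any $f\in A(\PP)$, it is enough to show that $\mathcal{U}(2)$ is a closed boundary for $A(\B)$. This follows directly from the singular value decomposition: given $A\in\bar\B$, write $A=V\Sigma W^*$ with $\Sigma=\diag(s_1,s_2)$, $0\leq s_j\leq 1$, and $V,W\in\mathcal{U}(2)$; for any $g\in A(\B)$, the map $(z_1,z_2)\mapsto g(V\diag(z_1,z_2)W^*)$ lies in $A(\D^2)$ and so attains its maximum modulus on $\T^2$, a set which $V\diag(\cdot)W^*$ sends into $\mathcal{U}(2)$. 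Hence $\|f\|_{\bar\PP}=\|f\circ\pi\|_{\bar\B}=\|f\circ\pi\|_{\mathcal{U}(2)}=\|f\|_{K_1}$.

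For the second equality I fix $(s,p)\in b\Gamma$, write $s=\la_1+\la_2$, $p=\la_1\la_2$ with $\la_j\in\T$, and set $R=\half|\la_1-\la_2|=\sqrt{1-\tfrac14|s|^2}$, so that the fibre of $\bar\PP$ over $(s,p)$ is the closed disc $\bar D(0,R)$ while the fibre of $K_0$ is the boundary circle $\{|a|=R\}$. The key claim is that $a\mapsto f(a,s,p)$ is holomorphic on the open disc $D(0,R)$; once this is in hand, the classical maximum modulus principle applied fibrewise yields $\|f\|_{K_1}=\|f\|_{K_0}$. This is the main obstacle, because the open fibre lies entirely in $\partial\PP$, so holomorphy on it is not automatic from $f\in A(\PP)$. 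I proceed by approximating $(s,p)$ from inside $\G$: for $r\in(0,1)$ set $(s_r,p_r)=(r\la_1+r\la_2,\,r^2\la_1\la_2)\in\G$, whose $\bar\PP$-fibre has radius
\[
R_r=\half\bigl|1-r^2\bar\la_2\la_1\bigr|+\half(1-r^2).
\]
A short calculation using $|\la_j|=1$ yields $|1-r^2\bar\la_2\la_1|^2=r^2|\la_1-\la_2|^2+(1-r^2)^2$, and then the elementary bound $|\la_1-\la_2|\leq 2$ gives $R_r\geq R$ for all $r\in(0,1]$. Consequently, for every $a$ with $|a|<R$ and every $r\in(0,1)$ we have $(a,s_r,p_r)\in\PP$, and $h_r(a):=f(a,s_r,p_r)$ is holomorphic in $a$ on $D(0,R_r)\supseteq D(0,R)$, uniformly bounded by $\|f\|_{\bar\PP}$. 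As $r\to 1^-$, continuity of $f$ on $\bar\PP$ gives $h_r(a)\to f(a,s,p)$ pointwise, so by the Vitali--Montel theorem the convergence is locally uniform on $D(0,R)$ and the limit $a\mapsto f(a,s,p)$ is holomorphic there, completing the proof.
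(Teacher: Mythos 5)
Your proof is correct, and while your $K_1$ half follows the same route as the paper (pull $f$ back to $\bar\B$ via $\pi$ and use that the unitary group is a boundary for $A(\B)$; the paper cites Clerc for this fact, whereas you reprove it via the singular value decomposition and the bidisc, which makes that step self-contained), your treatment of $K_0$ is genuinely different. The paper argues by contradiction: if the maximum of $|f|$ over the closed fibre disc above $(s,p)\in b\Gamma$ were attained only at an interior point $a$, continuity would force the same strict inequality on the slightly shrunk disc $rD(0;\sqrt{1-\tfrac14|s|^2})\times\{(rs,rp)\}$, which lies in $\PP$ by the starlike property, contradicting the maximum principle for the genuinely holomorphic function $f(\cdot,rs,rp)$; it never needs $f$ to be holomorphic on the boundary fibre itself. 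You instead prove the stronger statement that $a\mapsto f(a,s,p)$ \emph{is} holomorphic on the open fibre $D(0,R)$, by approximating $(s,p)=(\la_1+\la_2,\la_1\la_2)$ from inside $\G$ through $(s_r,p_r)=(r\la_1+r\la_2,r^2\la_1\la_2)$, checking via the identity $|1-r^2\bar\la_2\la_1|^2=r^2|\la_1-\la_2|^2+(1-r^2)^2$ that the fibre radii satisfy $R_r\ge R$ (your computation is right, since $|\la_1-\la_2|\le 2$), and then invoking Vitali--Montel on the uniformly bounded family $h_r$; after that the classical maximum modulus principle finishes the job fibrewise, the degenerate fibres with $|s|=2$ being trivial. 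Your route costs a normal-families argument but buys a cleaner intermediate fact of independent interest (holomorphy of every $f\in A(\PP)$ on the analytic discs sitting in $\partial\PP$ over $b\Gamma$) and avoids any appeal to Theorem \ref{thm-starlike}; the paper's route is softer, using only continuity together with the interior maximum principle on fibres strictly inside $\PP$.
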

\begin{proof}
 To show that $K_1$ is a closed boundary for $A(\PP)$
consider any $f \in A(\PP)$. Then $ f \circ \pi \in A(\mathbb{B})$, where $\mathbb{B}$ is the $2 \times 2$ matrix ball. Since  ${\mathcal{U}}(2)$ is the distinguished boundary of $\mathbb{B}$ \cite[Section 4.6]{clerc}, there exists $U \in {\mathcal{U}}(2)$ such that $ f \circ \pi$ attains its maximum modulus at $U$. Hence $f$ attains its  maximum modulus at $\pi(U)$. 
Therefore $\pi({\mathcal{U}}(2))$ is a closed boundary for $A(\PP)$.
By Proposition \ref{unitary_matrix_bPP},
$\pi({\mathcal{U}}(2)) = K_1$.

 Let us show that $K_0$ is a closed boundary for $A(\PP)$.
Consider $f \in A(\PP)$. Since $K_1$ is a closed boundary for $A(\PP)$,
there exists $(s,p) \in  b \Gamma$ such that $f$ attains its  maximum modulus on the disc 
\[
D(0;\sqrt{1 -\tfrac 14 |s|^2}) \times  \{(s,p) \} \subset \partial \PP,
\]
say at the point $(a,s,p)$. Then $f$ must also attain its  maximum modulus at a point $(a_0,s,p)$ for some $a_0$ such that $|a_0|=\sqrt{1 -\tfrac 14 |s|^2}$.
Otherwise
\[
|f(a,s,p)| > \sup_{|z| = \sqrt{1 -\tfrac 14 |s|^2}} | f(z,s,p)|.
\] 
It follows that, for some $r \in (0,1)$ sufficiently close to $1$,
\[
|f(ra,rs,rp)| > \sup_{|\theta| = r \sqrt{1 -\tfrac 14 |s|^2}} | f(\theta,r s,r p)|.
\] 
Since $f$ is analytic in a neighbourhood of the disc
\[
r D(0;\sqrt{1 -\tfrac 14 |s|^2}) \times  \{(rs,rp) \},
\]
which is a subset of $\PP$ by the starlike property of $\PP$, this contradicts the maximum principle applied to $ f(\cdot, rs, rp)$.

Thus $f$ attains its  maximum modulus at a point  of $K_0$. Hence $K_0$ is a closed boundary for $A(\PP)$.
\end{proof}
\begin{theorem} \label{characbPP}
For $x \in \C^3$, the following are equivalent.
\begin{enumerate}
\item[(1)] $ x \in K_0$;
\item[(2)] $x $ is a peak point of $\bar\PP$;
\item[(3)]  $x \in b\PP$, the distinguished boundary of $\PP$.
\end{enumerate}
Therefore 
\[
b\PP = \{ (a,s,p)\in \C^3: (s,p) \in b\Gamma, |a|= \sqrt{1 - \tfrac 14 |s|^2} \}
\]
and so 
\[
b\PP = \{(a,s,p) \in \C^3: |s| \leq 2, |p| = 1, s = \bar s p
 \mbox{  and  } |a|= \sqrt{1 - \tfrac 14 |s|^2} \}.
\]
\end{theorem}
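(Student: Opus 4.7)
The approach is the cycle $(1) \Rightarrow (2) \Rightarrow (3) \Rightarrow (1)$, with the explicit formula then following by unwinding the definitions. The implication $(2) \Rightarrow (3)$ is standard: a peak function $g$ at $x$ forces every closed boundary to contain $x$, hence $x \in b\PP$. For $(3) \Rightarrow (1)$ I would invoke Proposition \ref{closed_boundariesbPP}, which exhibits $K_0$ as a closed boundary of $A(\PP)$, so $b\PP \subseteq K_0$. The final explicit formula for $b\PP$ then follows by combining $b\PP = K_0$ with the standard reformulation $b\Gamma = \{(s, p) : |p| = 1,\ s = \bar s p,\ |s| \leq 2\}$, easily extracted from the description $\{(z_1 + z_2, z_1 z_2) : z_j \in \T\}$ of \cite[Theorem 2.4]{AY04}.

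The substance lies in $(1) \Rightarrow (2)$: every $x_0 \in K_0$ must be realised as a peak point. My plan is to exploit the group $\chi(\mathcal F)$ of automorphisms from Corollary \ref{7.5}, each of which by Proposition \ref{7.6} extends holomorphically to a neighbourhood of $\bar\PP$. Direct inspection of the formula for $f_{\omega\ups}$ shows it preserves $K_0$, and the action on $K_0$ has exactly two orbits: the \emph{cusp orbit} $\{(0, 2\la, \la^2) : \la \in \T\}$, where the two ``eigenvalues'' coincide, and the complementary \emph{regular orbit}. Transitivity within each orbit follows from the transitivity of $\aut \D$ on $\T$ (respectively on ordered pairs of distinct points of $\T$), augmented by the free phase $\omega \in \T$ acting on the $a$-coordinate. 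It therefore suffices to exhibit a peak function at one representative per orbit, say $(0, 2, 1)$ for the cusp and $(1, 0, -1)$ for the regular orbit; pullbacks then handle the rest.

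For $(0, 2, 1)$ I propose $g_1(a, s, p) = \tfrac{1}{3}(s + p)$. This lies in $A(\PP)$, takes the value $1$ at $(0, 2, 1)$, and satisfies $|g_1| \leq \tfrac{1}{3}(|s| + |p|) \leq 1$ on $\bar\PP$ since $|s| \leq 2$ and $|p| \leq 1$ on $\bar\Gamma$. Equality forces $|s + p| = |s| + |p| = 3$, whence $(s, p) = (2, 1)$; Theorem \ref{4equivClosure}(4) then gives $|a| \leq 0$, so $x = (0, 2, 1)$. For $(1, 0, -1)$ I choose (via Proposition \ref{peak_points_G}) a peak function $h \in A(\Gamma)$ at $(0, -1) \in b\Gamma$ and set $g_2(a, s, p) = a \cdot h(s, p)$. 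Since $\bar\PP = \pi(\bar\B)$ (Theorem \ref{4equivClosure}(6)), $|a| \leq 1$ throughout $\bar\PP$, so $|g_2| \leq 1$, and $g_2(1, 0, -1) = 1$. If $|g_2(x)| = 1$ then $|a| = 1$ and $|h(s, p)| = 1$; the latter pins $(s, p) = (0, -1)$, over which the pentablock fibre is the closed unit disc in $a$, and on this fibre $g_2$ reduces to the coordinate $a$, forcing $x = (1, 0, -1)$.

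The main obstacle I foresee is the orbit analysis of $\chi(\mathcal F)$ on $K_0$ together with verification that the two products $g_i$ really peak only at the stated representatives; the cusp case is particularly delicate because one must observe that the fibre of $\bar\PP$ over $(2, 1)$ degenerates to a single point. Once that is done, the pullback $g_i \circ \phi$ along any $\phi \in \chi(\mathcal F)$ sending $x_0$ to the appropriate representative is a peak function at $x_0$, completing $(1) \Rightarrow (2)$.
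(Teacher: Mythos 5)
Your overall architecture — the cycle $(1)\Rightarrow(2)\Rightarrow(3)\Rightarrow(1)$, with $(3)\Rightarrow(1)$ delegated to Proposition \ref{closed_boundariesbPP} and $(1)\Rightarrow(2)$ handled by moving a general point of $K_0$ via the automorphisms $f_{\omega\ups}$ to a fibre over $(0,-1)$ (or to the cusp) and then peaking there with a product of a fibre factor and a peak function of $A(\Gamma)$ — is essentially the paper's proof, and your verification that the $f_{\omega\ups}$ preserve $K_0$, your orbit reduction, and your cusp function $\tfrac13(s+p)$ all work (for the cusp the paper peaks directly at each $(0,2\la_1,\la_1^2)$ without invoking automorphisms, but that is a cosmetic difference).

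There is, however, a genuine gap in the regular-orbit case: the function $g_2(a,s,p)=a\,h(s,p)$ does \emph{not} peak at $(1,0,-1)$. A peaking function must satisfy $|g_2(x)|<1$ for every $x\in\bar\PP\setminus\{(1,0,-1)\}$, but the fibre of $\bar\PP$ over $(0,-1)$ is the full closed disc $\{|a|\le 1\}$, and for every $\zeta\in\T$ one has $|g_2(\zeta,0,-1)|=|\zeta|\,|h(0,-1)|=1$. So $|g_2|$ attains its maximum on the whole circle $\T\times\{(0,-1)\}$, and your inference ``$|g_2(x)|=1$ forces $x=(1,0,-1)$'' is false: it only forces $(s,p)=(0,-1)$ and $|a|=1$. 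In particular $g_2$ only shows that every closed boundary meets that circle, not that it contains the point $(1,0,-1)$, so $(1)\Rightarrow(2)$ is not established for the regular orbit. The repair is exactly the device the paper uses: replace the fibre factor $a$ by $\half(1+a)$ (or, without the $\omega$-rotation, $\half(b+a)$ at the image point $(b,0,-1)$ with $|b|=1$). Since $|a|\le1$ on $\bar\PP$, one has $|\,\half(1+a)\,|\le 1$ with equality if and only if $a=1$, so $\half(1+a)\,h(s,p)$ peaks precisely at $(1,0,-1)$, and pulling back along your automorphism finishes the argument as you intended.
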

\begin{proof} 
\noindent (1)$\Rightarrow$(2)
We will exhibit a peaking function for an arbitrary point $(a,s,p)\in K_0$.  

Since $(s,p)\in b\Gamma$ there exist $\la_1,\la_2\in\T$ such that $s=\la_1+\la_2, \, p=\la_1\la_2$.
Consider first the case that $\la_1=\la_2$.  Then $|s|=2$ and so $|a|^2=1-\quart|s|^2=0$.  Thus
$(a,s,p)=(0,2\la_1, \la_1^2)$.   Let $f(x)=2\la_1+x_2$.  Clearly $|f|\leq 4$ on $\bar\ccc$, attained for $x\in\bar\ccc$ such that $x_2=2\la_1$.  The only such $x\in\bar\ccc$ is $x=(0,2\la_1, \la_1^2)$, and so $f$ is a peaking function for $(a,s,p)$.

Now suppose that $\la_1\neq\la_2$.  Choose an automorphism $\upsilon$ of $\D$ such that $\up(\la_1)=1$ and $\up(\la_2)=-1$.   The automorphism $\tau_\up$ of $\G$ induced by $\up$ (or more precisely, the continuous extension of $\tau_\up$ to $\Gamma$) maps $(s,p)$ to $(0,-1)$.    By Theorem \ref{aut_PP}, $\up$ induces an automorphism $\kappa$ of $\PP$ which extends analytically to a neighbourhood of $\bar\ccc$ and is bijective on $\bar\ccc$. This $\kappa$ maps $(a,s,p)$ to a point $(b,0,-1)$ for which $|b|=1$.   Consider the function $f(x) = (b + x_1) g (x_2, x_3)$ where
$ g \in A(\Gamma)$ peaks at $(0,-1)$ and $g(0,-1) =1$. Then $\|f \|_\infty =2$ and $|f(b,0,-1) | =2$, and if $|f(x)| =2$ for some $x \in \bar\PP$ then $|b + x_1| =2$ and 
$| g (x_2, x_3)| =1$. Hence $x_1 =b$ and $(x_2,x_3) = (0,-1)$, that is,  $f$
peaks at $(b,0,-1)$ and consequently $f \circ \kappa$ is  a peaking function for $\kappa^{-1} (b,0,-1)= (a,s,p)$.
Thus {\rm (1)}$\Rightarrow${\rm (2)}.

\noindent (2)$\Rightarrow$(3) holds since peak points always belong to the distinguished boundary.

\noindent  {\rm (3)}$\Rightarrow${\rm (1)} is 
Proposition \ref{closed_boundariesbPP}.

Thus (1), (2) and (3) are equivalent.

 By Theorem \ref{characbPP},
\[
b\PP = \{ (a,s,p)\in \C^3: (s,p) \in b\Gamma, |a|= \sqrt{1 - \tfrac 14 |s|^2} \}.
\]
As in \cite{AY04} an element
$(s,p) \in \C^2$ lies in $b\Gamma$ if and only if
\begin{equation}
\label{rband}
|s| \leq 2 \mbox{  and  } |p| = 1 \mbox{  and  } s = \bar s p.
\end{equation}
\end{proof}

\begin{theorem}\label{top-dist-boundary} The distinguished boundary $b\PP$ is homeomorphic to
$$
\{(\sqrt{1-x^2}\omega, x, \theta): -1 \le x \le 1,\quad 0 \le \theta \le 2 \pi, \quad \omega\in\T\}
$$
with the two points $(\sqrt{1-x^2} \omega, x, 0)$ and $(\sqrt{1-x^2} \omega,-x, 2\pi)$ identified for every $\omega\in\T$ and $x\in [-1,1]$.
\end{theorem}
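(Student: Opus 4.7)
The plan is to make the description in Theorem \ref{characbPP} explicit by parametrising the Möbius band $b\Gamma$ and then lifting to $b\PP$.

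First I would parametrise $b\Gamma$. An element $(s,p) \in b\Gamma$ satisfies $|p|=1$, $|s|\le 2$ and $s=\bar s p$. Writing $p=e^{i\theta}$ with $\theta\in[0,2\pi]$, the relation $s=\bar sp$ forces $se^{-i\theta/2}\in\R$, so $s=2xe^{i\theta/2}$ with $x\in\R$ and $|x|\le 1$. Conversely every such $(2xe^{i\theta/2},e^{i\theta})$ is in $b\Gamma$. At the seam the pairs $(\theta=0,x)$ and $(\theta=2\pi,-x)$ give the same $(s,p)=(2x,1)$. By Theorem \ref{characbPP}, $|a|=\sqrt{1-\tfrac14|s|^2}=\sqrt{1-x^2}$, so $a=\sqrt{1-x^2}\,\omega$ for some $\omega\in\T$ (unique when $|x|<1$, arbitrary when $|x|=1$ since then $a=0$).

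Next, let
\[
S=\bigl\{(z,x,\theta)\in\C\times[-1,1]\times[0,2\pi]:\ |z|=\sqrt{1-x^2}\bigr\},
\]
equipped with the subspace topology, and let $\sim$ be the equivalence relation generated by $(z,x,0)\sim(z,-x,2\pi)$ for all admissible $(z,x)$. I would define
\[
\Psi:S\longrightarrow b\PP,\qquad \Psi(z,x,\theta)=\bigl(z,\,2xe^{i\theta/2},\,e^{i\theta}\bigr).
\]
The verification that $\Psi(z,x,\theta)\in b\PP$ is a direct check against Theorem \ref{characbPP}: $|p|=1$, $|s|=2|x|\le 2$, $\bar sp=2xe^{-i\theta/2}e^{i\theta}=s$, and $|a|^2=1-x^2=1-\tfrac14|s|^2$. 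Continuity is immediate, and the discussion in the previous paragraph shows $\Psi$ is surjective.

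Then I would identify the fibres of $\Psi$. If $\Psi(z,x,\theta)=\Psi(z',x',\theta')$, comparing third coordinates gives $e^{i\theta}=e^{i\theta'}$, so either $\theta=\theta'$ or $\{\theta,\theta'\}=\{0,2\pi\}$. In the first case the second coordinate forces $x=x'$ and then $z=z'$. In the second case $p=1$; with $\theta=0$, $\theta'=2\pi$ the second coordinate gives $2x=-2x'$, so $x'=-x$, and $z=z'$ is forced by the first coordinate. Either way the identification is exactly the one defining $\sim$ (including the degenerate case $|x|=1$, where $z=z'=0$ automatically). Hence $\Psi$ descends to a continuous bijection $\bar\Psi:S/\!\sim\ \to b\PP$.

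Finally I would close the argument by a standard compactness step: $S$ is a closed subset of the compact set $\bar{D(0;1)}\times[-1,1]\times[0,2\pi]$, hence $S/\!\sim$ is compact; $b\PP\subset\C^3$ is Hausdorff; a continuous bijection from a compact space to a Hausdorff space is a homeomorphism, so $\bar\Psi$ is the required homeomorphism. The only delicate point is making sure the equivalence relation stated in the theorem really captures every coincidence of $\Psi$, in particular at the degenerate locus $|x|=1$; I expect this to be the main place where care is needed, but it is handled by the observation that $\sqrt{1-x^2}=0$ there forces $z=0$ in $S$, so no extra identification is required beyond the one displayed.
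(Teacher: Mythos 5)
Your proposal is correct and takes essentially the same route as the paper: parametrise $b\Gamma$ by $p=e^{i\theta}$, $s=2xe^{i\theta/2}$ with $x\in[-1,1]$ (the paper obtains this from $z_1,z_2\in\T$ via $z_1+z_2=e^{i\theta/2}\,2\,\mathrm{Re}(z_1e^{-i\theta/2})$ rather than from $s=\bar sp$, $|p|=1$), note $|a|=\sqrt{1-x^2}$, and identify the seam $(z,x,0)\sim(z,-x,2\pi)$. Your explicit fibre analysis and the compact-to-Hausdorff argument merely spell out the topological verification that the paper's proof leaves implicit.
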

\begin{proof} We have
\[
 b\PP = \{ (a,s,p)\in \C^3: (s,p) \in b\Gamma, |a|= \sqrt{1 - \tfrac 14 |s|^2} \}
\]
\[
= \{ (a,z_1+z_2, z_1z_2)\in \C^3: z_1, z_2 \in
\mathbb{T} \; \text{and} \; |a|= \sqrt{1 - \tfrac 14 |z_1+z_2|^2} \}.
\]
Let us
write $z_1z_2 = e^{i\theta}$: then
$$z_1+z_2 = z_1+\overline z_1 e^{i\theta} = e^{i\theta/2}\ 2\
\mathrm{Re}(z_1 e^{-i\theta/2}),$$
and we may parametrize $b\PP$ by
$$
b\PP = \{(\sqrt{1-x^2}e^{i\eta}, 2xe^{i\theta/2}, e^{i\theta}): -1 \le x \le 1,\quad 0 \le \theta \le 2 \pi, \quad 0 \le \eta \le 2 \pi\}.
$$
Thus $b\PP$ is homeomorphic to the set
$$
\{(\sqrt{1-x^2} e^{i\eta}, x, \theta): -1 \le x \le 1,\quad 0 \le \theta \le 2 \pi, \quad 0 \le \eta \le 2 \pi\}
$$
with the points $(\sqrt{1-x^2} e^{i \eta}, x, 0)$ and $(\sqrt{1-x^2} e^{i\eta},-x, 2\pi)$ identified for every $\eta: 0 \le \eta \le 2 \pi$.
\end{proof}

\section{The real pentablock $\ccc\cap\R^3$} \label{real_pentablock}

  We shall show that the real pentablock is a convex body bounded by five faces, comprising two triangles, an ellipse and two curved surfaces.

It will be helpful if we first recall the shape of the real symmetrised bidisc.

\begin{proposition}\label{pr-realgamma}
$\Gamma \cap \R^2$ is the isosceles triangle with vertices $(\pm 2,1)$ and $(0,-1)$ together with its interior.
\end{proposition}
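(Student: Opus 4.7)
My plan is to identify $\Gamma\cap\R^2$ by specializing the inequality characterization of $\Gamma$ to real points. Specifically, I will use the closed analogue of condition (2) in Theorem \ref{recapG}, namely that $(s,p)\in\Gamma$ if and only if $|p|\leq 1$ and $|s-\bar s p|\leq 1-|p|^2$. (The closed version follows from the open version by a simple scaling/limiting argument: $(s,p)\in\Gamma$ iff $(rs,r^2 p)\in\G$ for all $r\in(0,1)$, which by Theorem \ref{recapG}(2) yields the weak inequality in the limit; conversely, the inequality forces $(s,p)$ to be a limit of points in $\G$.)

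Now I would restrict attention to $(s,p)\in\R^2$. Since $s$ and $p$ are real, the defining inequality reads
\[
|s|\,|1-p|\leq 1-p^2=(1-p)(1+p), \qquad |p|\leq 1.
\]
When $p<1$, dividing by the positive factor $1-p$ produces $|s|\leq 1+p$, i.e.\ $p\geq |s|-1$. When $p=1$ both sides vanish and the inequality is automatic, while from $|p|\leq 1$ one sees $s$ must only satisfy $|s|\leq 2$, which is the same as $p\geq |s|-1$ at $p=1$. Thus
\[
\Gamma\cap\R^2 = \{(s,p)\in\R^2 : |s|-1\leq p\leq 1\}.
\]

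The final step is the geometric identification: the three lines $p=1$, $p=s-1$, and $p=-s-1$ intersect pairwise at $(2,1)$, $(-2,1)$, and $(0,-1)$, and the region $\{(s,p):p\leq 1,\ p\geq s-1,\ p\geq -s-1\}$ is precisely the closed isosceles triangle with these vertices. Automatically $|s|\leq 1+p\leq 2$, so no separate bound on $s$ is needed. This identifies $\Gamma\cap\R^2$ with the claimed triangle and its interior.

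I do not anticipate any real obstacle here; the main point to be a little careful about is the passage from the open characterization in Theorem \ref{recapG} to its closed counterpart, which I would dispatch by the scaling argument indicated above (or by referring the reader to the analogous closure arguments already used earlier in the paper, for instance in the proof of Proposition \ref{anothersuff}).
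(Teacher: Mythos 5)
Your reduction to the real inequality $|s|\,(1-p)\le(1-p)(1+p)$ and the factorization for $p<1$ are the same computation as the paper's, but the closed characterization on which you base the whole argument is false, and it fails exactly on the edge of the triangle you need to control. The conditions $|p|\le 1$ and $|s-\bar s p|\le 1-|p|^2$ do \emph{not} imply $(s,p)\in\Gamma$: when $|p|=1$ the inequality degenerates to $0\le 0$ and places no restriction on $s$, so for instance $(s,p)=(4,1)$ satisfies both conditions while $(4,1)\notin\Gamma$ (every point of $\Gamma$ has $|s|=|\la_1+\la_2|\le 2$). The same example shows that the ``conversely'' half of your scaling argument cannot be repaired as stated: $(4,1)$ is not a limit of points of $\G$. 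This defect propagates into your real analysis at $p=1$: under your characterization every real $s$ would be admitted there, and your remark that ``from $|p|\le 1$ one sees $|s|\le 2$'' is a non sequitur --- nothing you have assumed bounds $s$ when $p=1$. Taken literally, your argument would place the entire line $\{(s,1):s\in\R\}$ inside $\Gamma\cap\R^2$.

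The repair is small. One option is to add the elementary bound $|s|\le 2$ on $\Gamma$ (immediate from $s=\la_1+\la_2$ with $|\la_1|,|\la_2|\le 1$), which settles the $p=1$ case; the forward half of your scaling argument is valid and handles $p<1$, and the inclusion of the closed triangle in $\Gamma\cap\R^2$ follows since the closed triangle is the closure of the open one and $\overline{\G\cap\R^2}\subseteq\Gamma\cap\R^2$. A cleaner option is to use the correct closed characterization already quoted in the paper just before Theorem \ref{4equivClosure} (from \cite[Theorem 1.1]{AYmodel}): $(s,p)\in\Gamma$ if and only if $|p|\le 1$ and $s=\beta+\bar\beta p$ for some $|\beta|\le 1$; for real data this gives $|s|\le|\beta|(1+p)\le 1+p$ in both directions with no case split. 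The paper itself sidesteps the issue by applying Theorem \ref{recapG}(2) only to the open domain: for real $(s,p)$, $(s,p)\in\G$ if and only if $|p|<1$ and $|s|<1+p$, so $\G\cap\R^2$ is the open triangle, and the closed statement follows by taking closures, since the scaling $(s,p)\mapsto(rs,r^2p)$, $r\to 1-$, preserves realness and shows $\Gamma\cap\R^2=\overline{\G\cap\R^2}$.
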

\begin{proof} 
By Theorem \ref{recapG} 
if $s$ and $p$ are real, then
\begin{eqnarray}\label{GammaReal}
(s, p) \in \G &\Leftrightarrow& |s(1-p)| < 1 -p^2 \nn \\
&\Leftrightarrow& |p|  < 1\ \  \mbox{and}\ \ |s| < 1+p.
\end{eqnarray}
Thus the plane $\mathrm{Im}\ s = \mathrm{Im}\ p =0$ intersects $\G$ in the
interior of the isosceles triangle with vertices at $(0, -1)$
and $(\pm 2, 1)$.  
\end{proof}
\begin{figure}\label{realgamma}
\includegraphics{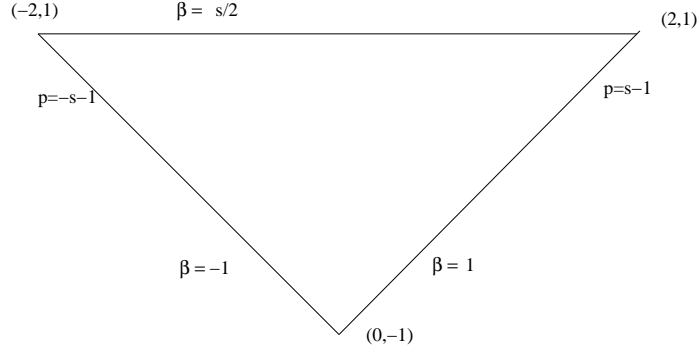}
\caption{The real symmetrised bidisc}
\end{figure}
Figure 1 indicates the values of the parameter $\beta$, where $s=\beta+\bar\beta p$, on the sides of the triangle.  At the vertex $(0,-1)$, one can take $\beta$ to be any real number.

Although $\PP$ is not convex, $\PP\cap \R^3$ {\em is}.

\begin{theorem} The  real pentablock
$\PP\cap \R^3$ is convex.
\end{theorem}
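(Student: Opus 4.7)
The plan is to reduce convexity of $\PP\cap\R^3$ to concavity of an explicit radius function on the real symmetrised bidisc.

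By Theorem \ref{4equiv}(4), a point $(a,s,p)\in\R^3$ lies in $\PP$ if and only if $(s,p)\in\G\cap\R^2$ and $|a|<\rho(s,p)$, where
\[
\rho(s,p)=\half|1-\bar\la_2\la_1|+\half(1-|\la_1|^2)^{1/2}(1-|\la_2|^2)^{1/2}
\]
and $\la_1,\la_2$ are the roots of $\la^2-s\la+p=0$. The first step is to compute $\rho$ explicitly, treating separately the cases of real roots ($s^2\ge 4p$) and complex conjugate roots ($s^2<4p$). In the real case, $\bar\la_2\la_1=p$ and $(1-\la_1^2)(1-\la_2^2)=(1+p)^2-s^2$; in the complex case, $\bar\la_2\la_1=\la_1^2$ and a short expansion of $|1-\la_1^2|^2$ again yields $(1+p)^2-s^2$, while $(1-|\la_1|^2)(1-|\la_2|^2)=(1-p)^2$. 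In both regimes therefore
\[
\rho(s,p)=\half(1-p)+\half\sqrt{(1+p-s)(1+p+s)}\quad\text{on }\G\cap\R^2.
\]
The crux of the argument is this uniform simplification; checking that the complex-eigenvalue branch collapses to the same formula is the only step that requires any genuine computation.

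By Proposition \ref{pr-realgamma}, $\G\cap\R^2$ is the open triangle $\{(s,p)\in\R^2:-1<p<1,\ |s|<1+p\}$, on which the linear factors $1+p-s$ and $1+p+s$ are strictly positive. The function $\rho$ is the sum of an affine function and the geometric mean of two positive affine functions; since the geometric mean of two positive concave functions is concave, $\rho$ is concave on the (convex) triangle $\G\cap\R^2$.

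Convexity of $\PP\cap\R^3$ is then immediate. Given $(a_j,s_j,p_j)\in\PP\cap\R^3$ for $j=1,2$ and $t\in[0,1]$, let $(a,s,p)=t(a_1,s_1,p_1)+(1-t)(a_2,s_2,p_2)$. Then $(s,p)\in\G\cap\R^2$ by convexity of the triangle, and
\[
|a|\le t|a_1|+(1-t)|a_2|<t\rho(s_1,p_1)+(1-t)\rho(s_2,p_2)\le\rho(s,p),
\]
so $(a,s,p)\in\PP\cap\R^3$. The main (and essentially only) obstacle is the explicit identification of $\rho$; every remaining step is routine.
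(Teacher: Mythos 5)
Your proof is correct and follows essentially the same strategy as the paper: both reduce convexity of $\PP\cap\R^3$ to concavity of the fibre-radius function $\half(1-p)+\half\sqrt{(1+p)^2-s^2}$ over the convex triangle $\G\cap\R^2$, and then conclude by the same triangle-inequality argument. The only (minor) differences are that you obtain the explicit radius from criterion (4) of Theorem \ref{4equiv} via the real/complex-conjugate eigenvalue case split, whereas the paper uses criterion (3) with $\beta=s/(1+p)$, and you verify concavity by the standard geometric-mean-of-affine-functions fact instead of the paper's Hessian computation.
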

\begin{proof} 
Let $(a_1,s_1,p_1), (a_2,s_2,p_2) \in \PP\cap \R^3$. By Theorem \ref{4equiv}, $(s_1,p_1), (s_2,p_2) \in \G\cap \R^2$,
 $|a_1| < K(s_1,p_1)$ and $|a_2| < K(s_2,p_2)$, where for  $(s,p) \in \G$
\[
K(s,p) = \left|1- \frac{\half s\bar\beta}{1+\sqrt{1-|\beta|^2}}\right|
\]
and $\beta=\frac{s-\bar sp}{1-|p|^2}$.

By Proposition \ref{pr-realgamma}  $\G\cap \R^2$ is convex. To prove that $\PP\cap \R^3$ is convex we have  to show that 
for all $0 <t <1$,
\[
|t a_1 + (1-t) a_2| < K (t(s_1,p_1) + (1-t)(s_2,p_2)).
\]
Note that 
\[
|t a_1 + (1-t) a_2| \le t|a_1| + (1-t) |a_2|< t K (s_1,p_1) + (1-t) K(s_2,p_2).
\]
Thus it suffices to prove that 
for all $0 <t <1$,
\[
t K(s_1,p_1) + (1-t) K(s_2,p_2) \le K (t(s_1,p_1) + (1-t)(s_2,p_2)),
\]
that is, that $K:\G \cap \R^2 \to \R$ is concave.

For real $(s,p) \in \G$, $\beta=\frac{s}{1+p}$ and
$-1 <\beta <1$.  Thus
\[
K(s,p) = \left|1- \frac{\half s\beta}{1+\sqrt{1-\beta^2}}\right|
\]
\[
=1- \frac{\half s\beta (1-\sqrt{1-\beta^2})}{1-(1-\beta^2)}
=1- \half \frac{1}{\beta} s(1-\sqrt{1-\beta^2})
\]
\[
=1- \half (1+p) (1-\sqrt{1-\beta^2})=1- \half \left(1+p -\sqrt{(1+p)^2 -s^2}\right).
\]
It is straightforward to show that the Hessian of $K$ 
\begin{equation*}
\left[ \begin{array}{cc} 
\frac{\partial^2 K}{\partial s^2} & 
\frac{\partial^2 K}{\partial s \partial p} \\
\\
\frac{\partial^2 K}{\partial s \partial p}&
 \frac{\partial^2 K}{\partial p^2}
\end{array} \right]
= \frac{1}{2((1+p)^2 -s^2)^{3/2}}
\left[ \begin{array}{cc} 
-(1+p)^2 & s(1+p) \\
\\
s(1+p) & -s^2
\end{array} \right] \le 0.
\end{equation*}
Therefore $K$ is concave and $\PP\cap \R^3$ is convex.
\end{proof}
\begin{figure}
  \label{realpenta}
  \includegraphics[width=7.5cm]{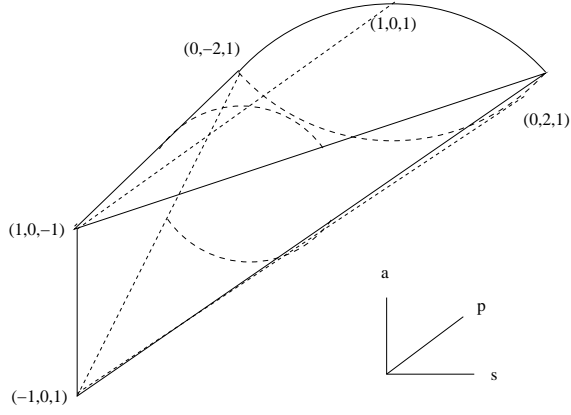}
 \caption{The real pentablock}
\end{figure}
The sketch of the real pentablock in Figure 2 is explained in the following statement.
\begin{theorem} \label{PcapR3}
$\PP\cap \R^3$ is a convex open domain with five faces and with the four vertices $(0,-2,1), (0,2,1)$, $ (1,0,-1)$ and $(-1,0,-1)$.
The faces are the following sets:
\begin{enumerate}
\item the triangle with vertices $ (0,2,1)$, $ (1,0,-1)$ and $(-1,0,-1)$  together with its interior;
\item the triangle with vertices $(0,-2,1)$, $ (1,0,-1)$ and $(-1,0,-1)$  together with its interior;
\item the ellipse
$$\{(a,s,1): a ^2+ s^2/4 = 1, -2 \le s \le 2 \}$$
with  centre at $(0,0,1)$, with major axis joining the points  $(0,2,1)$ and $(0,-2,1)$ and with minor axis joining the points $(1,0,1)$ and $ (-1,0,1)$,  together with its interior;
\item a surface with vertices  $ (1,0,-1)$ and $(0,-2,1)$, $(0,2,1)$
and boundaries \\
(i) $\{(a,s,1): a = \sqrt{1- s^2/4}, -2 \le s \le 2 \}$; \\
(ii) the straight line segment joining  $(0,-2,1)$ and $ (1,0,-1)$; \\
(iii) the straight line segment joining $ (0,2,1)$ and $ (1,0,-1)$;
\item a surface with vertices  $ (-1,0,-1)$ and $(0,-2,1)$, $(0,2,1)$
and boundaries \\
(i) $\{(a,s,1): a = -\sqrt{1- s^2/4}, -2 \le s \le 2 \}$; \\
(ii)  the straight line segment joining $(0,-2,1)$ and $ (-1,0,-1)$; \\
(iii) the straight line segment joining $ (0,2,1)$ and $ (-1,0,-1)$.
\end{enumerate} 
\end{theorem}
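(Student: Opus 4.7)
The plan is to use the explicit description
\[
\bar\PP \cap \R^3 = \{(a,s,p)\in\R^3 : (s,p)\in \Gamma \cap \R^2,\ |a| \leq K(s,p)\},
\]
where
\[
K(s,p) = 1 - \tfrac12\bigl(1+p - \sqrt{(1+p)^2 - s^2}\bigr),
\]
the expression for $K$ being the one derived in the proof of convexity of $\PP \cap \R^3$ above. By Proposition \ref{pr-realgamma}, $T := \Gamma \cap \R^2$ is the closed isosceles triangle with vertices $(\pm 2, 1)$ and $(0, -1)$, bounded by the three straight edges $p = 1$ (with $-2 \leq s \leq 2$) and $s = \pm(1 + p)$ (with $-1 \leq p \leq 1$). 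Since convexity of $\PP \cap \R^3$ has already been established, the task reduces to identifying the boundary faces by studying $K$ on the edges and on the interior of $T$.

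First I evaluate $K$ on each of the three edges of $T$. On $p = 1$ the formula collapses to $K(s,1) = \sqrt{1 - s^2/4}$, so the fibre $\{(a,s,1) : |a| \leq K(s,1)\}$ is precisely the closed elliptic disc $a^2 + s^2/4 \leq 1$ in the plane $p = 1$, giving face (3). On the edge $s = 1 + p$ the radicand $(1+p)^2 - s^2$ vanishes, so $K = \half(1-p)$, and the inequality $|a| \leq \half(1-p)$ in the plane $s = 1+p$ carves out the flat triangle with vertices $(0,2,1)$ (at $p=1$, $a=0$) and $(\pm 1, 0, -1)$ (at $p = -1$, $a = \pm 1$), which is face (1). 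Since $K$ depends on $s$ only through $s^2$, the edge $s = -(1+p)$ is handled by symmetry and yields the analogous flat triangle with vertices $(0,-2,1)$ and $(\pm 1, 0, -1)$, namely face (2).

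For the curved faces I use that $K(s,p) > 0$ strictly on the interior of $T$, so the portion of $\partial(\bar\PP \cap \R^3)$ lying above the interior of $T$ is the disjoint union of the two graphs $a = +K(s,p)$ and $a = -K(s,p)$; these are faces (4) and (5). Their boundary arcs are obtained by letting $(s,p)$ approach the three sides of $T$: approaching $p = 1$ gives the upper (respectively lower) half of the ellipse $a = \pm\sqrt{1-s^2/4}$; approaching $s = 1 + p$ gives the straight segment $a = \pm\half(1-p)$ joining $(0,2,1)$ to $(\pm 1, 0, -1)$ in the plane $s = 1+p$; and approaching $s = -(1+p)$ gives the analogous segments joining $(0,-2,1)$ to $(\pm 1, 0, -1)$. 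Choosing the sign $+$ for face (4) and $-$ for face (5) reproduces exactly the boundary arcs listed in (4)(i)--(iii) and (5)(i)--(iii). The four vertices $(0,\pm 2, 1)$ and $(\pm 1, 0, -1)$ are then identified as the points at which three or more of the five faces meet.

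The main obstacle is not any isolated calculation but the combinatorial bookkeeping: verifying that the boundary arcs of the two curved faces glue continuously onto the corresponding edges of the three flat faces, and that these five faces together exhaust the topological boundary of $\bar\PP \cap \R^3$. A subsidiary technical point is handling the limit $p \to 1$, where $\beta = s/(1+p)$ becomes singular but $K$ extends analytically; once this is observed and the closed-form expression for $K$ is in hand, the remaining verifications are direct.
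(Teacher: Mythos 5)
Your proposal is correct and follows essentially the same route as the paper: it rests on the description of $\PP$ from Corollary \ref{altC}, specialised to real points where $K(s,p)=1-\tfrac12\bigl(1+p-\sqrt{(1+p)^2-s^2}\bigr)$, and then identifies the five faces by evaluating $K$ on the three edges $p=1$, $s=\pm(1+p)$ of the real symmetrised bidisc triangle and taking the graphs $a=\pm K$ over its interior, exactly as in the paper's case analysis. (A small side remark: the reduced formula $\beta=s/(1+p)$ is regular at $p=1$, where $\beta=s/2$; it is $p=-1$ where it degenerates, though there $s=0$ and $K=1$, so nothing in your argument is affected.)
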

\begin{proof} By Corollary \ref{altC},
the domain $\PP$ is expressible by the equation
\beq\label{descrP-1}
\PP= \left\{(c,s,p): (s,p)\in \G, |c| < \left| 1-\frac{\half s\bar\beta}{1+\sqrt{1-|\beta|^2}}\right| \right\}
\eeq
where $\beta=(s-\bar sp)/(1-|p|^2)$. 
By \eqref{GammaReal}, $(s,p) \in \Gamma \cap \R^2$ if and only if  $s \in \R$ and $|s(1-p)| \le 1-p^2$, that is,
$s \in \R$, $ -1 \le p \le 1$ and $|s| \le 1+ p$. For $(s, p) \in \R^2$, $\beta=s(1-p)/(1-p^2)= s/(1+p)$.

Therefore, 
\begin{align}\label{descrP-2}
\PP \cap \R^3 &= \left\{
(a,s,p): (s,p)\in\G \cap \R^2,  a \in \R\; \text{and} \; |a| < \left| 1-\frac{\half s^2/(1+p)}{1+\sqrt{1-(s/(1+p))^2}} \right| 
\right\}. \nn
\end{align}

Let us consider the boundary of $\PP\cap \R^3$.
\begin{enumerate}
\item Let $\beta =1$,  and so $s=1+p$, $|a| \le  |1 - \half s|$.
Thus we have
a triangle with vertices: $ (0,2,1)$, $ (1,0,-1)$ and $(-1,0,-1)$;
\item Let $\beta =-1$,  and so $-s=1+p$, $|a| \le  |1 + \half s|$.
Thus we have a triangle which has vertices: $(0,-2,1)$, $ (1,0,-1)$ and $(-1,0,-1)$;
\item Let $p=-1$, then $s =0$. Thus we have a straight line between 
two points $(-1,0,-1)$ and $(1,0,-1)$.

Let $p=1$ and so $\beta =\half s$. Then
\[
|a| \le \left| 1-
\frac{\half s \beta}{1+\sqrt{1-\beta^2}} \right| = \sqrt{1-(\half s)^2}.
\]
Therefore we have the ellipse 
$$\{(a,s,1): a ^2+ s^2/4 \le 1, -2 \le s \le 2 \}$$
with centre at $(0,-0,1)$ which goes through the points $(1,0,1)$, $(0,2,1)$, $ (-1,0,1)$ and $(0,-2,1)$;
\item the surface $S_1$ is
\[
 \left\{(a,s,p): (s,p)\in\G \cap \R^2,  a \in \R, 0\le a \le 1\; \text{and} \; a = \left| 1-\frac{\half s^2/(1+p)}{1+\sqrt{1-(s/(1+p))^2}} \right| 
\right\}
\]
which has vertices  $ (1,0,-1)$ and $(0,-2,1)$, $(0,2,1)$
and boundaries\\
(i) $\{(a,s,1): a = \sqrt{1- s^2/4}, -2 \le s \le 2 \}$; \\
(ii) the straight segment joining  $(0,-2,1)$ and $ (1,0,-1)$; \\
(iii)  the straight segment joining $ (0,2,1)$ and $ (1,0,-1)$;
\item the surface $S_2$ is
\[
 \left\{(a,s,p): (s,p)\in\G \cap \R^2,  a \in \R, -1 \le a \le 0\; \text{and} \; a = - \left| 1-\frac{\half s^2/(1+p)}{1+\sqrt{1-(s/(1+p))^2}} \right| 
\right\}
\]
which has  vertices  $ (-1,0,-1)$, \, $(0,-2,1)$ and $(0,2,1)$
and boundaries: 
(i) $\{(a,s,1): a = -\sqrt{1- s^2/4}, -2 \le s \le 2 \}$; \\
(ii)  the straight line segment joining  $(0,-2,1)$ and $ (-1,0,-1)$; \\
(iii) the straight line segment joining  $ (0,2,1)$ and $ (-1,0,-1)$.
\end{enumerate} 
\end{proof}

\section{A Schwarz Lemma for a general $\mu$ }\label{schwarz_mu}
The classical Schwarz Lemma gives a solvability criterion for a two-point interpolation problem in $\D$.  There is a simple  analogue for two-point $\mu$-synthesis; it is general in terms the cost functions $\mu_E$ to which it applies, but very special in terms of the interpolation conditions.   In this section we consider a general linear subspace $E$ of $\C^{n\times m}$ and the corresponding $\mu_E$ on $\C^{m\times n}$, as in equation \eqref{defmu}.

\begin{definition}\label{defB_mu}
$\Omega_{\mu_E}$ is the domain in $\C^{m\times n}$ given by 
\beq\label{Bmu_n}
\Omega_{\mu_E}= \{A\in \C^{m\times n}: \mu_E(A) < 1\}.
\eeq
\end{definition}

We shall denote by $N$ the Nevanlinna class of functions on the disc \cite{rudin} and if $F$ is a matricial function on $\D$ then we write $F\in N$ to mean that each entry of $F$ belongs to $N$.  It then follows from Fatou's Theorem that if $F\in N$ is an $m\times n$-matrix-valued function then
\[
\lim_{r\to 1-} F(r\la) \mbox{ exists for  almost all }  \la \in\T.
\]

\begin{lemma}\label{mu_E}
Let $F,G \in \hol(\D,\C^{m\times n})$ satisfy $F(\la) = \la G(\la)$ for all $\la \in \D$. Let $F\in N$ and let $E$ be a subset of $\C^{n\times m}$. Suppose that  $ \mu_E(F(\la)) < 1 $ for all $\la \in \D$.
Then $ \mu_E(G(\la)) \le 1 $ for all $\la \in \D$.
\end{lemma}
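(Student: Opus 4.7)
My plan is to establish the lemma via a Schwarz-type maximum-principle argument for the balanced domain $\Omega_{\mu_E}$. It rests on two inputs: the absolute homogeneity of $\mu_E$ (which follows directly from definition \eqref{defmu} by rescaling $X \mapsto cX$), and Vesentini's classical theorem to the effect that for any holomorphic matrix-valued function $M(\la)$ on $\D$, the map $\la \mapsto \log\rho(M(\la))$ is subharmonic.

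The first input, combined with the identity $F(\la) = \la G(\la)$, immediately yields the homogeneity bound
\[
\mu_E(G(\la)) \;=\; \frac{\mu_E(F(\la))}{|\la|} \;<\; \frac{1}{|\la|}, \qquad \la \in \D\setminus\{0\}.
\]
Now fix any $X \in E$ with $\|X\|\le 1$. Because $\rho(AY)\le\mu_E(A)$ whenever $\|Y\|\le 1$, the function $u_X(\la) := \log\rho(G(\la)X)$, which is subharmonic on $\D$ by Vesentini's theorem applied to the holomorphic $\C^{m\times m}$-valued function $\la\mapsto G(\la)X$, satisfies $u_X(\la) < -\log|\la|$ on $\D\setminus\{0\}$, and hence $\limsup_{|\la|\to 1^{-}} u_X(\la) \le 0$. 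The maximum principle for subharmonic functions on $\D$ then delivers $u_X(\la)\le 0$ throughout $\D$, that is, $\rho(G(\la)X)\le 1$. Taking the supremum over all $X\in E$ with $\|X\|\le 1$ and using the equivalent formulation $\mu_E(A) = \sup\{\rho(AY) : Y\in E,\ \|Y\|\le 1\}$ of definition \eqref{defmu} produces the required bound $\mu_E(G(\la))\le 1$.

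The main technical obstacle is the invocation of Vesentini's subharmonicity theorem for the spectral radius of a holomorphic Banach-algebra-valued function; once that is in hand, the remaining maximum-principle bookkeeping is routine. The Nevanlinna-class hypothesis $F\in N$ does not play a direct role in the approach sketched above, although it does guarantee that $G=F/\la$ again lies in $N$, which is presumably relevant for the subsequent applications of the lemma.
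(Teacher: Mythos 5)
Your proof is correct, but it takes a genuinely different route from the paper's. The paper's proof leans on the hypothesis $F\in N$: by Fatou's theorem the radial limits $F_*(\la)=\la G_*(\la)$ exist a.e.\ on $\T$, these boundary values satisfy $\mu_E(G_*(\la))\le 1$ a.e., and the interior bound then follows from the maximum principle for $\mu_E$ quoted from \cite[Theorem 8.21]{DuPa}. You instead fix $X\in E$ with $\|X\|\le 1$, combine the homogeneity $\mu_E(G(\la))=\mu_E(F(\la))/|\la|<1/|\la|$ with the bound $\rho(G(\la)X)\le\mu_E(G(\la))$, and apply Vesentini's theorem (subharmonicity of $\la\mapsto\log\rho(G(\la)X)$) together with the boundary-limsup form of the maximum principle to get $\rho(G(\la)X)\le 1$ on $\D$; the variational formula $\mu_E(A)=\sup\{\rho(AX):X\in E,\ \|X\|\le 1\}$ then yields the claim. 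What your route buys: it eliminates the Nevanlinna-class hypothesis altogether (as you observe, $F\in N$ plays no role), and it replaces the cited maximum principle for $\mu_E$ by the classical Vesentini theorem, so the argument is essentially self-contained. What it costs: you must quote, or prove in a couple of lines via rescaling an eigenvalue equation, the variational formula for $\mu_E$; and both that formula and the homogeneity you invoke require $E$ to be closed under multiplication by nonzero scalars --- true for the linear subspace assumed throughout Section 9, though not for the literal ``subset'' in the lemma's wording (the paper's own proof carries the same implicit requirement, so this is a shared, harmless caveat rather than a gap in your argument).
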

\begin{proof}
Write
\[
F_*(\la) = \lim_{r \to 1-} F(r \la) 
\]
for $\la \in \T$ where the limit exists.
Clearly
\begin{align}\label{mu_E_n}
\mu_E(F_*(\la)) & \le 1  \;\; \text{ exists for almost all}\;\; \la \in \T, \\
\mu_E(\la G_*(\la)) & \le 1  \;\; \text{ exists for almost all}\;\; \la \in \T, \nn\\
\mu_E(G_*(\la)) & \le |\la| \mu_E(\la G_*(\la)) \le 1 \;\; \text{  for almost all}\;\; \la \in \T. \nn
\end{align}
By the maximum principle for $\mu_E$ \cite[Theorem 8.21]{DuPa},
$ \mu_E(G(\la)) \le 1 $ for all $\la \in \D$.
\end{proof}
\begin{proposition}\label{schwarz_L_B_mu}
Let $\la_0 \in \D \setminus \{0 \}$, let $W \in \C^{m\times n}$ and let $E$ be a subset of $\C^{n\times m}$. There exists $F \in  N\cap\hol(\D,\C^{m\times n})$
such that 
\begin{enumerate}
\item $F(0) =0$ and $F(\la_0)= W$,
\item $\mu_E(F(\la)) < 1 $ for all $\la \in \D$
\end{enumerate}
if and only if $\mu_E(W) \le |\la_0|$. 
\end{proposition}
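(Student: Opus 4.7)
The plan is to prove the two implications separately. Sufficiency will be immediate from the homogeneity of $\mu_E$ in the matrix variable, while necessity will be an application of Lemma \ref{mu_E}. The key auxiliary fact I will use freely is that $\mu_E(cA)=|c|\,\mu_E(A)$ for all $c\in\C$ and $A\in\C^{m\times n}$. This follows directly from definition \eqref{defmu}: if $c\ne 0$, the substitution $Y=cX$ gives
\[
\inf\{\|X\|:X\in E,\ \det(1-cAX)=0\}=\frac{1}{|c|}\inf\{\|Y\|:Y\in E,\ \det(1-AY)=0\},
\]
since $E$ is a linear subspace.

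For sufficiency, I would assume $\mu_E(W)\le|\lambda_0|$ and take the explicit linear candidate
\[
F(\lambda)=\frac{\lambda}{\lambda_0}\,W,\qquad \lambda\in\D.
\]
This $F$ is polynomial (so certainly in $N$), satisfies $F(0)=0$ and $F(\lambda_0)=W$, and by homogeneity obeys $\mu_E(F(\lambda))=(|\lambda|/|\lambda_0|)\mu_E(W)\le|\lambda|<1$ for every $\lambda\in\D$.

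For necessity, I would assume $F$ is as in the statement and define
\[
G(\lambda):=\frac{F(\lambda)}{\lambda}\qquad(\lambda\in\D\setminus\{0\}),\qquad G(0):=F'(0).
\]
Since $F(0)=0$ and $F$ is entrywise holomorphic, the removable singularity theorem (applied entrywise) yields $G\in\hol(\D,\C^{m\times n})$, and by construction $F(\lambda)=\lambda G(\lambda)$ on $\D$. The hypotheses of Lemma \ref{mu_E} — namely $F\in N$, $F=\lambda G$, and $\mu_E(F(\lambda))<1$ on $\D$ — are then all met, so the lemma gives $\mu_E(G(\lambda))\le 1$ throughout $\D$. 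Evaluating at $\lambda_0$ and invoking homogeneity one more time,
\[
\mu_E(W)=|\lambda_0|\,\mu_E\!\left(\tfrac{W}{\lambda_0}\right)=|\lambda_0|\,\mu_E(G(\lambda_0))\le|\lambda_0|.
\]

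There is no substantive obstacle in this proof: the real work — a Fatou/maximum-principle argument for $\mu_E$ — is already packaged in Lemma \ref{mu_E}, and everything else reduces to homogeneity and an application of the removable singularity theorem. The only mild subtlety worth flagging in the write-up is that the strict inequality $\mu_E(F(\lambda))<1$ in the sufficiency construction comes from $|\lambda|<1$ (not from strict inequality in $\mu_E(W)\le|\lambda_0|$), so the boundary case $\mu_E(W)=|\lambda_0|$ is handled correctly.
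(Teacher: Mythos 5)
Your proof is correct and follows essentially the same route as the paper: the sufficiency direction uses the same linear function $F(\la)=\tfrac{\la}{\la_0}W$ together with homogeneity of $\mu_E$, and the necessity direction factors $F(\la)=\la G(\la)$ and applies Lemma \ref{mu_E} at $\la_0$ exactly as the paper does. Your explicit verification of the homogeneity $\mu_E(cA)=|c|\,\mu_E(A)$ (used implicitly in the paper) is a harmless addition.
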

\begin{proof} ($\Leftarrow$) Suppose $\mu_E(W) \le |\la_0|$. Let 
$F(\la) = \frac{\la}{\la_0} W$. Then $F\in N, \, F(0) =0, \, F(\la_0)= W$ and, 
for all $\la \in \D$,
\[
\mu_E(F(\la)) = \mu_E\left(\frac{\la}{\la_0} W\right) =\frac{|\la|}{|\la_0|}\mu_E(W) \le |\la|< 1.
\]
($\Rightarrow$) Suppose  there exists $F\in N$ such that (1) and (2) hold. Since  $F(0) =0$ there exists  $G \in \hol(\D,\C^{m\times n})$ such that $F(\la) = \la G(\la)$ for all $\la \in \D$ and 
\[ 
G(\la_0) = \frac{1}{\la_0} F(\la_0) = \frac{1}{\la_0} W.
\]
By Lemma \ref{mu_E}, $\mu_E(G(\la_0)) \le 1$. Hence $\mu_E(W) \le |\la_0|$.
\end{proof}

In the next section we shall seek a Schwarz Lemma for $\ccc$.  One might try to deduce such a result from Proposition \ref {schwarz_L_B_mu}  by lifting maps from $\hol(\D,\ccc)$ to $\hol(\D,\Omega_{\mu_E})$.  However,  
 Section \ref{lifting} shows that the lifting problem is delicate, and a Schwarz Lemma for $\PP$ cannot easily be derived in this way.

\section{What is the Schwarz Lemma for $\PP$?}\label{schwarz}

For which pairs $\la_0\in\D$ and $(a,s,p) \in\PP$ does there exist $h\in\hol(\D,\PP)$ such that $h(0)=(0,0,0)$ and $h(\la_0)=(a,s,p)$?   We can easily find a necessary condition.
\begin{proposition}\label{necSchwarz}
If $h\in\hol(\D,\PP)$ satisfies $h(0)=(0,0,0)$ and $h(\la_0)=(a,s,p)$ then
\beq\label{schwarzGamma}
\frac{2|s -\bar sp|+|s^2-4p|}{4-|s|^2} \leq |\la_0|
\eeq
and
\beq\label{2ndnec}
|a|\left/ \left|1-\frac{\half s \bar{\beta}}{1+\sqrt{1-|\beta|^2}}\right|\right. \leq |\la_0|
\eeq
where $\beta= (s-\bar s p)/(1-|p|^2)$.
\end{proposition}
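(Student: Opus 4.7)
The plan is to obtain each inequality as an instance of the classical Schwarz lemma applied to $h$ composed with a suitable family of holomorphic scalar functions that map $\ccc$ (respectively $\G$) into $\Delta$ and vanish at the origin.

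For \eqref{2ndnec}, the relevant family is $\{\Psi_z:z\in\D\}$ from Definition \ref{defPsi}. By the equivalence of (1) and (5) in Theorem \ref{4equiv}, $|\Psi_z(a',s',p')|<1$ for every $(a',s',p')\in\ccc$ and every $z\in\D$, and $\Psi_z(0,0,0)=0$ is immediate from the formula. Thus, for each fixed $z\in\D$, the composition $\Psi_z\circ h$ is a holomorphic self-map of $\D$ vanishing at $0$, so the classical Schwarz lemma gives
\[
|\Psi_z(a,s,p)|=|\Psi_z(h(\la_0))|\leq|\la_0|.
\]
Taking the supremum over $z\in\D$ and invoking Proposition \ref{valsup}, which identifies $\sup_{z\in\D}\tfrac{1-|z|^2}{|1-sz+pz^2|}$ with $\left|1-\frac{\half s\bar\beta}{1+\sqrt{1-|\beta|^2}}\right|\inv$, produces \eqref{2ndnec}.

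For \eqref{schwarzGamma} the argument is parallel but lives on $\G$. The coordinate map $(h_2,h_3)\colon\D\to\G$ sends $0$ to $(0,0)$, and the left-hand side of \eqref{schwarzGamma} is precisely the Carath\'eodory distance from $(0,0)$ to $(s,p)$ in $\G$. This distance is realised as $\sup_{\omega\in\T}|\Phi_\omega(s,p)|$, where $\Phi_\omega(s,p)=(2\omega p-s)/(2-\omega s)$ are the Agler--Young Schwarz extremals for the symmetrised bidisc: each $\Phi_\omega$ is holomorphic in a neighbourhood of $\Gamma$, vanishes at $(0,0)$, and satisfies $|\Phi_\omega|<1$ on $\G$ (see \cite{AY04}). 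Applying the classical Schwarz lemma to $\Phi_\omega\circ(h_2,h_3)$ for each $\omega\in\T$ and taking the supremum over $\omega$ then yields \eqref{schwarzGamma}.

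The main subtlety lies not in the Schwarz argument itself but in the closed-form evaluation of the two suprema. For the $\ccc$-inequality this evaluation is precisely Proposition \ref{valsup}, already established; for the $\G$-inequality it is the classical Agler--Young computation giving $\sup_{\omega\in\T}|\Phi_\omega(s,p)|=\frac{2|s-\bar sp|+|s^2-4p|}{4-|s|^2}$. Once both closed forms are in hand, the proposition reduces to a routine composition with the classical Schwarz lemma in each coordinate family.
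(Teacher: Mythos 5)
Your proof is correct, and for the inequality \eqref{2ndnec} it is exactly the paper's argument: compose $h$ with the maps $\Psi_z$ (which send $\PP$ into $\D$ and $(0,0,0)$ to $0$ by Theorem \ref{4equiv}), apply the classical Schwarz lemma, take the supremum over $z$, and evaluate it by Proposition \ref{valsup}. For \eqref{schwarzGamma} you diverge slightly in presentation: the paper simply invokes the Schwarz Lemma for $\G$ (\cite[Theorem 1.1]{AY01}) applied to $(h_2,h_3)$, whereas you unpack that result via the Carath\'eodory extremals $\Phi_\omega(s,p)=(2\omega p-s)/(2-\omega s)$, apply the scalar Schwarz lemma to each $\Phi_\omega\circ(h_2,h_3)$, and then use the known Agler--Young identity $\sup_{\omega\in\T}|\Phi_\omega(s,p)|=\bigl(2|s-\bar sp|+|s^2-4p|\bigr)/(4-|s|^2)$. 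This is a legitimate alternative (indeed it is essentially how the necessity half of the cited Schwarz lemma is proved), and it makes the two inequalities structurally parallel; the cost is that you still lean on the nontrivial closed-form evaluation of the supremum from the literature, so nothing is really more elementary than the paper's one-line citation. One small inaccuracy: each $\Phi_\omega$ is not holomorphic on a neighbourhood of all of $\Gamma$ (it has a pole at $s=2\bar\omega$, which meets $b\Gamma$); holomorphy on $\G$, where $|s|<2$, is all you need and all that is true, and the argument goes through unchanged.
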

\begin{proof}
If $h=(h_1,h_2,h_3)$ then $(h_2,h_3) \in\hol(\D,\G)$ maps $0$ to $(0,0)$ and $\la_0$ to $(s,p)$.
By the Schwarz Lemma for $\G$ \cite[Theorem 1.1]{AY01} the inequality \eqref{schwarzGamma} holds. 

By Theorem \ref{4equiv}, for every $z\in\D$, the function
\[
\Psi_z(a,s,p)=\frac{a(1-|z|^2)}{1-sz+pz^2}
\]
maps $\PP$ analytically to $\D$. It also maps $(0,0,0)$ to  $0$.  Hence  $\Psi_z\circ h$ is an analytic self-map of $\D$ that maps $0$ to $0$ and $\la_0$ to $\Psi_z(a,s,p)$.  By Schwarz' Lemma we have 
\[
|\Psi_z(a,s,p)| \leq |\la_0| \quad \mbox{ for all } z\in\D.
\]
On taking the supremum of the left hand side over $z\in\D$ and invoking Proposition \ref{valsup} we obtain the inequality \eqref{2ndnec}.
\end{proof}
On dividing through by $\la_0$ in the inequalities \eqref{schwarzGamma} and \eqref{2ndnec} and letting $\la_0 \to 0$ we obtain an infinitesimal necessary condition.
\begin{corollary}
If $h=(h_1,h_2,h_3)\in\hol(\D,\ccc)$ and $h(0)=(0,0,0)$ then
\[
 |h_1'(0)| \leq 1 \quad \mbox{ and }  \quad \half |h_2'(0)| +  |h_3'(0)| \leq 1.
\]
\end{corollary}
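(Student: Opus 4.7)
The corollary is exactly what the sentence before it indicates: it is the infinitesimal form of Proposition \ref{necSchwarz}, obtained by applying that proposition to the point $h(\la_0)$ and letting $\la_0 \to 0$. So my plan is simply to carry out that limit carefully.

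First I would fix any $\la_0 \in \D\setminus\{0\}$ and set $(a,s,p) := h(\la_0) \in \PP$. Since $h(0)=(0,0,0)$ and $h$ is holomorphic, we have the expansions $a = h_1'(0)\la_0 + O(\la_0^2)$, $s = h_2'(0)\la_0 + O(\la_0^2)$ and $p = h_3'(0)\la_0 + O(\la_0^2)$ as $\la_0 \to 0$. Proposition \ref{necSchwarz} supplies the two inequalities \eqref{schwarzGamma} and \eqref{2ndnec} for this $(a,s,p)$; I would divide each by $|\la_0|$ and study the limit.

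For \eqref{2ndnec}, note that $\beta = (s-\bar s p)/(1-|p|^2) \to 0$ as $\la_0\to 0$, so the right-hand side denominator
\[
\left|1 - \frac{\half s \bar\beta}{1+\sqrt{1-|\beta|^2}}\right| \longrightarrow 1.
\]
Dividing \eqref{2ndnec} by $|\la_0|$ and using $a/\la_0 \to h_1'(0)$ gives at once $|h_1'(0)| \leq 1$.

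For \eqref{schwarzGamma} the main point is to compute the leading order of the left-hand side. Since $s$ and $p$ are both $O(\la_0)$, we have $\bar s p = O(\la_0^2)$ and $s^2 = O(\la_0^2)$, and $|s|^2 = O(\la_0^2)$, so
\[
2|s-\bar s p| + |s^2-4p| = 2|s| + 4|p| + O(|\la_0|^2), \qquad 4-|s|^2 = 4+O(|\la_0|^2).
\]
Dividing \eqref{schwarzGamma} by $|\la_0|$ and letting $\la_0\to 0$ then yields
\[
\tfrac{1}{2}|h_2'(0)| + |h_3'(0)| \;=\; \lim_{\la_0\to 0} \frac{2|s| + 4|p|}{4|\la_0|} \;\leq\; 1,
\]
which is the remaining inequality. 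There is no serious obstacle here; the only thing to be careful about is the routine verification that the cross terms $\bar s p$ and $s^2$ really are lower order, which follows from $s,p = O(\la_0)$ so $\bar s p, s^2 = O(\la_0^2)$, while $|s|$ and $|p|$ are themselves $O(\la_0)$.
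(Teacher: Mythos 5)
Your proposal is correct and is exactly the paper's argument: the paper derives this corollary by dividing the two inequalities of Proposition \ref{necSchwarz} by $\la_0$ and letting $\la_0\to 0$, which is what you carry out (with the routine asymptotics $\bar sp, s^2, |s|^2 = O(|\la_0|^2)$ and $\beta\to 0$ made explicit).
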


Is there a converse?  Is it the case that if
\beq\label{twoconds}
|A| \leq 1 \quad \mbox{ and }\quad \half|S|+|P| \leq 1
\eeq
then there exists $h\in\hol(\D,\bar\ccc) $ such that $h(0)=(0,0,0)$ and $h'(0)=(A,S,P)$?
The answer is no.  
\begin{example} \rm
Choose $A=1, \, 0<P<1$ and $S=2(1-P)$.    The inequalities \eqref{twoconds} hold.
Suppose there exists $h=(a,s,p) \in\hol(\D,\bar\ccc)$ with the required properties.  Since $a\in\schur, \, a(0)=0$ and $a'(0)=1$, Schwarz' Lemma asserts that $a(\la)= \la$ for $\la\in\D$.   Since $\half|S|+|P| = 1$ we know from \cite{geos} that there is a {\em unique}
function $(s,p) \in \hol(\D,\G)$ that maps $0$ to $(0,0)$ and has derivative $(S,P)$ at $0$, to wit
\[
(s,p)(\la) =\frac{\la}{1+P\la} (2(1-P), \la+P).
\]
However, the function $h(\la)=(\la,s(\la), p(\la))$ does not map $\D$ to $\bar\ccc$.  For $h(1)=(1,2\xi,1)$ where
$\xi= (1-P)/(1+P) \in (0,1)$.  For the point $(2\xi,1)$ we have $\beta=\xi$, and so
\[
\left| 1- \frac{\half s \bar\beta}{1+\sqrt{1-|\beta|^2}}\right|= 1-\frac{\xi^2}{1+\sqrt{1-\xi^2}} = \sqrt{1-\xi^2} <1.
\]
Hence $h(1)=(1,2\xi,1) \notin \bar\ccc$, which is a contradiction.
\end{example}

\section{Analytic lifting}\label{lifting}

In the present context the $\mu$-synthesis problem is an interpolation problem for analytic functions from $\D$ to $\B_\mu$.
If $H:\D\to\B_\mu$ is an analytic function satisfying interpolation conditions $H(\la_j) = W_j$ for given points $\la_1,\dots,\la_n\in\D$ and target points $W_1,\dots,W_n \in\B_\mu$, then $h\df\pi\circ H:\D\to \ccc$ is an analytic function that satisfies
\beq\label{piderived}
h(\la_j)=\pi(W_j) \quad \mbox{ for } j=1,\dots,n.
\eeq
The idea is that interpolation problems for $\hol(\D,\ccc)$ should be easier than those for $\hol(\D,\B_\mu)$, as the bounded $3$-dimensional domain $\ccc$ is likely to have a more tractable geometry than the unbounded $4$-dimensional domain $\B_\mu$.

If we can find $h\in\hol(\D,\ccc)$ satisfying the interpolation conditions \eqref{piderived}, does it follow that we can lift $h$ to a function $H\in\hol(\D,\B_\mu)$ that solves the original interpolation problem?  (For the analogous questions in the cases of the symmetrised bidisc and the tetrablock, the answer is roughly yes, though with a few technicalities).
We shall say that $H\in\hol(\D,\mat)$ is an {\em analytic lifting} of $h\in\hol(\D,\bar\ccc)$   if  $\pi\circ H=h$.  We say that $H$ is a {\em Schur lifting} of $h$ if $\pi\circ H=h$ and $H$ belongs to the matricial Schur class 
\[
\mathcal{S}_{2\times 2} \df \{F\in\hol(\D,\mat):  \|F(\la)\| \leq 1 \, \mbox{ for all } \la\in\D\}.
\]
 Of course, if $H$ is an analytic lifting of $h$ then $H\in\hol(\D,\bar\B_\mu)$ (see
 Corollary \ref{dependsonly}).

The lifting problem for $\hol(\D,\PP)$ is delicate, as the following three examples show.
\begin{example} \label{ex1}\rm
Let $h(\la)= (\la,0,\la)$.  This $h\in\hol(\D,\PP)$  lifts to $H\in\mathcal{S}_{2\times 2}$ given by
\[
H(\la)=\bbm 0&-1\\ \la&0 \ebm.
\]
\end{example}
Here $H(\la)$ does not belong to the open matrix ball $\B$ for any $\la\in\D$.  Our construction in Proposition \ref{asuffCondn} above gives the following non-analytic lifting of $(\la,0,\la)\in\PP$ to $\B$:
\[
H(\la)= \bbm \ii(1-|\la|)^\half\zeta & -|\la| \\ \la & -\ii(1-|\la|)^\half \zeta \ebm
\]
where $\zeta$ is a square root of $\la$.

\begin{example}\label{ex2} \rm
Let $h(\la)=(\la^2,0,\la)$.  Then $h\in\hol(\D,\PP)$, but there is no $H\in\hol(\D,\mat)$ such that $h=\pi\circ H$.  
\end{example}
For suppose $H$ has this property.  We can write 
\[
H=\bbm -\eta & g \\ \la^2 & \eta \ebm
\]
for some $g, \eta$ in $\hol \D$.  Since $\det H= \la$ we must have
\[
\eta(\la)^2=-\la-\la^2g(\la)
\]
for $\la\in\D$.  This is a contradiction, since the right hand side has a {\em simple} zero at $0$, while the left hand side has a zero of multiplicity at least $2$.

These examples point to Proposition \ref{liftprop}. To prove this proposition we will need the following lemma.

\begin{lemma}\label{even-zeros} 
Let $f_1, f_2 \in \schur$ be such that there is no $\alpha \in \D$ for which, for some odd positive integer $n$, $\alpha$ is a zero of $f_1$ of multiplicity $n$ and a zero of $f_2$ of multiplicity greater than $n$. Then there exists $g\in\hol\D$ such that $f_1 + f_2 g$ has no zeros of odd multiplicity in $\D$. 
\end{lemma}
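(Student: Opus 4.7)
The plan is to reformulate the conclusion as finding $g,h\in\hol(\D)$ with $f_1 + f_2 g = h^2$. Because $\D$ is simply connected, a function in $\hol(\D)$ has only even-multiplicity zeros if and only if it is a square in $\hol(\D)$ (or is identically zero), so this is equivalent to the desired statement. Writing $g = (h^2 - f_1)/f_2$, the task becomes: find $h\in\hol(\D)$ such that $f_2 \mid h^2 - f_1$ in $\hol(\D)$. The cases $f_1\equiv 0$ and $f_2\equiv 0$ will be handled separately; in particular, if $f_2\equiv 0$ then the hypothesis forces $f_1$ itself to have no odd-multiplicity zeros, so $g=0$ works.

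Assume $f_1,f_2\not\equiv 0$. The divisibility $f_2 \mid h^2 - f_1$ amounts to a local condition: at each zero $\alpha\in\D$ of $f_2$, with $k := \operatorname{ord}_\alpha f_2$, the Taylor jet of $h$ at $\alpha$ to order $k-1$ must satisfy $h^2\equiv f_1\pmod{(z-\alpha)^k}$. I would check local solvability by splitting on $m := \operatorname{ord}_\alpha f_1$. When $m=0$, I pick $a_0$ to be a square root of $f_1(\alpha)\neq 0$ and recursively solve for $a_1,\dots,a_{k-1}$ from the Taylor coefficients of $h^2 = f_1$, each equation being linear in the next unknown with nonzero leading factor $2a_0$. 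When $m\ge k$, $f_1\equiv 0 \pmod{(z-\alpha)^k}$ already, so one may take the prescribed jet to be zero. The remaining case $1\le m<k$ is where the hypothesis is used: it forces $m$ to be even (otherwise $m$ is an odd multiplicity $n$ of $f_1$ with $k>n$, violating the hypothesis), so writing $m=2\ell$ one sets $a_0=\cdots=a_{\ell-1}=0$, chooses $a_\ell$ to be a square root of the leading coefficient of $f_1$ at $\alpha$, and solves recursively for $a_{\ell+1},\dots,a_{k-1}$ as in the first case.

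Finally, I would globalize via the Weierstrass--Mittag-Leffler interpolation theorem on $\D$: since the zero set of $f_2$ is discrete in $\D$ and $H^1(\D,\mathcal{O})=0$, there is an $h\in\hol(\D)$ whose Taylor expansion at each zero $\alpha$ of $f_2$ agrees modulo $(z-\alpha)^k$ with the locally prescribed polynomial jet constructed above. Then $h^2-f_1$ vanishes at each such $\alpha$ to order at least $k$, so $g := (h^2-f_1)/f_2 \in \hol(\D)$, and $f_1+f_2 g = h^2$ has only even-multiplicity zeros, as required. The hard part is the case $1\le m<k$ of the local analysis: this is the sole place where the hypothesis is indispensable, because a square $h^2$ necessarily has zeros of even order and therefore cannot match $f_1$ at $\alpha$ to an odd order less than $k$ — the hypothesis ``$k$ not greater than $n$ for odd $n$'' is designed precisely to rule out this obstruction.
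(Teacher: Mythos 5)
Your proof is correct and takes essentially the same route as the paper's: both arguments prescribe the value of $f_1+f_2g$ in advance as an explicit function with only even-order zeros, impose finitely many Taylor-coefficient conditions at each zero of $f_2$ so that $g=(\text{target}-f_1)/f_2$ is holomorphic there, and then invoke the finite-jet interpolation theorem on the discrete zero set of $f_2$ (the paper cites Rudin, Theorem 15.15). The difference is only in parametrization: the paper writes the target as $\phi^2\e^{\gamma}$ with $\phi$ a Blaschke product on the common zeros and interpolates $\gamma$, whereas you interpolate the square root $h$ directly, spelling out the local case analysis (the hypothesis entering exactly when $1\le\operatorname{ord}_\alpha f_1<\operatorname{ord}_\alpha f_2$, which forces that order to be even) and incidentally never needing the boundedness of $f_1,f_2$.
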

\begin{proof} Here is a sketch of the proof.
Let $\{\alpha_j, j =1,2, \dots\}$ be the common zeros of $f_1$ and $f_2$. Under the hypothesis about the  orders of the  $\alpha_j$, it is easy to see that there is a Blaschke product $\phi$ whose zeros are the $\alpha_j, j =1,2, \dots$ and  there is a finite set  $I(\alpha_j)$ of interpolation conditions at each $\alpha_j$ such that every $g\in\hol\D$ which satisfies the conditions  $I(\alpha_j)$ at all $\alpha_j, j =1,2, \dots$ has the property that $f_1 + f_2 g = \phi^2 u $ for some $u\in\hol\D$ satisfying $u(\alpha_j) \neq 0$ for each $j$.

Let $\{\beta_i, i =1,2, \dots\}$ be the zeros of $f_2$ which are not zeros of $f_1$. We wish to choose $\gamma \in \hol\D$ such that 
\beq\label{defg}
g = \frac{\phi^2 \mathrm{e}^{\gamma} - f_1}{f_2}
\eeq
has the required property.
The condition that $g$ has a removable singularity at each $\beta_i$ is equivalent 
to a finite set $J(\beta_i)$ of interpolation conditions on $\gamma$ at $\beta_i$, while the condition that $g$ given by equation \eqref{defg} satisfies $I(\alpha_j)$ at each $\alpha_j$ yields a finite set of interpolation conditions on $\gamma$ at each $\alpha_j$.
Since, by \cite[Theorem 15.15]{rudin}, we may always find a $\gamma \in \hol\D$ satisfying a finite set of interpolation conditions at every point of $\{\alpha_j, j =1,2, \dots\} \cup \{\beta_i, i =1,2, \dots\}$, we obtain $g\in\hol\D$ such that $f_1 + f_2 g$ has zeros of even  multiplicity at all $\alpha_j$ and no zeros in $\D \setminus \{\alpha_j, j =1,2, \dots\}$. 
\end{proof}

\begin{proposition}\label{liftprop}
A function $h=(a,s,p)$ lifts to $\hol(\D,\mat)$ if and only if there is no point $\al\in\D$ such that, for some odd positive integer $n$,
\begin{enumerate}
\item $\al$ is a zero of $\tfrac 14 s^2-p$ of multiplicity $n$ and
\item $\al $ is a zero of $a$ of multiplicity greater than $n$.
\end{enumerate}
\end{proposition}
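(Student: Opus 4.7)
The plan is to reduce the problem to a purely one-variable question in $\hol\D$ and then invoke Lemma \ref{even-zeros}. Any $H=\bbm h_{11}&h_{12}\\ h_{21}&h_{22}\ebm\in\hol(\D,\mat)$ with $\pi\circ H=(a,s,p)$ must satisfy $h_{21}=a$, $h_{11}+h_{22}=s$ and $h_{11}h_{22}=p+h_{12}a$, so $h_{11},h_{22}$ are the roots of $X^2-sX+(p+h_{12}a)$. Writing $k=(h_{11}-h_{22})/2\in\hol\D$, the existence of a lifting is therefore equivalent to finding $k,h_{12}\in\hol\D$ with
\[
k^2 \;=\; \tfrac14 s^2-p-h_{12}a.
\]
Setting $g=-h_{12}$, the question becomes: can one choose $g\in\hol\D$ so that $\tfrac14 s^2-p+ag$ admits a holomorphic square root in $\D$, i.e.\ so that every zero of $\tfrac14 s^2-p+ag$ has even multiplicity?

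For necessity, assume $H$ is a lifting and let $\alpha\in\D$ satisfy (1) and (2). Since $a$ vanishes at $\alpha$ to order strictly greater than $n$, the product $h_{12}a$ vanishes there to order $>n$, hence $k^2=\tfrac14 s^2-p-h_{12}a$ has a zero of multiplicity exactly $n$ at $\alpha$. As $n$ is odd, $k^2$ cannot be a square in $\hol\D$, a contradiction.

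For sufficiency, I first observe that $a$ and $\tfrac14 s^2-p$ both lie in the Schur class $\schur$: for $h\in\hol(\D,\bar\ccc)$ Theorem \ref{4equiv}(4) gives $|a|\le 1$, and writing $(s,p)=(\la_1+\la_2,\la_1\la_2)$ with $\la_i\in\Delta$ yields $\tfrac14 s^2-p=\bigl(\tfrac{\la_1-\la_2}{2}\bigr)^2$, whose modulus is at most $1$. The hypothesis of the proposition is then exactly the hypothesis of Lemma \ref{even-zeros} applied to $f_1=\tfrac14 s^2-p$ and $f_2=a$, so the lemma produces $g\in\hol\D$ such that $F\df\tfrac14 s^2-p+ag$ has only zeros of even multiplicity. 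Choose $k\in\hol\D$ with $k^2=F$ and define
\[
H\;=\;\bbm \tfrac12 s+k & -g \\ a & \tfrac12 s-k \ebm;
\]
a direct check shows $\pi\circ H=(a,s,p)$. The only substantive step is the application of Lemma \ref{even-zeros}, and the main thing to get right is verifying the membership in $\schur$ (which is needed to ensure that the zero sets are Blaschke sequences so that the Blaschke product used in that lemma's proof actually exists), together with the precise matching of the ``odd-multiplicity'' conditions on $f_1,f_2$ with conditions (1) and (2).
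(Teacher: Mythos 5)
Your proposal is correct and follows essentially the same route as the paper: reduce the lifting problem to solving $\eta^2=\tfrac14 s^2-p\mp ga$ in $\hol\D$, get necessity from the parity of the zero multiplicity at $\al$, and get sufficiency from Lemma \ref{even-zeros} applied to $f_1=\tfrac14 s^2-p$ and $f_2=\pm a$. Your extra check that $a$ and $\tfrac14 s^2-p$ lie in $\schur$ (needed for the lemma as stated) is a point the paper leaves implicit, and is a welcome addition.
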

\begin{proof}
A function 
\beq\label{formH}
H= \bbm \half s-\eta& g \\ a & \half s+\eta \ebm
\eeq
is a lifting of $h=(a,s,p)\in\hol(\D,\PP)$ to $\hol(\D,\mat)$ if and only if $\eta, g \in\hol \D$ and $\det H = p$, that is,
\beq\label{condeta}
\eta^2= \tfrac 14 s^2 -p - ga.
\eeq

Suppose that $\al\in\D$ satisfies (1) and (2).  Then $\al$ is a zero of the right hand side of equation \eqref{condeta} of odd multiplicity $n$, whereas $\al$ is a zero of $\eta^2$ of even multiplicity.  This is a contradiction, and so necessity holds in Proposition \ref{liftprop}.

Conversely, suppose that there is no $\al\in\D$ such that (1) to (2) hold. 
Apply Lemma \ref{even-zeros} with $f_1= \tfrac 14 s^2-p$ and $f_2=-a$ to obtain
 $g\in\hol\D$ such that $\tfrac 14 s^2-p-ga$ has no zeros of odd multiplicity in $\D$ and hence has an analytic square root $\eta$. Then  $H$ of equation \eqref{formH} is the required lifting of $h$.
\end{proof}

There are functions $h\in\hol(\D,\bar\ccc)$ that have an analytic lifting but no Schur lifting.
\begin{example}\label{noSchurLifting} \rm  The function $h(\la)=(\half,0,\la)\in\hol(\D,\bar\ccc)$ has an analytic lifting but no Schur lifting.   More generally, let $a\in\Delta\setminus\{0\}$  and let $\ph,\psi$ be inner functions.  The function $h=(a\psi,0,\ph)\in\hol(\D,\bar\ccc)$ has an analytic lifting provided there is no point $\al\in\D$ that is a common zero of $\ph,\psi$ and has odd multiplicity $n$ for $\ph$ and multiplicity greater than $n$ for $\psi$.  However $h$ has a Schur lifting if and only if $\ph$ has an analytic square root and $\psi$ divides $\ph$ in $H^\infty$.   
\end{example}
\begin{proof}
The statement about the existence of an analytic lifting of $h$ follows from Proposition \ref{liftprop}. 

Suppose that $\ph=\ups^2$ for some inner function $\ups$ and $\psi$ divides $\ph$.  Then the function
\[
H=\bbm \ii (1-|a|^2)^\half \ups & -\bar a \ph /\psi\\ a\psi &-\ii (1-|a|^2)^\half \ups \ebm
\]
is a Schur lifting of $h$.

Conversely, suppose that $h$ has a Schur lifting $H$.   Necessarily $H$ has the form
\[
H=\bbm \eta& -(\eta^2+\ph)/(a\psi) \\ a\psi & -\eta \ebm
\]
for some $\eta$ in the Schur class $\schur$.  Since $\det (1-H^*H)\geq 0$ on $\Delta$,
\[
1-|a\psi|^2-2|\eta|^2 -\frac{|\eta^2+\ph|^2}{|a\psi|^2} + |\ph|^2\geq 0.
\]
Let $f=\eta^2\in\schur$.  Since $|f-\ph|\geq \left| |f|-|\ph|\right|$  and $\ph,\psi$ are inner, we have, a.e. on $\T$,
\[
2-|a|^2-2|f| -\frac{(|f|-1)^2}{|a|^2} \geq 0.
\]
This inequality simplifies to 
\[
0\geq (|f|+|a|^2-1)^2.
\]
It follows that $|f|=1-|a|^2$ a.e. on $\T$, and moreover all the inequalities in the sequence above are actually equalities.  In particular, $|f-\ph|^2= (|f|-|\ph|)^2$ and so
\[
\re(\bar\ph f) =-|f|= -(1-|a|^2) \quad\mbox{ a.e. on } \T
\]
and consequently
\[
-\bar\ph f= |f|=1-|a|^2 \quad \mbox{ a.e. on } \T.
\]
Thus 
\[
\eta^2=f=-(1-|a|^2)\ph
\]
 and so $\ph$ has an analytic square root.  Moreover $\eta^2+\ph=|a|^2\ph$, and so 
\[
-\bar a \ph/\psi = H_{12} \in \schur.
\]
Thus $\psi$ divides $\ph$.
\end{proof}
The upshot of Proposition \ref{liftprop} and the three examples is that the $\mu$-synthesis problem  for $\mu_E$ and the interpolation problem for $\hol(\D,\bar\ccc)$ are quite closely related, but that the rich function theory of $\hol(\D,\bar\B)$ may not be helpful for their solution.

\section{Conclusions} \label{conclud}

The genesis of this paper was an attempt to find a new case of the notoriously difficult $\mu$-synthesis problem that is amenable to analysis.  The $\mu$-synthesis problem arises in $H^\infty$ control theory, for example, in the problem of designing a robustly stabilising controller for plants which are subject to structured uncertainty \cite{Do,DuPa}.  Here $\mu$ denotes a cost function on the space of $m\times n$ complex matrices; as in Section \ref{instance}, it is given by 
\beq\label{defmu2}
\frac{1}{\mu_E(A)} = \inf \{\|X\|: X\in E \mbox{ and } \det (1-AX)=0\}
\eeq
where $E$ is a linear space of matrices of appropriate size.
 Previous attempts to find analysable instances of $\mu$-synthesis have led to the study of two domains in $\C^2$ and $\C^3$, the symmetrised bidisc $\G$ of Section \ref{Gamma} and the {\em tetrablock} (see for example \cite{awy,Y10}).  These domains have turned out to have interesting function-theoretic \cite{AY01,MRZ,PS}, operator-theoretic \cite{AY99,AYmodel,BPSR,Bhat,Sarkar} and geometric properties \cite{costara,AY04,EZ,JP,Z13}.  Could there be a class of `$\mu$-related domains' which have similarly rich theories, and which would throw light on the $\mu$-synthesis problem?  In this paper we study the next natural case of $\mu$, which results from taking the space $E$ in equation \eqref{defmu2} to be the space of $2\times 2$ matrices spanned by the identity matrix and a Jordan cell.  This choice leads to the pentablock $\ccc$.   As we have shown, $\ccc$ is indeed amenable to analysis, though there remain some fundamental questions about $\ccc$.  We list some of them below.

The $\mu$-synthesis problem is an interpolation problem for the space $\hol(\D,\Omega)$ for certain domains $\Omega\subset \C^d$.  One is given distinct points $\la_1,\dots,\la_N\in\D$ and target points $w_1,\dots,w_N\in\Omega$ and the task is to determine whether there exists $F\in\hol(\D,\Omega)$ such that $F(\la_j)=w_j$ for $j=1,\dots, N$, and if so to find such an $F$ (actually the interpolation conditions in \cite{Do,DuPa} are of a more general form).  In the case that $N=2$ this problem is central to hyperbolic geometry in the sense of Kobayashi \cite{Ko98}, so one could describe the problem as belonging to hyper-hyperbolic geometry.   In $\mu$-synthesis the domain $\Omega$ has the form
\[
\Omega_\mu=\{A\in \C^{m\times n}: \mu(A)<1\}.
\]
  This is typically an unbounded nonconvex and hitherto unstudied domain, and so the construction of holomorphic maps from $\D$ to $\Omega_\mu$ is a challenge.  In the cases that $\mu$ is the spectral radius and $\mu_{\mathrm {diag}}$ there is an effective technique of dimension-reduction.  

Let us say that the {\em polynomial rank} of a domain $\Omega\subset \C^d$ is the smallest positive integer $r$ such that there exists a polynomial map $\pi:\C^d\to\C^r$ and a domain $\Omega'\subset\C^r$ such that $z\in\C^d$ belongs to $\Omega$ if and only if $\pi(z)\in \Omega'$.  More succinctly, $\pi$ must satisfy $\Omega=\pi\inv(\pi(\Omega))$.  Clearly $r\leq d$, since we may choose $\pi$ to be the identity map on $\C^d$. In contrast, in all the special cases of $\mu$ mentioned in this paper it turns out that the polynomial rank of $\Omega_\mu$ is {\em less than} the dimension of the domain.  In particular, Corollary \ref{dependsonly} shows that the polynomial rank of $\Omega_{\mu_E}$ is at most $3$.
The idea is that, when the polynomial rank of $\Omega$ is less than its dimension, the geometry of the lower-dimensional domain may be more accessible than that of $\Omega$ itself.  A strategy for the construction of interpolating functions from $\D$ to $\Omega$ is to find a map $h\in\hol(\D,\pi(\Omega))$ which satisfies $h(\la_j)=\pi(w_j)$ for each $j$, and then to attempt to lift $h$ modulo $\pi$ to an interpolating function in $\hol(\D,\Omega)$.

When $\Omega=\Omega_\mu$ for some $\mu$ the problem has a further helpful feature: since $\mu_E$ is no greater than the operator norm, for any subspace $E$, it is always the case that $\Omega_\mu$ contains the open unit ball of the ambient space of matrices.  In all three of the special cases of interest it turns out that the images of $\Omega_\mu$ and the unit ball $\B$ under the dimension-reducing map $\pi$  {\em coincide}.   Now the geometry and function theory of the Cartan domain $\B$ is rich and long established, and there are numerous ways of constructing maps in $\hol(\D,\B)$; for example one may use the homogeneity of $\B$ to construct an interpolating function $H$ by the standard process of Schur reduction.    Then $\pi\circ H$ is a holomorphic function from $\D$ to $\pi(\B)$ satisfying interpolation conditions, and one may then try to find an analytic lifting of $\pi\circ H$ to an element of $\hol(\D,\Omega_\mu)$ that satisfies the given interpolation conditions.  This strategy has had some successes, admittedly modest, for the two special cases of $\mu$ mentioned above.

In this new case of $\mu$ the strategy again looks promising. The dimension-reducing map $\pi$ here takes $A\in\mat$ to $(a_{21}, \tr A,\det A)$, and Theorem \ref{4equiv} shows that $\pi\inv(\pi(\B))=\B_\mu$.  Here $\pi(\B)$ is the pentablock and we write 
$\B_\mu$ rather that $\Omega_\mu$.    The strategy outlined above is in principle feasible.  However, Sections \ref{schwarz} and  \ref{lifting} show that the final step, the lifting of maps from $\hol(\D,\ccc)$ to $\hol(\D,\B_\mu)$ is more subtle than in previous cases.  

  We end with two natural questions.

Do the Carath\'eodory distance and Lempert functions coincide on the pentablock?  See \cite{EdKoZw10} for a positive solution of the corresponding question for the tetrablock.

What are the magic functions of the pentablock?  See \cite{AY08} for the definition of magic function and for their use in determining the automorphisms of a domain.

In the original version of this paper at arXiv:1403.1960 we also asked whether the pentablock is an analytic retract of $\B$. 
It has now been shown \cite{kos} that the answer is negative, as in the corresponding  question for the tetrablock \cite{Y08}. It follows that the pentablock is inhomogeneous.

\bibliographystyle{plain}

JIM  ~ AGLER, Department of Mathematics, University of California at San Diego, CA \textup{92103}, USA\\

ZINAIDA A. LYKOVA,
School of Mathematics and Statistics, Newcastle University, Newcastle upon Tyne
 NE\textup{1} \textup{7}RU, U.K.~~\\

N. J. YOUNG, School of Mathematics and Statistics, Newcastle University, Newcastle upon Tyne NE1 7RU, U.K.
 {\em and} School of Mathematics, Leeds University,  Leeds LS2 9JT, U.K.
\end{document}